		\definecolor{fancyblue}{HTML}{0000bb}
		\definecolor{fancyred}{HTML}{c40000}
		\Crefname{equation}{equation}{equations}
		\Crefname{page}{page}{pages}
	\newtheorem{theorem}{Theorem}[section]
	\newtheorem{lemma}[theorem]{Lemma}
	\newtheorem{proposition}[theorem]{Proposition}
	\newtheorem{corollary}[theorem]{Corollary}
	\newtheorem*{theorem*}{Theorem}
	\newtheorem*{lemma*}{Lemma}
	\newtheorem*{proposition*}{Proposition}
	\newtheorem*{corollary*}{Corollary}
	\theoremstyle{definition}
	\newtheorem{definition}[theorem]{Definition}
	\newtheorem*{definition*}{Definition}
	\newtheorem{example}[theorem]{Example}
	\newtheorem*{example*}{Example}
	\theoremstyle{remark}
	\newtheorem{remark}[theorem]{Remark}
	\newtheorem*{remark*}{Remark}
\numberwithin{equation}{section}
\providecommand{\TT}{\mathbb{T}} 
\providecommand{\ET}{\mathrm{E}\TT}
\providecommand{\BT}{\mathrm{B}\TT}
\providecommand{\Bor}[1]{{#1}_{\mathrm{h}\TT}}
\providecommand{\Borr}[1]{{\widetilde{#1}}_{\mathrm{h}\TT}}
\DeclareMathOperator{\End}{End}
\providecommand{\tH}{\tilde{H}} 
\providecommand{\BH}{\tilde{H}^{\TT}} 
\providecommand{\BC}{\tilde{H}_{\TT}} 
\providecommand{\tBH}{t\tilde{H}^{\TT}} 
\providecommand{\tBC}{t\tilde{H}_{\TT}} 
\providecommand{\cBH}{c\tilde{H}^{\TT}} 
\providecommand{\cBC}{c\tilde{H}_{\TT}} 
\DeclareMathOperator{\colim}{colim}
\DeclareMathOperator{\Tor}{Tor}
\DeclareMathOperator{\Hom}{Hom}
\DeclareMathOperator{\im}{im}
\providecommand{\SWF}{\mathrm{SWF}}
\providecommand{\spinc}{spin$^c$\xspace}
	\DeclareFontFamily{U}{mathx}{\hyphenchar\font45}
	\DeclareFontShape{U}{mathx}{m}{n}{
	      <5> <6> <7> <8> <9> <10>
	      <10.95> <12> <14.4> <17.28> <20.74> <24.88>
	      mathx10
	      }{}
	\DeclareSymbolFont{mathx}{U}{mathx}{m}{n}
	\DeclareMathAccent{\widecheck}{0}{mathx}{"71}
\providecommand{\HM}{\mathrm{HM}}
\providecommand{\HMfrom}{\widehat{\HM}}
\providecommand{\HMbar}{\overline{\HM}}
\providecommand{\HMto}{\widecheck{\HM}}
\providecommand{\N}{\mathbb{N}}
\providecommand{\Z}{\mathbb{Z}}
\providecommand{\Q}{\mathbb{Q}}
\providecommand{\R}{\mathbb{R}}
\providecommand{\C}{\mathbb{C}}
\providecommand{\F}{\mathbb{F}} 
\providecommand{\kk}{\Bbbk} 
\newcommand{\homeo}{\approx}
\newcommand{\homequ}{\simeq}
\providecommand{\CP}{\mathbb{C}\mathrm{P}}
\providecommand{\CPinf}{\CP^\infty}
\providecommand{\mr}[1]{\mathrm{#1}}
\providecommand{\mc}[1]{\mathcal{#1}}
\providecommand{\mf}[1]{\mathfrak{#1}}
\providecommand{\ra}{\rightarrow}
\providecommand{\lra}{\longrightarrow}
\providecommand{\hra}{\hookrightarrow}
\providecommand{\lrao}[1]{\overset{#1}{\lra}}
\providecommand{\xra}[1]{\xrightarrow{#1}}
\providecommand{\Set}[2]{ \left\{ #1 \,\middle|\, #2 \right\} }
\providecommand{\scp}[1]{\langle #1 \rangle}
\providecommand{\inv}{^{-1}}
\DeclareMathOperator{\ind}{ind}
\DeclareMathOperator{\id}{id}
\providecommand{\eqand}{\quad\text{and}\quad}
\providecommand{\AND}{\quad\text{and}\quad}
\providecommand{\wrt}{with respect to\xspace}
\providecommand{\Froyshov}{Fr\o{}yshov\xspace}
\providecommand{\ETwt}{\ET_+\wedge_\TT}
\providecommand{\Hk}{\mr H\kk}
	\title{The monopole $h$--invariants\\ from a topological perspective}
	\author{Stefan Behrens%
		\thanks{Bielefeld University,
		Faculty of Mathematics,
		Universitätsstr.~25, 
		D--33615 Bielefeld, 
		Germany; 
		\textit{Email address:} \texttt{sbehrens@math.uni-bielefeld.de}
		}
	}
\date{}
\begin{document}

	\maketitle

\begin{abstract}
\noindent
We study the monopole $h$--invariants of 3--manifolds from a topological perspective based on Lidman and Manolescu's description of monopole Floer homology in terms of Seiberg--Witten--Floer homotopy types.
We investigate the possible dependence on the choice of coefficients and give proofs of several properties of the $h$--invariants which are well known to experts, but hard to track down in the literature.
\end{abstract}

\section{Introduction}

This article lives in two worlds.
The methods come from equivariant algebraic topology, but the motivation comes from Seiberg--Witten theory on $3$--manifolds.
The two worlds are connected by the work of Manolescu~\cites{Manolescu_SWF_spectra_2003,Manolescu_gluing_theorem_2007,Manolescu_spin_with_boundary_2014}.
We explore the monopole $h$--invariants, certain rational numbers defined using Kronheimer and Mrowka's monopole Floer homology~\cite{KronheimerMrowka_book_2007}, from the topological perspective using the results of Lidman and Manolescu~\cite{LidmanManolescu_equivalance_2018}.
Along the way, we prove several properties of the $h$--invariants which are well known to experts, but hard to track down in the literature.
We tried to keep the exposition accessible to readers from both worlds which do not necessarily live in the border region.

Throughout, we use the following notation:
$\TT$ is the circle group, 
$\kk$ is a principal ideal domain, 
$\F$ is a field, and 
$\F_p$ is a prime field%
of characteristic~$p$ which may be zero or a prime number.
Concretely, we may identify~$\F_0=\Q$ and~$\F_p=\Z/p\Z$ for~$p>0$.
The letter~$X$ will be reserved for~$\TT$--spaces.
Similarly, we reserve~$Y$ for 3--manifolds that are closed, connected, and oriented by default.

\paragraph{Motivation from Seiberg--Witten theory.}

Let~$Y$ be a closed, connected, oriented 3--manifold, implicitly equipped with a Riemannian metric and a \spinc structure.
Monopole Floer homology as defined by Kronheimer and Mrowka~\cite{KronheimerMrowka_book_2007} assigns to this data a long exact sequence 
	\begin{equation*}\begin{tikzcd}[column sep=small]
	\cdots \rar&
		\HMfrom_{*+1}(Y;\kk) \rar{p_*^\kk} & 
			\HMbar_{*}(Y;\kk) \rar{i_*^\kk} & 
				\HMto_{*}(Y;\kk) \rar{j_*^\kk} & 
					\HMfrom_{*}(Y;\kk) \rar &
						\cdots
	\end{tikzcd}\end{equation*}
of $\kk[u]$--modules which are relatively $\Z$--graded as $\kk$--modules such that the action of~$u$ has degree~$-2$.
Further, the $u$--action is invertible on~$\HMbar_*(Y;\kk)$.

If~$b_1(Y)=0$, then the relative grading lifts to an absolute grading with values in a subset of~$\Q$ of the form~$\mf q(Y)=\Z-2n(Y)$ where~$n(Y)\in\Q$ is a spectral invariant of a \spinc Dirac operator that arises in the construction.
Moreover, it is known that~$\HMbar_{*}(Y;\kk)$ is supported in degrees~$\mf q^\mathrm{ev}(Y)=2\Z-2n(Y)$ in which it is isomorphic to~$\kk$.
The following definition, which is a variation of~\cite{KronheimerMrowkaOzsvathSzabo_lens_space_surgeries_2007}*{Def.~2.11}, turns out to be sensible:
\begin{definition}\label{D:monopole h-invariants homology intro}
Let~$Y$ be a \spinc 3--manifold with~$b_1(Y)=0$.
For every principal ideal domain~$\kk$ we define the \emph{weak} and \emph{strong} \emph{monopole $h$--invariants}
	\begin{align*}
	h_w(Y;\kk) &= \tfrac12\max\Set{q\in q^\mathrm{ev}(Y)}{p_{k-1}^\kk\ne0} \\
	h_s(Y;\kk) &= \tfrac12\max\Set{q\in q^\mathrm{ev}(Y)}{\text{$p_{k-1}^\kk$ is surjective}}.
	\end{align*}
If~$\F$ is a field, we write~$h_\F(Y)=h_w(Y;\kk)=h_s(Y;\kk)$ and further abbreviate to~$h_p(Y)=h_{\F_p}(Y)$ for the prime fields~$\F_p$.
\end{definition}
The literature about Floer theory on 3--manifolds contains several similar and closely related definitions; we give an overview in \cref{ch:precursors}.
The significance of the invariants~$h_p(Y)$ comes from the following results, which are common folklore, but hard to track down in the literature.
In \cref{ch:additivity monopoles,ch:Froyshov} we offer new proofs based on the Seiberg--Witten--Floer homotopy types defined by Manolescu in~\cite{Manolescu_SWF_spectra_2003}.

\begin{theorem}\label{T:Froyshov intro}
We have $h_{s/w}(S^3;\kk)=0$ and if $Y$~is the boundary of a compact \spinc 4--manifold~$W$ with negative definite intersection form, then
	\begin{equation}
	\frac18\big(c_1^2(W)+b_2(W)\big) \le h_{s/w}(Y;\kk)
	\hspace{2em}\text{(``\Froyshov inequality'')}
	\end{equation}
where~$c_1(W)$ is considered as element of~$H^2(X,Y;\Q)\cong H^2(X;\Q)$.
\end{theorem}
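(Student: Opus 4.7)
The plan is to work in equivariant stable homotopy theory via Manolescu's Seiberg--Witten--Floer (SWF) homotopy types $\SWF(Y)$ together with the Lidman--Manolescu equivalence, and to exploit the relative Bauer--Furuta invariant of the cobordism $W$.

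For the $S^3$ statement, I would first identify $\SWF(S^3) \homequ S^0$ (with $n(S^3)=0$, so no grading shift arises). The reduced Borel and Tate cohomologies are the standard $\BH(S^0) \cong \kk[u]$ concentrated in non-negative even degrees and $\tBH(S^0) \cong \kk[u,u^{-1}]$, with the natural transformation between them being the evident inclusion. Under the Lidman--Manolescu isomorphism this computes $p_*^\kk$ for $S^3$ and one reads off $h_s(S^3;\kk)=h_w(S^3;\kk)=0$ directly.

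For the \Froyshov inequality, remove an open ball from $W$ and view the result as a cobordism $\wt W \colon S^3 \to Y$. The relative Bauer--Furuta construction yields a $\TT$-equivariant stable map
\[ \phi_{\wt W} \colon \Sigma^{V} \SWF(S^3) \lra \SWF(Y), \]
for a virtual $\TT$-representation $V$ whose real dimension arises from the index of the \spinc Dirac operator on $W$ and from the finite-dimensional approximations used in the construction. Since $W$ is negative definite, $\sigma(W)=-b_2(W)$, and a standard index calculation shows that the net shift on absolute gradings equals $\tfrac14\bigl(c_1^2(W)+b_2(W)\bigr)$. Applying $\BH$ and $\tBH$ and invoking naturality of the Borel--to--Tate comparison map, one obtains a commutative square linking $p_*^\kk$ for $S^3$ to $p_*^\kk$ for $Y$. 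Tracing the generator of $\BH^0(S^0)$, which lies in the image of $p$ in degree $0$, through $\phi_{\wt W}^*$ then produces a surjectivity witness for $p_*^\kk$ on $Y$ at the correspondingly shifted degree, establishing $h_s(Y;\kk) \ge \tfrac18\bigl(c_1^2(W)+b_2(W)\bigr)$ and hence the same inequality for $h_w(Y;\kk)$ since $h_s \le h_w$.

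The main obstacle will be carefully tracking the grading shifts: the correct value combines the index of the \spinc Dirac operator on $W$ (a Chern--Weil quantity involving $c_1^2$ and $\sigma$) with the rational correction $n(Y)$ used to pin down the absolute grading on $\HMto$, and one must check that these conspire to produce exactly $\tfrac18\bigl(c_1^2(W)+b_2(W)\bigr)$ when $W$ is negative definite. A secondary subtlety is the passage from a field to an arbitrary PID $\kk$, which I expect to follow from a universal coefficients argument applied to the SWF spectrum once the field case has been settled.
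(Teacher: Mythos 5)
Your overall strategy --- use the relative Bauer--Furuta map of the cobordism obtained by removing a ball from $W$, combine it with the Lidman--Manolescu identification, and run a naturality argument through the localization sequence --- is the same route the paper follows. The paper packages the ``trace the generator through the commutative square'' step as a standalone monotonicity statement (\cref{T:monotonicity}) for $\TT$--maps between type SWF spaces whose fixed-point map has degree $\pm1$, and then feeds in the degree bookkeeping of the Bauer--Furuta map exactly as you describe. Two points where your sketch needs tightening. First, your final paragraph proposes a universal coefficient argument to pass from a field to an arbitrary PID, but this detour is unnecessary: the monotonicity argument is a direct diagram chase in Borel cohomology which is valid for every coefficient ring $\kk$, so the PID case does not reduce to the field case at all. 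Second, the theorem asserts the bound for the \emph{homological} invariants $h_{s/w}(Y;\kk)$, and the map realizing $p^{\kk}_*$ under Lidman--Manolescu is the coBorel-to-Tate map $\lambda_*\colon C_*X\to T_*X$, not a Borel-to-Tate map as you write; the Bauer--Furuta map naturally controls the cohomological $\lambda^*\colon \BC^*\to \tBC^*$, and the paper obtains the homological statement by a separate Spanier--Whitehead duality argument (applying the construction to the dual map $f_W^*$), a step your sketch glosses over and which you would need to supply to land on $h_{s/w}$ rather than $h^{s/w}$.
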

\begin{proof}
A slightly more general version will be proved in \cref{T:Froyshov main}.
\end{proof}
As is well known, \cref{T:Froyshov intro} imposes restrictions on the possible definite intersection forms on smooth 4--manifolds with boundary which generalize Donaldson's celebrated diagonalizability theorem (cf.~\cite{KronheimerMrowka_book_2007}*{Thm.~39.1.4}, \cite{BehrensGolla_twisted_2018}*{Ch.~5}).
The invariants~$h_p$ defined using prime fields enjoy some additional properties that facilitate computations.
\begin{theorem}\label{T:additivity etc intro}
Let~$Y,Y'$ be \spinc 3--manifolds with~$b_1(Y)=0$.
The following hold for~$p$ equal to zero or a prime number:
\begin{enumerate}[(i)]
\item
$h_p(Y\# Y') = h_p(Y)+h_p(Y')$ \hfill{(``Additivity'')}
\item
$h_p(-Y) = -h_p(Y)$ \hfill{(``Duality'')}
\item
$h_p(Y)+\tfrac18\big(c_1^2(W)+b_2(W)\big)\le h_p(Y')$ where~$W$ is a \spinc cobordism from~$Y$ to~$Y'$ with negative definite intersection form. \hfill{(``Monotonicity'')}
\end{enumerate}
\end{theorem}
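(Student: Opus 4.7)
The plan is to carry out all three arguments at the level of Seiberg--Witten--Floer homotopy types, exploiting the fact that prime-field coefficients eliminate $\Tor$ terms and yield clean K\"unneth and duality statements. Over a field the condition ``$p_{q-1}$ is surjective'' coincides with ``$p_{q-1}\ne 0$'' because $\HMbar_{q-1}$ is one-dimensional in the relevant degrees, which is why $h_w=h_s$ for $\kk=\F_p$ and the proofs need not distinguish the two.

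For additivity (i), I would invoke Manolescu's connected sum formula~\cite{Manolescu_gluing_theorem_2007}, transported to monopole Floer homology via~\cite{LidmanManolescu_equivalance_2018}, which identifies $\SWF(Y\# Y')$ with the smash product $\SWF(Y)\wedge\SWF(Y')$ up to a $\TT$--equivariant suspension, the normalizing invariant $n(Y)$ being additive in this process. Passing to Borel-equivariant cohomology with $\F_p$ coefficients, K\"unneth identifies both $\BH^*$ and $\tBH^*$ of $\SWF(Y\# Y')$ with the tensor products of the corresponding invariants of $Y$ and $Y'$, and the map $p_*$ splits as the tensor product of the individual maps. Over a field the lowest degree in which such a tensor product becomes non-zero is the sum of the lowest non-zero degrees of the two factors, which yields $h_p(Y\# Y')=h_p(Y)+h_p(Y')$ once the rational shifts are accounted for.

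For duality (ii), the spectrum $\SWF(-Y)$ is the $\TT$--equivariant Spanier--Whitehead dual of $\SWF(Y)$ up to a suitable suspension~\cite{Manolescu_SWF_spectra_2003}. With field coefficients, equivariant duality interchanges Borel cohomology and Borel homology with reversal of gradings, and the Tate version is essentially self-dual. Under this pairing the map $p_*$ corresponds to its counterpart for $-Y$, so the bottom degree in which $p_*$ is non-zero becomes the negative of the corresponding top degree for $-Y$, flipping the sign of $h_p$.

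For monotonicity (iii), a \spinc cobordism $W\colon Y\to Y'$ induces a map of $\TT$--equivariant SWF spectra that shifts the rational grading by $\tfrac14\bigl(c_1^2(W)+b_2(W)\bigr)$; this is the same input that drives the proof of \cref{T:Froyshov intro}. When $W$ is negative definite, this induced map is an isomorphism on Tate cohomology $\tBH^*$ (analogous to the definite-form analysis used for the \Froyshov inequality). Comparing the bottom degrees of non-triviality of the respective $p_*$ maps and tracking the shift yields the stated inequality. The main obstacle, common to all three parts, is the precise bookkeeping of rational grading shifts: ensuring that the suspensions appearing in the connected sum, duality, and cobordism identifications at the SWF level interact correctly with the normalizing invariant $n(Y)$ and with the grading conventions used to define $h_p$.
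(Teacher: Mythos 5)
Your broad strategy is sound, but the first sentence contains a real error that propagates into part~(i): prime-field coefficients do \emph{not} eliminate the Tor term in the relevant K\"unneth sequence. The K\"unneth for $\BC^*(X_1\wedge X_2;\F)$ is a sequence of $\F[u]$--modules, and $\F[u]$ is a PID, not a field, so
\begin{equation*}
0 \to \BC^*(X_1;\F)\otimes_{\F[u]}\BC^*(X_2;\F) \to \BC^*(X_1\wedge X_2;\F) \to \Tor_{\F[u]}\bigl(\BC^*(X_1;\F),\BC^*(X_2;\F)\bigr) \to 0
\end{equation*}
generally has a non-zero Tor term whenever both factors have $u$--torsion. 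Your claim that $p_*$ (equivalently $\lambda^*$, via Kronecker duality) ``splits as the tensor product of the individual maps'' therefore does not follow as stated. What saves the day is that $\Tor_{\F[u]}$ of two finitely generated modules is $u$--torsion, hence is killed by the localization $\lambda^*\colon \BC^*\to\tBC^*\cong u^{-1}\BC^*$; only the free parts (the ``towers'') contribute, and their product sits in the expected degree. You must say this explicitly, otherwise the degree count could a priori be disturbed by the Tor term. The paper is aware of exactly this subtlety: the K\"unneth approach is mentioned in Remark~\ref{R:Kunneth approach} as an alternative, with the explicit caveat that ``one has to control the $\Tor$--term instead''; the proof actually given (\cref{T:additivity top}) sidesteps it entirely by proving sub-additivity via cup products and then obtaining super-additivity from Spanier--Whitehead and Kronecker duality.

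For part~(ii) your outline matches the paper's route through \cref{T:duality SWF type,T:duality top,T:duality}, though you should be careful that two distinct dualities enter: Spanier--Whitehead duality, which relates $\SWF(Y)$ to $\SWF(-Y)$, and Kronecker duality, which relates the homological and cohomological $h$--invariants of the \emph{same} space; both are needed to close the argument for fields. For part~(iii), your plan of invoking a Bauer--Furuta map for the full cobordism $W\colon Y\to Y'$ is plausible but requires constructing and analysing that cobordism map directly (in particular establishing that its fixed-point map is a homotopy equivalence in the negative definite case). The paper instead (\cref{T:monotonicity mon}) treats $W$ as a filling of $-Y\amalg Y'$, caps off to a filling of $(-Y)\#Y'$, applies the Fr\o{}yshov inequality there, and then uses additivity and duality to decompose $h_p((-Y)\#Y')$; this is more economical because it only requires the filling version of the Bauer--Furuta map, which is what the \Froyshov argument already provides.
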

\begin{proof}
The three properties will be proved individually in \cref{T:duality,T:additivity,T:monotonicity mon}.
\end{proof}

A priori, the invariants~$h_{s/w}(Y;\kk)$ have no reason to be independent of the choice of~$\kk$ and the obvious inequality~$h_w(Y;\kk)\ge h_s(Y;\kk)$ has no reason not to be strict.
However, as far as we know, no examples are known whose $h$--invariants explicitly depend on the choices.
This issue has been raised before, for example in~\cite{Manolescu_triangulaion_JAMS_2016}*{Rem.~3.12}, and it is pointed out in~\cite{Froyshov_monopole_homology_2010}*{p.~569} that a possible discrepancy~$h_p(Y)\ne h_0(Y)$ is related to the $\Z$--torsion in~$\HMto_*(Y;\Z)$.
One of our main objectives is to improve the understanding of the choice of coefficients and the potential difference in the weak and strong flavors.
To that end, we will prove the following result which emphasizes the role of~$\F_p$ and~$\Z$.

\begin{theorem}\label{T:coeff dep intro}
Let~$Y$ be a \spinc 3--manifold with~$b_1(Y)=0$.
\begin{enumerate}[(i)]
\item
If~$\kk$ be a principal ideal domain of characteristic\footnote{The \emph{characteristic} of a principal ideal domain~$\kk$ is that of its fields of fractions.}~$p$, then
	\begin{equation*}
	h_w(Y;\kk)=h_p(Y) \AND 
	h_s(Y;\kk)\ge h_s(Y;\Z).
	\end{equation*}
\item
The monopole $h$--invariants with integer coefficients are given by
	\begin{equation*}
	h_w(Y;\Z) = h_0(Y) \AND 
	h_s(Y;\Z) = \min_p h_p(Y).
	\end{equation*}
\end{enumerate}
\end{theorem}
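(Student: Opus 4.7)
The plan is to treat the four assertions in turn. For the first equation of~(i), $h_w(Y;\kk)=h_p(Y)$, I would use faithfully flat base change. Since $\HMbar_{q-1}(Y;\kk)\cong\kk$ is torsion-free, a $\kk$--linear map into it is nonzero if and only if its scalar extension along the (faithfully flat) inclusion $\kk\hookrightarrow F=\mathrm{Frac}(\kk)$ is nonzero, and compatibility of monopole Floer homology with flat base change (via Lidman--Manolescu's SWF model) then gives $h_w(Y;\kk)=h_w(Y;F)$. Since $F$ is a faithfully flat extension of its prime subfield $\F_p\subseteq F$, the same reasoning yields $h_w(Y;F)=h_w(Y;\F_p)$; and because a nonzero linear map into a $1$--dimensional vector space is automatically surjective, $h_w=h_s$ over a field, so the right-hand side equals $h_p(Y)$. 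For the inequality $h_s(Y;\kk)\geq h_s(Y;\Z)$, I would invoke the universal coefficient theorem, which produces a natural map $\HMfrom(Y;\Z)\otimes\kk\to\HMfrom(Y;\kk)$ and an isomorphism $\HMbar(Y;\Z)\otimes\kk\cong\HMbar(Y;\kk)$ (no $\Tor$ contribution, as $\HMbar(Y;\Z)\cong\Z$ is free). Naturality yields a commutative square, and surjectivity of $p^\Z_{q-1}$ onto $\Z$ then forces surjectivity of $p^\kk_{q-1}$ onto $\kk$.

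The first equation of~(ii), $h_w(Y;\Z)=h_0(Y)$, is the $\kk=\Z$ case of the previous step. The inequality $\min_p h_p(Y)\geq h_s(Y;\Z)$ of the second equation is likewise immediate from~(i), so the content is the reverse. To set it up, I would rewrite surjectivity of $p^\kk_{q-1}$ using the exact triangle: $p^\kk_{q-1}$ is surjective iff $i^\kk_{q-1}(g^\kk_{q-1})=0$, where $g^\kk_{q-1}$ generates $\HMbar_{q-1}(Y;\kk)\cong\kk$. Setting $\alpha_{q-1}:=i^\Z_{q-1}(g^\Z_{q-1})\in\HMto_{q-1}(Y;\Z)$, the universal-coefficient analysis applied to the chain-level map underlying $i$ identifies $i^{\F_p}_{q-1}(g^{\F_p}_{q-1})$ with the image of $\alpha_{q-1}\otimes 1$ in $\HMto_{q-1}(Y;\Z)\otimes\F_p\subseteq\HMto_{q-1}(Y;\F_p)$, which vanishes iff $\alpha_{q-1}\in p\HMto_{q-1}(Y;\Z)$. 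Consequently
\[
	h_p(Y)=\tfrac12\max\Set{q\in\mathfrak{q}^{\mathrm{ev}}(Y)}{\alpha_{q-1}\in p\HMto_{q-1}(Y;\Z)}
	\AND
	h_s(Y;\Z)=\tfrac12\max\Set{q\in\mathfrak{q}^{\mathrm{ev}}(Y)}{\alpha_{q-1}=0},
\]
and the missing inequality reduces to exhibiting a single prime $p$ such that $\alpha_{q-1}\notin p\HMto_{q-1}(Y;\Z)$ for every $q\in\mathfrak{q}^{\mathrm{ev}}(Y)$ at which $\alpha_{q-1}\neq 0$.

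Producing such a $p$ is the main obstacle. The structural inputs are that the system $\{\alpha_{q-1}\}$ is $u$--coherent (since $i$ is $\Z[u]$--linear, $u\cdot\alpha_{q-1}=\alpha_{q-3}$) and that each $\HMto_{q-1}(Y;\Z)$ is finitely generated, so that $\alpha_{q-1}\neq 0$ can happen only in finitely many degrees strictly above $2h_s(Y;\Z)$ before one enters the range where $\alpha_{q-1}$ has infinite order. The plan is to analyze the torsion structure of the nonzero $\alpha_{q-1}$'s degree by degree: for $q$ at which $\alpha_{q-1}$ has nonzero free part the set $\Set{p}{\alpha_{q-1}\in p\HMto_{q-1}(Y;\Z)}$ is finite and depends only on the content, while for $q$ at which $\alpha_{q-1}$ is nonzero torsion the bad primes are controlled by the (finite) torsion of $\HMto_{q-1}(Y;\Z)$; combining these finite data using $u$--coherence produces a prime $p$ lying in every ``good'' set, or else reduces to the $p=0$ case when all nonzero $\alpha_{q-1}$ in the range $q>2h_s(Y;\Z)$ have infinite order. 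The hardest step is making this choice uniform across the relevant finitely many degrees, and this relies on the spectrum-level description of the map $i$ provided by the Lidman--Manolescu equivalence, which pins down the image of the $\HMbar$--generator inside $\HMto(Y;\Z)$.
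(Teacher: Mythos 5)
Your treatment of the routine parts is sound and runs parallel to the paper's, differing mainly in where the universal coefficient theorem is applied.  The paper (Proposition~\ref{T:basic inequalities h-invariant}) applies the UCT to the Borel cohomology $\BC^*(X;\kk)$ of an SWF~representative and then translates back to $Y$ via Corollary~\ref{T:comparing h-invariants cohomology} and Lemma~\ref{T:monopole h-inv hom vs coh}, whereas you apply it directly to the Floer groups.  Those arguments are interchangeable, and your flat base-change argument for $h_w(Y;\kk)=h_p(Y)$, the naturality argument for $h_s(Y;\kk)\ge h_s(Y;\Z)$, and the deduction of the easy inequality $\min_p h_p(Y)\ge h_s(Y;\Z)$ are all essentially what the paper does (after translation).

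Where your proposal genuinely falls short is the hard inequality $h_s(Y;\Z)\ge\min_p h_p(Y)$, and it falls short in two ways. First, you do not make the crucial observation that the $u$--coherence you mention collapses the problem to a \emph{single} degree: since $i_*$ is $\Z[u]$--linear and $u$ is invertible on $\HMbar$, one has $u\cdot\alpha_{q-1}=\alpha_{q-3}$, so the set $\{q:\alpha_{q-1}\in p\HMto_{q-1}\}$ is downward--closed.  Hence $h_p(Y)\le h_s(Y;\Z)$ for a given $p$ is equivalent to $\alpha_{\nu+1}\notin p\cdot\HMto_{\nu+1}(Y;\Z)$ at the one degree $\nu+1=2h_s(Y;\Z)+1$, and no uniformization ``across finitely many degrees'' is needed.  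Second, and more seriously, the remaining claim as you describe it --- that a nonzero element of a finitely generated abelian group fails to lie in $p\HMto$ for \emph{some} prime $p$ --- is false without extra structure: take $A=\Z/4$ and $\alpha=2$, which lies in $pA$ for every prime $p$ and is torsion (so also ``divisible'' in the $p=0$ sense).  Your sketch (``for nonzero free part use the content, for torsion the bad primes are controlled by the torsion of $\HMto$'') glosses over exactly this obstruction.  The paper handles this step (in Proposition~\ref{T:basic inequalities h-invariant}(iv)) by passing to the long exact sequence of the pair $(X,X^\TT)$ in Borel cohomology and analyzing the connecting homomorphism $\delta_\Z\colon\Z\cong\BC^{\nu-2}(X^\TT;\Z)\to\BC^{\nu-1}(X,X^\TT;\Z)$, whose image is the cyclic group $\Z/k$ with $k\ne1$, and it computes $\im(\delta_\Z\otimes\F_p)$ in terms of $\Z/k\otimes\F_p$.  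Whatever one's opinion of the fine print there, your proposal does not reproduce any analogue of this computation, and without it the argument does not close.  You should also note that the paper obtains $h^s(X;\Z)=\max_p h^p(X)$ cohomologically and then converts $\max$ to $\min$ through two layers of duality (Lemma~\ref{T:monopole h-inv hom vs coh} and Proposition~\ref{T:duality SWF type}); if you insist on working homologically from the start you must carry out the corresponding cyclic-image analysis for $i_*\colon\HMbar_{\nu+1}(Y;\Z)\cong\Z\to\HMto_{\nu+1}(Y;\Z)$ rather than deferring it.
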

\begin{proof}
This will be part of \cref{T:coeff dep monopoles}.
\end{proof}
In summary, the strong $h$--invariant~$h_s(Y;\Z)$ gives the potentially strongest \Froyshov inequalities in \cref{T:Froyshov intro}, and it can be computed using prime field coefficients only, but one has to work with all prime fields.

\paragraph{Connection to $\TT$--equivariant topology.}
As mentioned, we approach the monopole $h$--invariants using the Seiberg--Witten--Floer homotopy types defined in~\cite{Manolescu_SWF_spectra_2003}.
The following definition will be central:  

\begin{definition}[cf.~\cites{Manolescu_spin_with_boundary_2014,Manolescu_triangulaion_JAMS_2016,DaiSasahiraStoffregen_lattice_vs_SWF_arxiv_v1_2023}]\label{D:SWF spaces intro}
A $\TT$--space~$X$ is said to be of \emph{type SWF} if the following conditions are satisfied:
\begin{enumerate}[(a)]
\item
$X$ is $\TT$--homotopy equivalent to a finite $\TT$--complex.
\item 
$\TT$ acts freely on~$X\setminus X^\TT$.
\item
$X^\TT\simeq S^\ell$ for some~$\ell$.
\end{enumerate}
\end{definition}

To every \spinc 3--manifold~$Y$ with~$b_1	(Y)=0$, Manolescu~\cite{Manolescu_SWF_spectra_2003} associates a $\TT$--equivariant stable homotopy type~$\SWF(Y)$ which can be represented by triples~$(X,\ell,n)$ where~$X$ is a $\TT$--space of type SWF, $\ell\in\Z$ is given by~$X^\TT\simeq S^\ell$, and $n\in \Z-n(Y)\subset\Q$.
Intuitively, $(X,\ell,n)$ should be thought of as a `desuspension' $\Sigma^{-\ell\R}\Sigma^{-n\C} X$, with the caveat that~$n$ is usually not an integer.
We refer to~\cite{DaiSasahiraStoffregen_lattice_vs_SWF_arxiv_v1_2023}*{Ch.3} for more details and a recent survey of the theory.
By a theorem of Lidman and Manolescu~\cite{LidmanManolescu_equivalance_2018}*{Thm.~1.2.1} (cf.~\cref{T:Lidman-Manolescu cohomology}), there is an isomorphism of~$\Z[u]$--modules 
	\begin{equation}\label{eq:Lidman-Manolescu intro}
	\HMto_*(Y;\Z)\cong \BH_{*+\ell+2n}(X;\Z).
	\end{equation}
Taking inspiration from \cite{Manolescu_triangulaion_JAMS_2016}*{Ch.~2.6}, we make the following definition:

\begin{definition}\label{D:topological h-invariants intro}
Let~$X$ be a $\TT$--space of type SWF with~$X^\TT\simeq S^\ell$. 
We write $i_X\colon X^\TT\hra X$ for the inclusion and define the \emph{weak} and \emph{strong} \emph{cohomological h-invariants} of~$X$ as
	\begin{align*}
	h^s(X;\Bbbk) &= \min\Set{k\ge0}{\text{$\BC^{\ell+2k}(X;\Bbbk) \xra{i_X^*} \BC^{\ell+2k}(X^\TT;\Bbbk)$ is surjective}}\\
	h^w(X;\Bbbk) &= \min\Set{k\ge0}{\text{$\BC^{\ell+2k}(X;\Bbbk) \xra{i_X^*} \BC^{\ell+2k}(X^\TT;\Bbbk)$ is non-zero}}.
	\end{align*}
Again, for fields we write~$h^\F(X) = h^{s/w}(X;\F)$ and~$h^p(X) = h^{\F_p}(X)$.
\end{definition}

Note that \cref{D:topological h-invariants intro} uses cohomology groups while the isomorphism in~\eqref{eq:Lidman-Manolescu intro} and \cref{D:monopole h-invariants homology intro} use homology groups.
This is reconciled by several layers of duality which are discussed in \cref{ch:HM review,ch:additivity etc,ch:relation to SW} which involve homological and cohomological versions of both the monopole $h$--invariants and their topological analogues.
For the moment, we content ourselves with the following statement:

\begin{proposition}\label{T:comparing h-invariants intro}
Let~$Y$ be a closed \spinc 3--manifold with~$b_1(Y)$ and~$\kk$ a principal ideal domain.
If~$\SWF(-Y)$ is represented by~$(X^*,\ell^*,n^*)$, then
	\begin{equation}\label{eq:h-invariant comparison homology intro}
	h_{s/w}(Y;\kk) 
	= n^*-h^{s/w}(X^*;\kk).
	\end{equation}
\end{proposition}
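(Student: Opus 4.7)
The approach reduces the monopole-theoretic definition of $h_{s/w}(Y;\kk)$ to a purely equivariant-topological condition on $X^*$, via three ingredients: the orientation-reversal duality between $\HMto$ and $\HMfrom$, the cohomological form of the Lidman--Manolescu theorem, and the Tate-localization structure that is forced by condition (b) in \cref{D:SWF spaces intro}.

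First, I would combine the standard duality $Y\rsa -Y$, which interchanges $\HMto$ with $\HMfrom$ and passes from homology to cohomology up to a grading shift, with the cohomological form of the Lidman--Manolescu isomorphism in \cref{eq:Lidman-Manolescu intro} to produce a ladder of $\kk[u]$--module isomorphisms identifying the monopole Floer exact triangle of $Y$ with a regraded copy of the equivariant exact triangle
\begin{equation*}
\cdots \to \cBH^{*-1}(X^*;\kk) \to \BH^{*}(X^*;\kk) \to \tBH^{*}(X^*;\kk) \to \cBH^{*}(X^*;\kk) \to \cdots
\end{equation*}
of $X^*$. The total grading shift is $\ell^*+2n^*$, and the identification should send the connecting map $p_*\colon \HMfrom_{*+1}(Y;\kk) \to \HMbar_*(Y;\kk)$ to the Borel-to-Tate map $\BH^{*}(X^*;\kk) \to \tBH^{*}(X^*;\kk)$.

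Next, I would reduce the Borel-to-Tate map to the restriction-to-fixed-points map $i_{X^*}^*$. Because $\TT$ acts freely on $X^* \sms X^{*\TT}$, a standard localization argument yields $\tBH^*(X^*;\kk) \cong \tBH^*(X^{*\TT};\kk)$, so the Borel-to-Tate map factors as
\begin{equation*}
\BH^*(X^*;\kk) \xra{i_{X^*}^*} \BH^*(X^{*\TT};\kk) \to \tBH^*(X^{*\TT};\kk) \cong \tBH^*(X^*;\kk).
\end{equation*}
Since $X^{*\TT} \simeq S^{\ell^*}$, the middle localization map is injective in every degree $\ge \ell^*$, so in those degrees the Borel-to-Tate map is non-zero (resp.\ surjective) in degree $\ell^*+2k$ exactly when $i_{X^*}^*$ is. This matches the cohomological condition in \cref{D:topological h-invariants intro}. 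With this in hand, the remaining task is bookkeeping: since $\mf{q}^{\mathrm{ev}}(Y) = 2\Z + 2n^*$, the maximum $q$ with $\tfrac12 q$ satisfying the monopole condition corresponds, after the degree shift, to the minimum $k \ge 0$ satisfying the SWF condition in degree $\ell^*+2k$, and the affine shift reads $\tfrac12 q = n^* - k$, proving \cref{eq:h-invariant comparison homology intro}.

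The main obstacle is the first step: setting up the cohomological Lidman--Manolescu identification over a PID together with the duality for monopole Floer homology with $\kk$--coefficients, and verifying that they intertwine so that $p_*$ really corresponds to the Borel-to-Tate map on $X^*$ (and not, say, its dual or a sign-swapped variant). Once this identification is in place, the remaining steps are standard equivariant topology on spaces of type SWF.
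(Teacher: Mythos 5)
Your proposal is correct and follows essentially the same route as the paper: the paper also first applies the Kronheimer--Mrowka duality isomorphism $h_{s/w}(Y;\kk) = -h^{s/w}(-Y;\kk)$ (\cref{T:monopole h-inv hom vs coh}), then invokes a cohomological Lidman--Manolescu ladder identifying $i^*\colon\HMto^*\to\HMbar^*$ with $\lambda^*\colon\BC^*\to t\BC^*$ after a grading shift by $\ell^*+2n^*$ (\cref{T:Lidman-Manolescu cohomology}), and finally reduces $\lambda^*$ to the fixed-point restriction $i_{X^*}^*$ via the Tate localization theorem (\cref{T:Localization,T:h via Tate}). The only cosmetic slip is writing the equivariant exact triangle with the homological macros $\cBH,\BH,\tBH$ where the cohomological $\cBC,\BC,\tBC$ are meant; the degree bookkeeping and the concern you flag about verifying that the LM isomorphism really intertwines $p_*$ with the Borel-to-Tate map are both addressed in the paper's \cref{T:Lidman-Manolescu cohomology}.
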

\begin{proof}
This follows from \cref{T:monopole h-inv hom vs coh,T:comparing h-invariants cohomology}.
\end{proof}
\cref{T:Froyshov intro,T:additivity etc intro,T:coeff dep intro} will be deduced from analogues for the topological \mbox{$h$--invariants} which will be stated and proved in~\cref{ch:topological h-invariants,ch:additivity etc}, respectively.
As for the dependence on coefficients, we will obtain the following in \cref{ch:examples}:
\begin{theorem}\label{T:examples intro}
There are $\TT$--spaces of type SWF~$X$ such that~$h^p(X)\ne h^q(X)$
\end{theorem}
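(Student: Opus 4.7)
The plan is to construct an explicit type SWF space whose Borel cohomology has $\F_p$--torsion placed so that the restriction to the fixed-point sphere behaves differently over $\F_p$ than over fields of other characteristic. The idea is to realise a $\TT$--equivariant analogue of a Moore space and use it as the non-fixed part.

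First, fix a prime $p$ and an integer $m\ge 2$, and set $\tilde{M} = \TT_+ \wedge M(\Z/p,m)$, where $M(\Z/p,m) = S^m \cup_p e^{m+1}$ is the ordinary Moore space. This is a finite pointed $\TT$--CW complex on which $\TT$ acts freely off the basepoint, with orbit space $\tilde{M}/\TT \simeq M(\Z/p,m)$; consequently $\BC^*(\tilde{M};\kk) \cong \tilde{H}^*(M(\Z/p,m);\kk)$, which is an $\F_p$ in degrees $m$ and $m+1$ when $\kk=\F_p$ but vanishes in positive degrees for $\kk=\F_q$ with $q\ne p$ and for $\kk=\Q$. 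Next, let $\ell=m+1$ and pick a $\TT$--equivariant map $\phi\colon \tilde{M}\to S^\ell$ whose associated orbit-level map $M(\Z/p,m)\to S^\ell$ is the quotient $M(\Z/p,m)\to M(\Z/p,m)/S^m = S^{m+1}$. Form the reduced mapping cone
\begin{equation*}
X = S^\ell \cup_\phi C\tilde{M}.
\end{equation*}
An elementary cellular check confirms that $X$ is a finite $\TT$--CW complex with $X^\TT = S^\ell$ and a free $\TT$--action elsewhere, so all three conditions of \cref{D:SWF spaces intro} hold and $X$ is of type SWF.

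The $h$--invariants are then read off from the long exact sequence of the cofibre $\tilde{M}\xra{\phi}S^\ell\to X$ in Borel cohomology,
\begin{equation*}
\cdots\to \BC^{n-1}(\tilde{M};\kk)\to\BC^n(X;\kk)\xra{i_X^*}\BC^n(S^\ell;\kk)\xra{\phi^*}\BC^n(\tilde{M};\kk)\to\cdots.
\end{equation*}
Since $\phi$ factors through the orbit projection, a short computation shows that $\phi^*\colon \kk[u]\cdot\iota_\ell\to\BC^*(\tilde{M};\kk)$ vanishes above degree $\ell$, and in degree $\ell=m+1$ it agrees with the induced map $\tilde{H}^\ell(S^\ell;\kk)\to\tilde{H}^\ell(M(\Z/p,m);\kk)$. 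For $\kk=\F_q$ with $q\ne p$ (or $\kk=\Q$) this map is zero by vanishing of the target, so $\phi^*=0$ identically and $i_X^*$ is surjective in every relevant degree, giving $h^q(X)=0$. For $\kk=\F_p$ the map is instead an isomorphism $\F_p\to\F_p$, so $i_X^*$ has zero image in degree $\ell$ but becomes surjective in degree $\ell+2$, yielding $h^p(X)=1\ne h^q(X)$.

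The main technical point to verify is that the mapping-cone construction does not introduce an extra isolated fixed point outside $S^\ell$, and that $\phi$ can indeed be chosen to induce the prescribed quotient on orbits. Both points reduce to elementary cellular inspection of the reduced mapping cone. A subtler variant, asking whether such an $X$ can be realised as $\SWF(Y)$ for some concrete \spinc 3--manifold $Y$, appears considerably harder and is not pursued here.
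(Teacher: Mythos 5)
Your construction is correct and yields a genuine example, but the route differs from the paper's. You use a Moore space: set $\tilde{M}=\TT_+\wedge M(\Z/p,m)$, realise an equivariant map $\phi\colon\tilde{M}\to S^{m+1}$ covering the collapse $M(\Z/p,m)\to S^{m+1}$, and take the mapping cone. The torsion in $\tilde{H}^*(M(\Z/p,m))$ is then exactly what makes $\phi^*$ an isomorphism in degree $\ell$ over $\F_p$ and zero over $\F_q$ with $q\ne p$, so the long exact sequence of the cofibre gives $h^p(X)=1$ and $h^q(X)=0$ directly. The paper instead works with the two-parameter family $X_{a,b}$, obtained by attaching a single free $(n+1)$--cell to $S^{\ell,h}\vee(\TT_+\wedge S^n)$ along an attaching map classified by $(a,b)\in\Z^2$ via \cref{T:attaching maps}, and computes $h_p(X_{a,b})$ for all $p$ simultaneously in \cref{T:h-invariants of X_ab}. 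Both approaches encode the same idea — a free part whose reduced Borel cohomology has $p$--torsion linking the two relevant degrees — and your $X$ is essentially the special case $h=0$, $(a,b)\sim(1,p)$ of $X_{a,b}$ up to $\TT$--homotopy. What your argument buys is economy: you do not need the classification \cref{T:attaching maps}, since the degree of $\phi^*$ is read off from the cohomology of the Moore space. What the paper's formulation buys is re-usability: having a computed family $X_{a,b}$ lets one choose $a,b,c$ distinct primes to also exhibit strict sub-additivity in \cref{T:additivity top}, which a single Moore-space example does not provide as directly.

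One small point worth tightening in your write-up: you should say explicitly that the $\kk[u]$--module structure forces $\phi^*$ to vanish above degree $\ell$, namely $\phi^*(u^k\iota_\ell)=u^k\phi^*(\iota_\ell)$ lands in $\BC^{\ell+2k}(\tilde{M};\kk)\cong\tilde{H}^{\ell+2k}(M(\Z/p,m);\kk)=0$ for $k\ge1$; and that the identification of $\phi^*$ in degree $\ell$ with $\bar\phi^*$ uses the naturality of the comparison between Borel and orbit-space cohomology for the factorisation $\phi=\bar\phi\circ\pi$ together with (B\ref{fact: free action}) and (B\ref{fact: trivial action}). Neither affects correctness.
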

\begin{proof}
This follows from \cref{T:h-invariants of X_ab} and \cref{eg:example for h-difference}.
\end{proof}
Unfortunately, we are not aware of any such examples arising from~$SWF(Y)$ of a 3--manifold~$Y$, leaving the problem of a possible coefficient dependence of the monopole $h$--invariants open.

\begin{remark}\label{R:erratum etc}
(i)
The absence of~$h_s(Y;\Z)$ from \cref{T:additivity etc intro} should also be noted.
In fact, the duality property~$h_s(Y;\Z) = -h_s(-Y;\Z)$ is equivalent to~$h_p(Y)$ being independent of~$p$.
One direction follows from the description of $h_s(Y;\Z)$ in \cref{T:coeff dep intro}(ii), which together with \cref{T:additivity etc intro}(ii) also gives $h_s(-Y;\Z)=-\max_p h_p(Y)$.
For the other direction, we note that \cref{T:additivity etc intro}(ii) and \cref{T:coeff dep intro}(i) applied to~$Y$ and~$-Y$ give
	\begin{equation}
	h_s(Y;\Z)\le h_p(Y) = -h_p(Y) \le -h_s(-Y;\Z).
	\end{equation}

(ii)
The work on the present article was partly motivated by a careless error in the author's joint article~\cite{BehrensGolla_twisted_2018} with Marco Golla.
In~\cite{BehrensGolla_twisted_2018}*{Prop.~3.4} we claimed an inequality $\underline{d}_0(Y,\mf t)\ge \underline{d}_p(Y,\mf t)$ which would imply~$h_0(Y)\ge h_p(Y)$ for all~$p$ (see \cref{ch:precursors}).
In turn, this would imply that~$h_0(Y)=h_p(Y)$ for all~$p$ by a similar argument as in~(i).
While we are not aware of any counterexamples, it has come to our attention that certain abstract chain complexes constructed in~\cite{GorskyLidmanBeibeiMoore_triple_linking_2022} contradict the argument in the proof of~\cite{BehrensGolla_twisted_2018}*{Prop.~3.4}.
\cref{T:coeff dep intro} resulted from an attempt to rectify the argument, while \cref{T:examples intro} 
provides further evidence that~$h_p(Y)$ should not be independent of~$p$, in general.

The main results of~\cite{BehrensGolla_twisted_2018} are unaffected and we are not aware of any references to the potentially faulty statement. 
The journal has been made aware of the situation and the ar$\chi$iv version of~\cite{BehrensGolla_twisted_2018} will be updated.
\end{remark}

\paragraph{Acknowledgments.}
The author would like to thank Marco Golla by pointing out the error in~\cite{BehrensGolla_twisted_2018} outlined in \cref{R:erratum etc}(ii) above.

\section{Circle actions and Borel cohomology}
\label{ch:Borel}

Let~$\TT$ denote the multiplicative group complex numbers of unit modulus.
We use the terminology $\TT$--spaces, $\TT$--maps, etc.\ for spaces\footnote{We implicitly work in a convenient category of spaces, e.g.~compactly generated weak Hausdorff spaces.} with continuous left $\TT$--actions, $\TT$--equivariant maps, and so on.
Base points for $\TT$--spaces are required to be \mbox{$\TT$--fixed} 
and non-degenerate\footnote{i.e.\ the inclusion $\{*\}\hra X$ is a cofibration}.
The notation~$X_+$ indicates the adjunction of a disjoint base point.

By a \emph{$\TT$--representation} we mean a real inner product space~$V$ of finite dimension~$|V|$ together with a $\TT$--action by orthogonal transformations.
We write~$S(V)$, $D(V)$, and~$S^V$ for the unit sphere, unit disk, and one-point compactification of~$V$.
The latter is based at infinity.
We implicitly fix a based $\TT$--homeomorphism $D(V)_+/S(V)_+\cong S^V$ to obtain cofiber sequence of based $\TT$--spaces
	\begin{equation}\label{eq:cofiber seq V}
	S(V)_+\to D(V)_+ \to S^V.
	\end{equation}
For a based $\TT$--space~$X$ we define its \emph{$V$--suspension} as~$\Sigma^VX=X\wedge S^V$.
We will mostly deal with representations of the form~$\R^k\oplus\C^k$ with $\TT$--acting trivially on~$\R$ and by complex multiplication on~$\C$, and we write $S^{k,m}$ for the one-point compactification.

Let~$\C^\infty=\colim_n \C^n$ be the direct sum of countably many copies of~$\C$ with the indicated colimit topology.
Then~$\ET=S(\C^\infty)=\colim_nS(\C^n)$ is a non-equivariantly contractible CW~complex on which $\TT$--acts freely with orbit space~$\BT=\ET/\TT\homeo \CPinf$.
For a $\TT$--space~$X$ the \emph{unreduced Borel construction}~$\Bor{X}=\ET\times_\TT X$ is the total space of a fiber bundle with typical fiber~$X$ over~$\BT$.
We have a fiber sequence of unbased spaces
	\begin{equation}
	X\overset{j}{\hra} \Bor X \xra p \BT
	\end{equation}
with $j$ defined by~$j(x)=[e,x]$ with some~$e\in \ET$ chosen once and for all.
If~$X$ is based, the base point~$*$ gives rise to a canonical section $s\colon \BT\to \Bor X$ with image~$\Bor *$.
Since~$*$ is non-degenerate, $s$ is a cofibration and its cofiber is the \emph{reduced Borel construction}
	$\Borr X = \ET_+\wedge_\TT X \cong \Bor X/\Bor *$.
We record the cofiber sequence as
	\begin{equation}
	\BT \xra s \Bor X \xra q \Borr X.
	\end{equation}
The reduced and unreduced \emph{Borel cohomology} groups of~$X$ with coefficients in a commutative ring with unit~$\kk$ are defined as the ordinary cohomology groups
	\begin{equation}\label{eq:Borel cohomology def}\begin{split}
	H^*_\TT(X;\kk) &= H^*(\Bor X;\kk) \cong \tilde{H}^*\big((\Bor X)_+;\kk\big) \\
	\tilde{H}^*_\TT(X;\kk) &= \tilde{H}^*(\Borr X;\kk) \cong H^*(\Bor X,\Bor *;\kk).
	\end{split}\end{equation}
The \emph{Borel homology} groups~$H_*^\TT(X;\kk)$ and $H_*^\TT(X;\kk)$ are defined analogously.
We omit the coefficients from the notation when no confusion can arise.
We will later make use of the following facts about Borel (co-)homology which can be found in the textbooks~\cites{tomDieck_transformation_groups_1987,Hsiang_Borel_cohomology_book_1975}:
\begin{enumerate}[({B}1)]
\item \label{fact:module structure}
$\BC^*(X)$ and~$\BH_*(X)$ as well as their unreduced analogues are modules over the polynomial ring $\BC^*(S^0)\cong H^*(\CPinf)=\kk[u]$ with~$u\in H^2(\CP^\infty)$ the image of the Euler class of the tautological line bundle under the change of coefficients map~$H^2(\CPinf;\Z)\to H^2(\CPinf;\kk)$.

\item \label{fact: suspension isomorphisms}
If~$X$ is a $\TT$--space and~$V$ a $\TT$--representation, then $\Bor{(X\times V)}$ is a vector bundle over~$\Bor X$.
Since $\TT$ is connected, this bundle is orientable and its orientations correspond to those of~$V$.
If~$X$ is based, then~$\Borr{(X\wedge S^V)}$ is the obtained from the Thom space of~$\Bor{(X\times V)}\to \Bor X$ by collapsing that of~$\Bor{(*\times V)} \to \Bor *$.
After fixing an orientation on~$V$, the corresponding (relative) Thom isomorphism gives  \emph{suspension isomorphisms}
	\begin{equation}
	s_V\colon \BC^*(X)\xra\cong \BC^{*+|V|}(X\wedge S^V).
	\end{equation}
for all coefficient rings~$\kk$.

\item \label{fact: trivial action}
If~$\TT$ acts trivially on~$X$,
then~$\Bor X \homeo \BT\times X$ and~$\Borr X\homeo \BT_+\wedge X$.
In particular, since~$\BT\homeo\CPinf$ has torsion free integral cohomology, the non-equivariant Künneth theorem gives isomorphisms
	\begin{equation}
	H^*_\TT(X)\cong H^*(\BT)\otimes_\Z H^*(X) \AND 
	\BC^*(X)\cong H^*(\BT)\otimes_\Z \tH^*(X)
	\end{equation}
and the (left) $H^*(\BT)$--module structures on the left correspond to the obvious ones on the right.

\item \label{fact: free action}
If~$\TT$ acts freely on~$X$, then the canonical map $\Bor X\to X/\TT$ is a homotopy equivalence.
Similarly, if~$X$ is based and $\TT$--acts freely on~$X\setminus\{*\}$,
then $\Bor X\to X/\TT$ factors through a homotopy equivalence~$\Borr X \homequ X/\TT$.

\item \label{fact: Serre spectral sequence}
Since~$\BT$ is simply-connected and has torsion free integral cohomology, the Serre spectral sequence for~$\Bor X\to \BT$ takes the form 
	\begin{equation}
	E_2^{p,q} \cong H^p(\BT)\otimes_\Z H^q(X) \Longrightarrow H^{p+q}_G(X).
	\end{equation}
The the $E_\infty$~terms along the edges are related to the fibration as follows:
	\begin{equation}
	E_\infty^{0,*}\cong \im j^*\subset H^*(X)
	\AND
	E_\infty^{*,0}\cong \im p^*\subset H^*(\Bor X).
	\end{equation}

\item \label{fact: reduced vs unreduced}
For based~$X$, the maps~$p$, $s$, and~$q$ induce split exact sequences
	\begin{equation*}\begin{tikzcd}
	0 \rar &
		\BC^*(X) \rar{q^*} 	&
			H^*_\TT(X) \rar{s^*} &
				H^*(\BT) \rar \ar[l,"p^*"',bend right=30] &
					0 \\
	0 \rar &
		H_*(\BT) \rar{s_*} &
			H_*^\TT(X) \rar{q_*} \ar[l,"p_*"',bend right=30] &
				\tilde H_*^\TT(X) \rar &
					0
	\end{tikzcd}\end{equation*}
relating the reduced and unreduced versions of Borel (co-)homology.

\item \label{fact: Borel vs colimits}
Reduced Borel homology and cohomology are additive under wedge sums.
More generally, the functor $X\mapsto \Bor X$ preserves wedge sums and push-outs.
In fact, it has a right adjoint~$Z\mapsto F(\ET_+,Z)$ where $Z$ is a space with trivial $\TT$--action and~$F$ is the based function space with $\TT$ acting by conjugation.
Hence, $X\mapsto \Bor X$ preserves all colimits.
\end{enumerate}

\section{The monopole h--invariants}
\label{ch:HM review}

In this section we review the relevant background from Seiberg--Witten theory on 3--manifolds.
Let~$Y$ be a closed, connected, oriented 3--manifold, implicitly equipped with a Riemannian metric~$g$ and a \spinc structure~$\mf t$.
Throughout, we will always assume that~$b_1(Y)=0$.
The \spinc structure determines and is determined by a spinor bundle~$S$ with Clifford multiplication~$\rho\colon T^*Y\to\End_\C(S)$ (see~\cite{KronheimerMrowka_book_2007}*{Ch.~1.1}).
We also fix a \spinc connection~$B_0$ on~$S$.
\paragraph{Monopole Floer homology and cohomology.}
Seiberg--Witten theory on~$Y$ is governed by the \emph{Chern--Simons--Dirac functional}
	\begin{equation}
	\mc L\colon i\Omega^1(Y)\oplus \Gamma(S)\to \R,\quad \mc L(b,\phi)=\tfrac12\scp{b,*db}_{L^2}+\tfrac12\scp{\phi,D\phi+\rho(b)}_{L^2}.
	\end{equation}
in the sense that the downward gradient flow equation $\dot x + \nabla\mc L(x)=0$ recovers the Seiberg--Witten equations on the cylinder~$\R\times Y$.
In the case~$b_1(Y)=0$, the Chern--Simons--Dirac functional is invariant under the action of the group of smooth maps~$u\colon Y\to\TT$ sending a configuration~$(b,\phi)$ to~$(b-u\inv du,u\phi)$.
The subgroup of constant maps is canonically identified with~$\TT$.
In a nutshell, \emph{monopole Floer (co-)homology} as defined by Kronheimer and Mrowka in~\cite{KronheimerMrowka_book_2007} is obtained by adapting a construction from finite dimensional $\TT$--equivariant Morse theory to suitable perturbations of~$\mc L$.
The outcome is a long exact sequence of $\kk[u]$--modules
	\begin{equation}\label{eq:monopole exact sequence homology}\begin{tikzcd}[column sep=small]
	\cdots \rar&
		\HMfrom_{*+1}(Y) \rar{p_*} & 
			\HMbar_{*}(Y) \rar{i_*} & 
				\HMto_{*}(Y) \rar{j_*} & 
					\HMfrom_{*}(Y) \rar &
						\cdots
	\end{tikzcd}\end{equation}
which are relatively $\Z$--graded over~$\kk$ and the action of~$u$ has degree~$-2$.
There is also a cohomological version
	\begin{equation}\label{eq:monopole exact sequence cohomology}\begin{tikzcd}[column sep=small]
	\cdots &
		\HMfrom^{*+1}(Y) \ar[l] & 
			\HMbar^{*}(Y) \ar[l,"p^*"'] & 
				\HMto^{*}(Y) \ar[l,"i^*"'] & 
					\HMfrom^{*}(Y) \ar[l,"j^*"'] &
						\cdots \ar[l]
	\end{tikzcd}\end{equation}
where the $u$--action has degree~$+2$.
The following algebraic features are known:
\begin{enumerate}[({HM}1)]
\item
$\HMfrom_*(Y)$ and~$\HMto^*(Y)$ are finitely generated over~$\kk[u]$.
\item
The $u$--actions on~$\HMbar_*(Y)$ and~$\HMbar^*(Y)$ are invertible.
\item
The maps $p_*$ and~$i^*$ are exhibit $\HMbar_*(Y)$ and~$\HMbar^*(Y)$ as the localizations $u\inv\HMfrom_*(Y)$ and~$u\inv\HMto^*(Y)$.
\item
The relative $\Z$--gradings can be lifted to absolute gradings with values in $\mf q(Y) = \Z - 2n(Y)\subset\Q$ where $n(Y)$ is a spectral invariant associated to the Dirac operator~$D$ (cf.~\cite{KronheimerMrowka_book_2007}*{Ch.~28.3}, \cite{Manolescu_SWF_spectra_2003}*{p.~909~f.}).
\item
$\HMfrom_*(Y)$ and~$\HMfrom^*(Y)$ vanish in sufficiently high degrees while $\HMto_*(Y)$ and~$\HMto^*(Y)$ vanish in sufficiently low degrees.
\item
$\HMbar_*(Y)$ and~$\HMbar^*(Y)$ are supported in degrees~$\mf q^\mathrm{ev}(Y)=2\Z-2n(Y)$ and are isomorphic to~$\kk$ in these degrees.
\end{enumerate}
The properties~(HM4)--(HM6) rely on the assumption that~$b_1(Y)=0$ while everything else holds more generally.
All of these properties are either proved explicitly in~\cite{KronheimerMrowka_book_2007} or easily derived.
Alternatively, they follow from statements in the subsequent sections (specifically, \cref{T:SWF Borel cohomology}, \cref{T:Localization}, diagram~\eqref{eq:duality diagram HM}, \cref{T:Lidman-Manolescu cohomology}).

\paragraph{The monopole $h$--invariants.}
By the properties (HM5) and~(HM6) above, the map~$p_*$ vanishes in sufficiently high degrees and is an isomorphism in sufficiently low degrees.
In particular, $p_*$ is non-zero and surjective in sufficiently low degrees in~$\mf q^\mathrm{ev}(Y)$.
Similarly, $i^*$ vanishes in low degrees and is an isomorphism in high degrees.
This gives meaning to the following definition which is a mild generalization of~\cite{KronheimerMrowkaOzsvathSzabo_lens_space_surgeries_2007}*{Def.~2.11}:

\begin{definition}\label{D:monopole h-invariants}
Let~$Y$ be a closed \spinc 3--manifold with~$b_1(Y)=0$ and~$\kk$ a commutative ring with unit.
We define the homological and cohomological \emph{monopole $h$--invariants} as
	\begin{align}
	h_w(Y;\kk) &= \tfrac12 \max\Set{q\in\mf q^\mathrm{ev}(Y)}{p_{q-1}\ne0} \\
	h_s(Y;\kk) &= \tfrac12 \max\Set{q\in\mf q^\mathrm{ev}(Y)}{\text{$p_{q-1}$ surjective}} \\
	h^w(Y;\kk) &= \tfrac12 \max\Set{q\in\mf q^\mathrm{ev}(Y)}{i^q\ne0} \\
	h^s(Y;\kk) &= \tfrac12 \max\Set{q\in\mf q^\mathrm{ev}(Y)}{\text{$i^q$ surjective}}
	\end{align}
For a field~$\F$ we write~$h^\F(Y)=h^w(Y;\F)=h^s(Y;\F)$ and~$h^p(Y)=h^{\F_p}(Y)$ and similarly~$h_\F(Y)$ and~$h_p(Y)$ for the homological versions.
\end{definition}

\begin{remark}\label{R:why so many definitions}
The homological and cohomological $h$--invariants are equivalent as functions of~$Y$ (see \cref{T:monopole h-inv hom vs coh} below), which explains the strong homological bias of the Floer literature.
However, we find it more convenient to have both versions available.
Similarly, we find the distinction between the strong and weak flavors helpful, although we will see in \cref{T:coeff dep monopoles} that the invariants~$h_p(Y)$ are sufficient for all practical purposes.
\end{remark}

\begin{lemma}\label{T:monopole h-inv hom vs coh}
Let~$-Y$ be the 3--manifold obtained by reversing the orientation of~$Y$ equipped with the \spinc structure with spinor bundle~$(S,-\rho)$. 
Then
	\begin{equation}\label{eq:h-invariants homology vs cohomology}
	h_{s/w}(Y;\kk) = -h^{s/w}(-Y;\kk).
	\end{equation}
\end{lemma}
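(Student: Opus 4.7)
The claimed identity will follow from a Poincar\'e-type duality swapping monopole Floer homology of $Y$ with monopole Floer cohomology of $-Y$. Concretely, the plan is to establish natural isomorphisms of graded $\kk[u]$--modules
\begin{align*}
\HMfrom_k(Y;\kk) \cong \HMto^{-k}(-Y;\kk), \qquad
\HMbar_k(Y;\kk) \cong \HMbar^{-k}(-Y;\kk), \qquad
\HMto_k(Y;\kk) \cong \HMfrom^{-k}(-Y;\kk),
\end{align*}
which fit together into an isomorphism between the long exact sequence \eqref{eq:monopole exact sequence homology} for $Y$ and the reindexed long exact sequence \eqref{eq:monopole exact sequence cohomology} for $-Y$, exchanging the triple $(p_*, i_*, j_*)$ of $Y$ with $(i^*, p^*, j^*)$ of $-Y$. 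Such a duality is implicit in Kronheimer--Mrowka's book; alternatively, from the topological perspective adopted in this paper, it descends from the fact that $\SWF(-Y)$ is the equivariant \SWtoo dual of $\SWF(Y)$, together with the universal-coefficient identification of Borel cohomology with the dual of Borel homology, using the Lidman--Manolescu comparison \eqref{eq:Lidman-Manolescu intro}.

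Granted the duality, the rest is bookkeeping. Orientation reversal together with the change of Clifford action $\rho\mapsto-\rho$ replaces the relevant \spinc Dirac operator by its adjoint and thus negates its spectrum, which yields $n(-Y) = -n(Y)$ and hence $\mf q^\mathrm{ev}(-Y) = -\mf q^\mathrm{ev}(Y)$. Under the identification above, the assertion that $p_{q-1}$ is nonzero (respectively surjective) for $Y$ at $q\in\mf q^\mathrm{ev}(Y)$ translates into the analogous assertion for the matching component of $i^*$ for $-Y$ at the dual degree $-q\in\mf q^\mathrm{ev}(-Y)$ (up to the conventional parity shift inherited from $p_*\colon\HMfrom_{*+1}\to\HMbar_*$). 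Substituting $q\leftrightarrow -q$ then converts a $\max$ over $\mf q^\mathrm{ev}(Y)$ into the negative of a $\max$ over $\mf q^\mathrm{ev}(-Y)$, yielding $h_{s/w}(Y;\kk) = -h^{s/w}(-Y;\kk)$ in both the weak and the strong flavor simultaneously, since the duality preserves the property of being surjective as well as that of being nonzero.

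\textbf{Main obstacle.} The technical heart is establishing the duality isomorphism of triangles with correct grading shifts. This requires careful tracking of the parity shift inherent in $p_*\colon\HMfrom_{*+1}\to\HMbar_*$ and of the rational offset $2n(Y)$ in $\mf q(Y)$. Once the duality is in place and the identification $n(-Y) = -n(Y)$ recorded, the remaining manipulation of maxima is purely formal, so I expect the degree-tracking in the duality to be the only nontrivial part of the argument.
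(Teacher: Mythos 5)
Your proposal takes the same route as the paper: invoke a Kronheimer--Mrowka duality isomorphism exchanging the homological exact triangle of one orientation with the cohomological triangle of the other, record $n(-Y) = -n(Y)$, and then convert a max into the negative of a max by the substitution $q \mapsto -q$. The paper cites~\cite{KronheimerMrowka_book_2007}*{Cor.~22.5.10, Prop.~28.3.4} directly for a commutative (up to sign) diagram realizing exactly this, so the two proofs are essentially identical in structure.

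One caution, which you yourself flag as the ``main obstacle'' but do not resolve: the three isomorphisms you display, e.g.\ $\HMfrom_k(Y;\kk) \cong \HMto^{-k}(-Y;\kk)$ and $\HMbar_k(Y;\kk)\cong\HMbar^{-k}(-Y;\kk)$, are internally inconsistent with the claimed exchange $p_*\leftrightarrow i^*$: the map $p_*$ \emph{lowers} degree by one while $i^*$ \emph{preserves} degree, so a duality with zero shift in all three terms cannot carry one to the other. The correct version (and what Kronheimer--Mrowka actually give) is
\begin{equation*}
\HMbar_k(-Y)\cong\HMbar^{-k-2}(Y),\qquad
\HMto_k(-Y)\cong\HMfrom^{-k-1}(Y),\qquad
\HMfrom_k(-Y)\cong\HMto^{-k-1}(Y),
\end{equation*}
so that $p_{q-1}\colon\HMfrom_{q-1}(-Y)\to\HMbar_{q-2}(-Y)$ corresponds to $i^{-q}\colon\HMto^{-q}(Y)\to\HMbar^{-q}(Y)$. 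With that pinned down, the substitution $q\mapsto -q$ and the factor $\tfrac12$ in the definition give exactly $h_{s/w}(Y;\kk)=-h^{s/w}(-Y;\kk)$, so the rest of your bookkeeping is fine.
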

\begin{proof}
According to \cite{KronheimerMrowka_book_2007}*{Cor.~22.5.10 and Prop.~28.3.4}, for each~$q\in \mf q(Y)$ there is a diagram with vertical isomorphisms
	\begin{equation}\label{eq:duality diagram HM}\begin{tikzcd}[column sep=small]
	\HMbar_{q-1}(-Y) \rar{i_*} \dar{\cong} &
	\HMto_{q-1}(-Y) \rar{j_*} \dar{\cong} &
	\HMfrom_{q-1}(-Y) \rar{p_*} \dar{\cong} &
	\HMbar_{q-2}(-Y) \dar{\cong} 
	\\
	\HMbar^{-q-1}(Y) \rar{p^*} &
	\HMfrom^{-q}(Y) \rar{j^*} &
	\HMto^{-q}(Y) \rar{i^*} &
	\HMbar^{-q}(Y) 
	\end{tikzcd}\end{equation}
which commutes up to sign. 
This immediately implies~\eqref{eq:h-invariants homology vs cohomology}.
\end{proof}
\begin{remark}\label{R:grading convention}
The diagram above also explains the grading conventions in \cref{D:monopole h-invariants}.
Furthermore, it indicates that~$\mf q(-Y)=-\mf q(Y)$ which follows from~$n(-Y)=-n(Y)$.
The latter is clear from the descriptions of~$n(Y)$ in~\cite{Manolescu_SWF_spectra_2003}*{p.~909~f.}.
\end{remark}

\section{Topological h--invariants}
\label{ch:topological h-invariants}

Recall from \cref{D:SWF spaces intro} that a $\TT$--space of type SWF is a semi--free $\TT$--space~$X$ with~$X^\TT\simeq S^\ell$ for some~$\ell$ which is $\TT$--homotopy equivalent to a finite $\TT$--complex. 
We write~$i_X\colon X^\TT\hra X$ for the fixed point inclusion.
The Borel cohomology of these spaces has a rather simple structure.

\begin{lemma}\label{T:SWF Borel cohomology}
Let~$X$ be a $\TT$--space of type~SWF with~$X^\TT\simeq S^\ell$ and~$\kk$ is commutative ring with unit.
\begin{enumerate}[(i)]
\item
$\BC^*(X;\kk)$ is finitely generated as a $\kk[u]$--module.
\item
$\BC^*(X^\TT;\kk)\cong \kk[u]\, s_X$ with a generator $s_X\in \BC^\ell(X^\TT)\cong\kk$.
\item
$i_X^*\colon \BC^*(X;\kk)\to\BC^*(X^\TT;\kk)$ is an isomorphism in sufficiently high degrees.
\item
$H_\TT^*(X,X^\TT;\kk)$ is torsion over~$\kk[u]$.
\end{enumerate}
\end{lemma}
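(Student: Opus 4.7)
The plan is to combine the finite $\TT$-CW model of~$X$, the triviality of the action on~$X^\TT$, and the freeness on~$X\sms X^\TT$ with the facts recorded in~\cref{ch:Borel}. Part~(ii) is immediate from fact~(B3): since $\TT$ acts trivially on $X^\TT\simeq S^\ell$,
\[
\BC^*(X^\TT;\kk) \cong H^*(\BT;\kk) \otimes_\Z \tilde H^*(S^\ell;\kk) \cong \kk[u],
\]
with the generator~$s_X$ sitting in degree~$\ell$.

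For part~(i), I would run the Serre spectral sequence of fact~(B5) for the Borel fibration $\Bor X\to\BT$. Because $X$ is $\TT$-homotopy equivalent to a finite $\TT$-complex, its underlying space is a finite CW complex, whence $H^*(X;\kk)$ is finitely generated over~$\kk$. Thus $E_2\cong\kk[u]\otimes_\kk H^*(X;\kk)$ is finitely generated over the Noetherian ring $\kk[u]$, and hence so is the subquotient~$E_\infty$. The filtration on $H^*_\TT(X;\kk)$ is finite in each total degree, so successively lifting a finite set of $\kk[u]$-generators of~$E_\infty$ yields a finite generating set for $H^*_\TT(X;\kk)$; fact~(B6) then transports finite generation to~$\BC^*(X;\kk)$.

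For part~(iii), I would consider the cofiber sequence of based $\TT$-spaces $X^\TT\hookrightarrow X\to X/X^\TT$ with basepoint chosen in~$X^\TT$. The quotient $X/X^\TT$ inherits a finite $\TT$-CW structure and has free $\TT$-action away from its basepoint, so fact~(B4) yields $\Borr{(X/X^\TT)}\simeq(X/X^\TT)/\TT$, a finite CW complex whose reduced cohomology vanishes above some degree. Since the reduced Borel construction preserves cofibers by fact~(B7), we obtain the long exact sequence
\[
\cdots \to \BC^n(X/X^\TT;\kk) \to \BC^n(X;\kk) \xrightarrow{i_X^*} \BC^n(X^\TT;\kk) \to \BC^{n+1}(X/X^\TT;\kk) \to \cdots
\]
and vanishing of the outer terms for $n$ large forces $i_X^*$ to be an isomorphism in high degrees. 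Part~(iv) follows: fact~(B6) upgrades this to an isomorphism of unreduced Borel cohomology, so the long exact sequence of the pair $(X,X^\TT)$ gives $H^n_\TT(X,X^\TT;\kk)=0$ for $n$ sufficiently large; any graded $\kk[u]$-module vanishing in high degrees is $u$-torsion, since some power of $u$ shifts every element into a vanishing degree.

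The one delicate point is the setup in~(iii): one has to verify that~$X/X^\TT$ inherits a \emph{based}, finite $\TT$-CW structure with non-degenerate basepoint and free action off the basepoint, so that fact~(B4) applies and $\Borr{(X/X^\TT)}$ has the homotopy type of a finite (non-equivariant) CW complex. Once this bookkeeping is in place, every step reduces to an elementary manipulation of long exact sequences and the Noetherianity of $\kk[u]$.
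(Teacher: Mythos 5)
Your proposal is correct, and the core arguments match the paper's: part~(ii) is the same use of~(B3), and the heart of~(iii)--(iv) is the same observation that $\tilde H^*_\TT(X/X^\TT;\kk)\cong\tilde H^*\bigl((X/X^\TT)/\TT;\kk\bigr)$ is bounded because $(X/X^\TT)/\TT$ is a finite CW complex. Two small deviations are worth noting. First, for~(i) you run the Serre spectral sequence and then lift $\kk[u]$--generators from $E_\infty$; the paper instead inducts over the skeletal filtration of a finite $\TT$--CW model, which avoids the lifting step (in exchange, your route packages the finiteness into Noetherianity of~$\kk[u]$, which is clean if a little heavier). Second, you prove~(iii) first and derive~(iv) as a corollary of the resulting high-degree vanishing of the relative groups; the paper reverses the order, establishing~(iv) directly from the free-quotient description of $H^*_\TT(X,X^\TT)$ and then reading off~(iii) from the long exact sequence of the pair $(X,X^\TT)$. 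These are the same argument traversed in opposite directions, and both are fine; your ordering has the minor cost that you need to invoke~(B6) and the pair sequence at the end, whereas the paper goes through the relative groups once. Your flagged ``delicate point'' about $X/X^\TT$ carrying a based finite $\TT$--CW structure with non-degenerate basepoint is a reasonable thing to check, and it holds: collapsing a subcomplex of a finite $\TT$--CW complex yields a finite based $\TT$--CW complex with cofibration basepoint, and semi-freeness of the action on~$X$ off $X^\TT$ passes to freeness on $X/X^\TT$ off the basepoint.
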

\begin{proof}
We may assume that~$X$ is a finite $\TT$--complex.
Then~(i) is easily proved by induction over the skeletal filtration and (ii) follows from (B\ref{fact: trivial action}) applied to~$X^\TT$.
Similarly, (iv) follows from (B\ref{fact: free action}) applied to~$Q_X=X/X^\TT$, on which~$\TT$ acts freely away from the base point, with the observation that 
	\begin{equation}
	H_\TT^*(X,X^\TT) \cong \BC^*(Q_X) \cong \tH^*(Q_X/\TT)
	\end{equation}
is non-zero in at most finitely many degrees, since~$Q_X/\TT$ is a finite CW--complex in the ordinary sense.
Lastly, (iii) follows from the long exact sequence in Borel cohomology for the pair~$(X,X ^\TT)$.
\end{proof}

\paragraph{Topological h--invariants.}
As a consequence of~\cref{T:SWF Borel cohomology}, the map~$i_X^*$ vanishes in degrees below~$\ell$ and is non-zero and surjective in degrees~$\ell+2k$ with~$k\ge0$ sufficiently large.
This justifies \cref{D:topological h-invariants intro} which we repeat for convenience.
\begin{definition}\label{D:h invariants topological}
Let~$X$ be a $\TT$--space of type SWF with~$X^\TT\simeq S^\ell$ and~$\kk$ a commutative ring with unit.
The integers
	\begin{align*}
	h^s(X;\Bbbk) &= \min\Set{k\ge0}{\text{$\BC^{\ell+2k}(X;\Bbbk) \xra{i_X^*} \BC^{\ell+2k}(X^\TT;\Bbbk)$ is surjective}}\\
	h^w(X;\Bbbk) &= \min\Set{k\ge0}{\text{$\BC^{\ell+2k}(X;\Bbbk) \xra{i_X^*} \BC^{\ell+2k}(X^\TT;\Bbbk)$ is non-zero}}
	\end{align*}
are called the \emph{strong} and \emph{weak (cohomological) $h$--invariants} of~$X$  with coefficients in~$\kk$.
For coefficients in a field~$\F$ we only have one $h$--invariant
	\begin{equation}
	h^\F(X) = h^w(X;\F) = h^s(X;\F)
	\end{equation}
and for prime fields~$\F_p$ we abbreviate further to~$h^p(X)=h^{\F_p}(X)$.
\end{definition}

We will explain the precise relation to \cref{D:monopole h-invariants} in \cref{ch:relation to SW} after establishing some basic properties of the topological $h$--invariants.

\paragraph{The tower interpretation.}
We first record a convenient interpretation of the $h$-invariants for field coefficients.
Recall that~$\F[u]$ is a principal ideal domain and that every finitely generated~$\F[u]$--module~$M$ is isomorphic to a sum of cyclic modules of the form~$\F[u]/(u^k)$ with~$k\ge0$.
The summands with~$k>0$ constitute the torsion submodule.
If $M$ is further $\Z$--graded over~$\F$ such that~$u$ has degree~$\pm2$, then each cyclic summand can be visualized as a `tower' starting in certain degree.
It is common folklore in the 3--manifold community that~$\HMfrom_*(Y;\F)$ has exactly one infinite tower which is generated in a certain degree that is measured by~$h_\F(Y)$.
The same holds for the topological $h$--invariants.

\begin{lemma}\label{T:SWF Borel cohomology fields}
Let~$X$ be a $\TT$--space of type SWF with~$X^\TT\simeq S^\ell$.
Further, let~$\F$ a field and write~$h=h^\F(X)$.
Then for any $0\ne s_X\in \BC^\ell(X^\TT;\F)$ we have
	\begin{equation}
	i_X^*\BC^*(X;\F) = \F[u]\,u^hs_X
	\end{equation}
and there is a splitting 
	\begin{equation}
	\BC^*(X;\F) = \F[u]\,\tilde s_X \oplus T\BC^*(X;\F)
	\end{equation}
where $\tilde s_X\in\BC^{\ell+2h}$ satisfies~$i_X^*\tilde s_X = u^hs_X$ and $T\BC^*(X;\F)$ is the $\F[u]$--torsion submodule.
In particular, $\BC^*(X;\F)$ has rank one over~$\F[u]$ and the free part is generated in degree~$h^\F(X)$.
\end{lemma}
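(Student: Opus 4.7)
The plan is to leverage the structure theorem for finitely generated modules over the PID $\F[u]$, combined with the long exact sequence of the pair $(X,X^\TT)$ in Borel cohomology, to pin down both the image of $i_X^*$ and the torsion-free summand explicitly.

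First, I would identify the kernel of $i_X^*$ with the torsion submodule $T\BC^*(X;\F)$. The long exact sequence of the pair $(X,X^\TT)$ in reduced $\TT$--equivariant cohomology reads
\begin{equation*}
\cdots \to H^*_\TT(X,X^\TT;\F) \to \BC^*(X;\F) \xra{i_X^*} \BC^*(X^\TT;\F) \to \cdots,
\end{equation*}
and \cref{T:SWF Borel cohomology}(iv) says the leftmost term is $\F[u]$--torsion, so $\ker i_X^*\subset T\BC^*(X;\F)$. The reverse inclusion is automatic because the target $\BC^*(X^\TT;\F)=\F[u]\,s_X$ is torsion-free by \cref{T:SWF Borel cohomology}(ii). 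Hence $i_X^*$ induces an injective graded $\F[u]$--module map $\bar\imath\colon\BC^*(X;\F)/T\BC^*(X;\F)\hookrightarrow\F[u]\,s_X$.

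Next, I would describe the image. Any nonzero graded $\F[u]$--submodule of $\F[u]\,s_X$ is necessarily generated by a monomial $u^k s_X$ (graded ideals of $\F[u]$ are principal and admit a homogeneous generator), so $\im i_X^*=\F[u]\,u^k s_X$ for some $k\ge0$. By the very definition of $h=h^\F(X)$, the degree $\ell+2h$ is the smallest in which $i_X^*$ is nonzero, forcing $k=h$. This gives the first assertion.

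For the splitting, pick any homogeneous $\tilde s_X\in\BC^{\ell+2h}(X;\F)$ with $i_X^*\tilde s_X=u^h s_X$, which exists since $i_X^*$ surjects onto $\F[u]\,u^h s_X$. Because $\F[u]\,s_X$ is free and $i_X^*$ is $\F[u]$--linear, the element $\tilde s_X$ has infinite $u$--order, so $\F[u]\,\tilde s_X\subset\BC^*(X;\F)$ is free of rank one. Its projection to $\BC^*(X;\F)/T\BC^*(X;\F)$ is an isomorphism because, under the injection $\bar\imath$, its image is all of $\F[u]\,u^h s_X=\im\bar\imath$. This yields the graded direct sum decomposition $\BC^*(X;\F)=\F[u]\,\tilde s_X\oplus T\BC^*(X;\F)$.

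There is no real obstacle: once the kernel of $i_X^*$ has been matched with the torsion submodule via the long exact sequence, everything reduces to the standard structure theory of graded modules over the PID $\F[u]$, together with the observation that nonzero graded submodules of the rank one free module $\F[u]\,s_X$ are necessarily of the form $(u^k)\,s_X$.
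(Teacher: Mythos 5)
Your argument is correct and is exactly the unpacking of what the paper compresses into one line: it uses the long exact sequence of $(X,X^\TT)$ together with \cref{T:SWF Borel cohomology}(ii)--(iv) to identify $\ker i_X^*$ with the torsion submodule, then invokes the structure theory of graded modules over the PID $\F[u]$ (graded ideals are generated by $u^k$) to pin down the image and produce the splitting. This is the same approach as the paper's, just spelled out in full detail.
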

\begin{proof}
This immediately follows from \cref{T:SWF Borel cohomology} and the fact that~$\F[u]$ is a principal ideal domain.
\end{proof}
\begin{remark}\label{R:towers only for fields}
If~$\kk$ is not a field, then the structure of $\kk[u]$--modules can be more complicated and possible tower interpretations of~$h^w(X;\kk)$ and~$h^s(X;\kk)$ are less straightforward and arguably less illuminating.
The exception is the case when~$i_X^*\BC^*(X,\kk)$ is free as a~$\kk[u]$--module and can thus be embedded into~$\BC^*(X,\kk)$ as a summand.
The latter is an infinite tower of the form~$\kk[u]$ generated in the degree $\ell+2h^s(X;\kk)$.
We also note that $h^s(X;\kk)=h^w(X;\kk)$ in this case.
\end{remark}

\paragraph{A comparison result.}
One of our main objectives is to investigate the dependence of the invariants~$h^{s/w}(X;\kk)$ on the choice of~$\kk$ and the flavor.
By definition, we have an inequality
	\begin{equation}
	h^w(X;\kk)\le h^s(X;\kk)
	\end{equation}
which can be strict, in general, as we will see in \cref{eg:example for h-difference} below.
The next result highlights the role of the invariants~$h^p(X)$ derived from the prime fields.
Recall that every principal ideal domain is an integral domain and thus has a field of fractions.

\begin{proposition}\label{T:basic inequalities h-invariant}\mbox{}
Let~$X$ be a $\TT$--space of type~SWF and~$\kk$ a principal ideal domain.
\begin{enumerate}[(i)]
\item
If $\F$ is the field of fractions of~$\kk$, then $h^\F(X)=h^w(X;\kk)$.
\item
If~$\F$ is an arbitrary field of characteristic~$p$, then $h^\F(X)=h^p(X)$.
\item
We have $h^s(X;\kk)\le h^s(X;\Z)$.
\item
For coefficients in~$\Z$ we have
	\begin{equation}\label{eq:h-invariants integers}
	h^w(X;\Z)=h^0(X) \eqand h^s(X;\Z) = \max_p h^p(X).
	\end{equation}
Moreover, the equality $h^p(X)=h^s(X;\Z)$ either holds for all~$p$ or for at most finitely many~$p$.
\end{enumerate}
\end{proposition}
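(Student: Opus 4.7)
The plan is to reduce all four parts to a combination of flatness arguments, change-of-coefficients considerations, and a careful analysis of the long exact sequence of the pair $(X,X^\TT)$, with the genuine difficulty concentrated in the second half of~(iv).

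For parts~(i) and~(ii), I would rely on a universal coefficient argument. In~(i) the localization map $\kk \to \F$ is flat, so a cellular cochain model for $\Borr{X}$ yields a natural isomorphism $\BC^*(X;\F) \cong \BC^*(X;\kk)\otimes_\kk\F$ compatibly with $i_X^*$, and similarly for $X^\TT$. In each degree $\ell+2k$ the image of $i_X^*$ is an ideal of $\BC^{\ell+2k}(X^\TT;\kk)\cong\kk$; since $\kk$ is an integral domain, localization to $\F$ preserves non-vanishing of this ideal, giving $h^w(X;\kk) = h^\F(X)$. Part~(ii) is the same argument with $\kk=\F_p$, using that any field extension is flat and that the tower decomposition of Lemma~\ref{T:SWF Borel cohomology fields} is preserved by base change. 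For part~(iii), I would simply observe that naturality of $i_X^*$ under $\BC^*(X;\Z)\to\BC^*(X;\kk)$ sends a class $\alpha\in\BC^{\ell+2h^s(X;\Z)}(X;\Z)$ witnessing surjectivity over $\Z$ to a class $\alpha_\kk$ still hitting the generator of $\BC^{\ell+2h^s(X;\Z)}(X^\TT;\kk)\cong\kk$, so $h^s(X;\kk)\le h^s(X;\Z)$.

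In part~(iv), the identity $h^w(X;\Z)=h^0(X)$ is the special case $\kk=\Z$, $\F=\Q$ of~(i), and the easy direction $\max_p h^p(X) \le h^s(X;\Z)$ follows by combining~(ii) with~(iii): $h^p(X) = h^s(X;\F_p) \le h^s(X;\Z)$ for each prime~$p$, and $h^0(X) = h^w(X;\Z) \le h^s(X;\Z)$. For the reverse inequality I would pass to the long exact sequence of $(X,X^\TT)$ and introduce the boundary classes $\tau_k^\kk = \delta_\kk(s_X u^k) \in H^{\ell+2k+1}_\TT(X,X^\TT;\kk)$, so that $i_X^*$ is surjective in degree $\ell+2k$ over $\kk$ exactly when $\tau_k^\kk=0$. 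Naturality of the long exact sequence together with the universal coefficient theorem identifies $h^p(X) = \min\{k : \tau_k^\Z \in pK^{\ell+2k+1}\}$, where $K = H^*_\TT(X,X^\TT;\Z)$; that is, $h^p(X)$ is the first degree at which $\tau_k^\Z$ becomes $p$-divisible. The task then reduces to showing that $\tau_K^\Z=0$ whenever $\tau_K^\Z$ is $p$-divisible for every prime $p$, where $K=\max_p h^p(X)$.

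To accomplish this I would exploit the structure of $K$: by Lemma~\ref{T:SWF Borel cohomology}(iv) it is $\Z[u]$-torsion, and the identification $K\cong\BC^*(X/X^\TT;\Z)\cong\tH^*\bigl((X/X^\TT)/\TT;\Z\bigr)$ presents it as the reduced cohomology of a finite CW complex, so each graded piece is finitely generated, $K$ vanishes above some degree, and the $u$-action (cup product with the Euler class of the principal $\TT$-bundle $X/X^\TT\to(X/X^\TT)/\TT$) is nilpotent. Combining the nilpotence of $u$ with the hypothesis that $\tau_K^\Z = u^K\tau_0^\Z$ is $p$-divisible for every prime~$p$ should yield $\tau_K^\Z=0$. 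The dichotomy in the \emph{Moreover} clause will then follow from the observation that $\BC^*(X;\Z)$ has only finitely many torsion primes, so that $h^p(X)=h^0(X)$ for all but finitely many $p$, making $h^p(X)=h^s(X;\Z)$ either automatic for every $p$ (when $h^0(X)=h^s(X;\Z)$) or restricted to the finitely many exceptional primes.

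I expect the main obstacle to be this final structural step in~(iv): the naive algebraic statement ``an element of a finitely generated $\Z$-module divisible by every prime must vanish'' is false in general, so the argument must genuinely leverage the graded $\Z[u]$-module structure of $K$ and the special form $\tau_k^\Z=u^k\tau_0^\Z$, beyond what the abstract $\Z$-module viewpoint alone can supply.
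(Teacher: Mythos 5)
Parts (i)--(iii) and the easy half of (iv) (namely $h^w(X;\Z)=h^0(X)$ and $\max_p h^p(X)\le h^s(X;\Z)$) are correct and follow essentially the same universal-coefficient route as the paper. Your reformulation of surjectivity of $i_X^*$ in terms of the vanishing of $\tau_k^\kk=\delta_\kk(s_Xu^k)$, and the passage to $\Z$ via the injectivity of $H^*_\TT(X,X^\TT;\Z)\otimes\F_p\hookrightarrow H^*_\TT(X,X^\TT;\F_p)$, are also what the paper does.

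The gap is in the reverse inequality of (iv). You correctly observe that one must show $\tau^\Z_\kappa=0$ under the hypothesis that $\tau^\Z_\kappa$ is $p$-divisible for every prime $p$, and you correctly observe that this is \emph{not} a formal consequence of $\tau^\Z_\kappa$ lying in a finitely generated abelian group (e.g.\ $2\in\Z/4$ is $p$-divisible for every $p$). But you then only speculate that ``combining the nilpotence of $u$ with $\tau^\Z_\kappa=u^\kappa\tau^\Z_0$ should yield'' the conclusion, without an actual argument. That speculation is not the right ingredient: the nilpotence of $u$ on $H^*_\TT(X,X^\TT;\Z)$ plays no role at this step, and it is unclear how to make it bear on a divisibility question in a fixed graded piece. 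What the paper actually exploits is the extra structure that $\tau^\Z_\kappa$ is a \emph{boundary} in the long exact sequence of $(X,X^\TT)$ with rank-one source $\BC^{\ell+2\kappa}(X^\TT;\Z)\cong\Z$: by exactness over $\Z$, the cyclic group $\Z\tau^\Z_\kappa\cong\im\delta_\Z$ has order $k$ equal to the index $m=[\Z:\im i^*_\Z]$, and since $\BC^*(X^\TT;\Z)$ is torsion-free, the naturality of the universal coefficient theorem forces $\im(i^*_{\F_p})=\im(i^*_\Z\otimes1_p)=m\,\F_p$. Thus $i^*_{\F_p}$ is surjective iff $p\nmid m$; $p$-divisibility of $\tau^\Z_\kappa$ for all $p$ would force $m=1$, contradicting non-surjectivity of $i^*_\Z$. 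This is an entirely elementary exact-sequence argument that never invokes the $\Z[u]$-module structure of $H^*_\TT(X,X^\TT;\Z)$, only the torsion-freeness of $\BC^*(X^\TT;\Z)$.

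A secondary issue: your argument for the ``Moreover'' clause, via the finiteness of torsion primes of $\BC^*(X;\Z)$, would only show that $h^p(X)=h^s(X;\Z)$ holds for all but finitely many $p$ when $h^0(X)=h^s(X;\Z)$; it does not by itself deliver the stronger conclusion that it holds for \emph{all} $p$ in that case. The paper's dichotomy falls out of the same index $m$: it is automatic for every $p$ when $m=0$ (i.e.\ when $i^*_\Z$ vanishes one step below $h^s(X;\Z)$), and restricted to the prime factors of $m$ when $m>1$.
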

\begin{proof}
For every $\kk$--module~$M$ a version of the ordinary universal coefficient theorem applied to~$\ETwt X$ gives short exact sequences
	\begin{equation}\label{eq:UCT for BC}
	0\to \BC^n(X;\kk)\otimes_\kk M \to \BC^n(X;M) \to \Tor_\kk\big(\BC^{n+1}(X;\kk),M\big) \to0
	\end{equation}
for all~$n\ge0$ (see~\cite{tomDieck_algebraic_topology_2008}*{(11.9.6)}, for example).
The statements (i)--(iv) can be proved using different instances of~\eqref{eq:UCT for BC}.
Since~$X$ is fixed while the coefficients vary, we write~$i_\kk^*\colon\BC^*(X;\kk)\to\BC^*(X^\TT;\kk)$ for the maps induced by the fixed point inclusion.

\medskip
(i) Since~$\F$ is flat as a $\kk$--module (e.g.~\cite{Rotman_homological_algebra_2nd_2009}*{Cor.~5.35}), the $\Tor$~term in~\eqref{eq:UCT for BC} vanishes and we have a commutative diagram
	\begin{equation*}\begin{tikzcd}
	\BC^{\ell+2n}(X;\kk)\otimes_\kk\F \rar{i^*_\kk\otimes1} \dar{\cong}& 
		\BC^{\ell+2n}(X^\TT;\kk)\otimes_\kk\F \dar{\cong} \\ 
	\BC^{\ell+2n}(X;\F) \rar{i^*_\F} & 
		\BC^{\ell+2n}(X^\TT;\F)
	\end{tikzcd}\end{equation*}
in which the vertical maps are isomorphisms.
This shows that~$i^*_\F\ne 0$ if and only if~$i^*_\kk\otimes1\ne0$.
Moreover, it is elementary that~$i^*_\kk\otimes1\ne0$ if and only if~$i^*_\kk\ne 0$.
If follows that~$h^w(X;\kk)=h^w(X;\F)$.
\medskip

(ii) Consider~$\F$ as a vector space over~$\F_p$.
Again, the $\Tor$~term in~\eqref{eq:UCT for BC} vanishes and we can argue as above using the diagram
	\begin{equation*}\begin{tikzcd}
	\BC^{\ell+2n}(X;\F_p)\otimes_\kk \F \rar{i^*_{\F_p}\otimes1} \dar{\cong}& 
		\BC^{\ell+2n}(X^\TT;\F_p)\otimes_\kk\F \dar{\cong} \\ 
	\BC^{\ell+2n}(X;\F) \rar{i^*_\F} & 
		\BC^{\ell+2n}(X^\TT;\F).
	\end{tikzcd}\end{equation*}
\medskip

(iii) If we consider~$\kk$ as a $\Z$--module, we get a similar commutative diagram
	\begin{equation*}\begin{tikzcd}
	\BC^{\ell+2n}(X;\Z)\otimes_\Z \kk \rar{i^*_{\Z}\otimes1} \dar& 
		\BC^{\ell+2n}(X^\TT;\Z)\otimes_\Z\kk \dar{\cong} \\ 
	\BC^{\ell+2n}(X;\kk) \rar{i^*_\kk} & 
		\BC^{\ell+2n}(X^\TT;\kk),
	\end{tikzcd}\end{equation*}
but the left vertical map is no longer guaranteed to be an isomorphism.
Nevertheless, we still have an isomorphism on the right and the surjectivity of~$i^*_\Z$ implies that of~$i^*_\Z\otimes1$, since the functor~$\_\otimes_\Z\kk$ is right exact.
By commutativity, the map~$i^*_\kk$ is also surjective and therefore we must have~$h^s(X;\kk)\le h^s(X;\Z)$.
\medskip

(iv)
The equality $h^w(X\;\Z)=h^0(X)$ is a special case of~(i)
and we know from~(iii) that $h^p(X)\le h^s(X;\Z)$ for all~$p$.
We have to exhibit at least one~$p$ for which~$h^p(X)=h^s(X;\Z)$.
For brevity, we write~$\nu=\ell+2h^s(X;\Z)$ and note that~$h^p(X)<h^s(X;\Z)$ if and only if~$i^*_{\F_p}$ is non-zero in degree~$\nu-2$.

Note that for arbitrary~$\kk$, we have an exact sequence
	\begin{equation}
	\BC^{\ell+2k}(X;\kk) \xra{i^{\ell+2k}_\kk} 
	\BC^{\ell+2k}(X^\TT;\kk) \xra{\delta^{\ell+2k}_\kk}
	\BC^{\ell+2k+1}(X,X^\TT;\kk) 
	\end{equation}
showing that $i^*_\kk$ is surjective if and only if~$\delta_\kk$ vanishes.
This gives alternative descriptions
	\begin{equation}
	h^s(X;\kk) 
	= \min\Set{k\ge0}{\delta^{\ell+2k}_\kk=0} 
	= \max\Set{k\ge0}{\delta^{\ell+2k-2}_\kk\ne0}.
	\end{equation}
Consider the following commutative diagram:
	\begin{equation}\label{eq:Z vs F_p}\begin{tikzcd}
	\BC^{\nu-2}(X;\Z)\otimes_\Z\F_p \rar{i^{\nu-2}_\Z\otimes 1_p} \ar[d,hook] &
		\BC^{\nu-2}(X^\TT;\Z)\otimes_\Z\F_p \rar{\delta^{\nu-2}_\Z\otimes 1_p} \dar{\cong} &
			\BC^{\nu-1}(X,X^\TT;\Z)\otimes_\Z\F_p \ar[d,hook] \\
	\BC^{\nu-2}(X;\F_p) \rar{i^{\nu-2}_{\F_p}} &
		\BC^{\nu-2}(X^\TT;\F_p) \rar{\delta^{\nu-2}_{\F_p}} &
			\BC^{\nu+1}(X,X^\TT;\F_p)^{\nu-2}
	\end{tikzcd}\end{equation}
The vertical maps are injective or isomorphisms as indicated, as can be seen from~\eqref{eq:UCT for BC}.
The second row in~\eqref{eq:Z vs F_p} is always exact while the first may fail to be exact for~$p>0$.
We can conclude that
	\begin{equation}
	i^{\nu-2}_{\F_p}\ne0 \quad\Leftrightarrow\quad 
	\delta^{\nu-2}_{\F_p}=0 \quad\Leftrightarrow\quad 
	\delta^{\nu-2}_\Z\otimes1_p=0
	\end{equation}
where the first equivalence holds since~$\F_p$ is a field and the second uses the commutativity of the right square in the diagram.
We thus have to understand the map~$\delta_\Z\otimes1_p$.
For that purpose, we fix a generator~$\xi \in \BC^{\nu-2}(X^\TT;\Z)\cong\Z$ and observe that~$\delta_\Z(\xi)$ generates a cyclic subgroup of~$\BC^{\nu-1}(X,X^\TT;\Z)$ isomorphic to~$Z/k\Z$ for some~$k\ge0$.
By assumption, $i^{\nu-2}_\Z$ is not surjective so that have~$k\ne1$.
We further have
	\begin{equation}
	\im(\delta_\Z\otimes1_p)\cong
	\im(\delta_\Z)\otimes\F_p\cong 
	\Z/k\Z\otimes_\Z \F_p \cong
	\begin{cases}
	\F_p &\text{if $k\equiv0\mod p$} \\
	0 & \text{else}.
	\end{cases}
	\end{equation}
and arrive at the following conclusion:
	\begin{equation}
	h^p(X)=h^s(X;\Z) \quad\Leftrightarrow\quad 
	\delta^{\nu-2}_\Z\otimes1_p\ne0 \quad\Leftrightarrow\quad 
	k\equiv0\mod p.
	\end{equation}
The last condition clearly holds for at least one~$p$.
Indeed, if~$k=0$, then it holds for all~$p$, and if~$k>0$, it holds precisely for the finitely many prime factors of~$k$.
This completes the proof of~(iv).
\end{proof}

\section{Some examples}
\label{ch:examples}

We will now construct a family of $\TT$--spaces~$X_{a,b}$ of type~SWF indexed by integers~$a,b\in\Z$ such that such that for suitable values of~$a,b$ the invariants~$h^{s/w}(X_{a,b};\kk)$ differ depending of flavor and the choice of coefficients.
As before, $\kk$ be any commutative ring with unit.
We will frequently refer use the properties (B\ref{fact:module structure})--(B\ref{fact: Borel vs colimits}) of Borel cohomology listed 
on page~\pageref{fact:module structure}.

\paragraph{The starting point.} 
The simplest examples of $\TT$--spaces of type~SWF are the spheres~$S^{\ell,h}$ associated to the $\TT$--representations $\R^\ell\oplus\C^h$.
Clearly, $(S^{\ell,h})^\TT\cong S^\ell$ and the $\TT$--action is free away from the fixed points.
Using~(B\ref{fact: suspension isomorphisms}) we find
	\begin{equation}
	\BC^*(S^{\ell,h};\kk)\cong\kk[u]\, e_{\ell,h}
	\AND
	\BC^*(S^\ell;\kk)\cong\kk[u]\, s_\ell
	\end{equation}
with generators~$e_{\ell,h}\in \BC^{\ell+2h}(S^{\ell,h};\kk)$ and $s_\ell\in \BC^\ell(S^\ell;\kk)$ and we may assume that $i^*e_{\ell,h}=u^hs_\ell$ where~$i$ is the fixed points inclusion.
For the latter, note that $e_{\ell,h}$ and~$s_\ell$ are Thom classes for the vector bundles over~$\BT$ associated to the representations~$\R^\ell\oplus\C^h$ and~$\R^\ell$.
From this we can deduce
	\begin{equation}
	h^w(S^{\ell,h};\kk)=
	h^s(S^{\ell,h};\kk)=h
	\end{equation}
In particular, all possible $h$--invariants agree in this case.

\paragraph{A trivial cell attachment.}
Starting with~$S^{\ell,h}$, we write~$n=\ell+2h$ and attach a $\TT$--free $n$--cell to obtain the space
	\begin{equation}
	A = S^{\ell,h} \vee (\TT_+\wedge S^{n}).
	\end{equation}
This is again a $\TT$--space of type~SWF with $A^\TT\cong S^\ell$.
We can compute the Borel cohomology of~$A$ and~$A^\TT$ using the properties~(B\ref{fact: suspension isomorphisms}), (B\ref{fact: free action}) and~(B\ref{fact: Borel vs colimits}):
as $\kk[u]$--modules they are given by
	\begin{equation}
	\BC^*(A) = \kk[u]\, e_A \oplus \kk\, f_A \AND 
	\BC^*(A^\TT) = \kk[u]\, s_A
	\end{equation}
with generators~$e_A,f_A\in \BC^{n}(A)$, $s_A\in \BC^\ell(A^\TT)$ and~$u$ acting trivially on~$\kk$.
The inclusion $i_A\colon A^\TT\hra A$ maps the generators as
	\begin{equation}
	i_A^*(f_A)=0 \AND 
	i_A^*(e_A)=u^hs_A
	\end{equation}
which follows from the computation for~$S^{\ell,h}$ and the fact that~$uf_A=0$.
The $h$--invariants of~$A$ are thus unaffected by the cell attachment and we find
	\begin{equation}
	h^w(A;\kk) = 
	h^s(A;\kk) = h
	\end{equation}
for all choices of~$\kk$.

\paragraph{A non-trivial cell attachment.}
The construction continues by attaching a $\TT$--free $(n+1)$--cell to~$A = S^{\ell,h} \vee (\TT_+\wedge S^{n})$.
The relevant attaching maps are parameterized by the based $\TT$--homotopy set~$[(\TT\times S^n)_+,A]^\TT$.
For $n\ge2$ we find:

\begin{lemma}\label{T:attaching maps}
Let $A=S^{\ell,h}\vee(\TT_+\wedge S^n)$ with $n=\ell+2h\ge2$ and fix a generator~$e_n\in\BC^n((\TT\times S^n)_+)\cong \tilde{H}^n(S^n)$.
Then the following map is a bijection:
	\begin{equation}
	[(\TT\times S^n)_+,A]^\TT \to\Z\oplus\Z,\quad
	\varphi\mapsto \big(\langle e_V,\varphi_*e_n\rangle, \langle f_V,\varphi_*e_n \rangle \big).
	\end{equation}
\end{lemma}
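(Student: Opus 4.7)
The plan is to reduce the computation to a non-equivariant problem via the free-orbit adjunction and then identify the claimed pairing with the Hurewicz-style pullback map on cohomology evaluated on a standard basis of $\pi_n$ of a wedge of spheres.

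First, I would use that $(\TT\times S^n)_+ = \TT_+\wedge S^n_+$ with $\TT$ acting by left translation on the $\TT_+$--factor only. The standard free-orbit adjunction gives a natural bijection
\[
[(\TT\times S^n)_+, A]^\TT = [\TT_+\wedge S^n_+, A]^\TT \cong [S^n_+, A]
\]
where the right hand side is taken non--equivariantly based. Using the splitting $B_+ \homequ B\vee S^0$ and smashing with $S^n$ gives $\TT_+\wedge S^n \homequ S^n\vee(\TT\wedge S^n) = S^n\vee S^{n+1}$, and combining with $S^{\ell,h}\homeo S^n$ topologically, we obtain $A\homequ S^n\vee S^n\vee S^{n+1}$. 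In particular, $A$ is simply connected for $n\ge 2$, so that $[S^n_+, A] \cong \pi_n(A)$ (the disjoint basepoint contributes no freedom since $A$ is path connected).

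Second, I would compute $\pi_n(A)$. From the decomposition $A \homequ S^n\vee S^n\vee S^{n+1}$ and the fact that, for $n\ge 2$, all Whitehead products of the wedge inclusions live in degrees at least $2n-1 > n$, one has
\[
\pi_n(A) = \Z\,\iota_1 \oplus \Z\,\iota_2,
\]
where $\iota_1$ is represented by the inclusion of $S^{\ell,h}\homeo S^n$ and $\iota_2$ by the $S^n$--summand of the splitting $\TT_+\wedge S^n\homequ S^n\vee S^{n+1}$.

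Third, I would interpret the pairing. By adjointness, $\langle e_A,\varphi_* e_n\rangle = \langle \varphi^*e_A, e_n\rangle$, and similarly for $f_A$, so the map of the lemma sends $\varphi$ to the pair of integer multiples with which $\varphi^*$ carries $e_A$ and $f_A$ into $\BC^n((\TT\times S^n)_+) \cong \Z\,e_n$. For $\iota_1$, the adjoint $\TT$--map factors through $S^{\ell,h}\hookrightarrow A$, hence $\varphi^*f_A=0$ because $f_A$ restricts to zero on $S^{\ell,h}$, while the restriction of $e_A$ to $S^{\ell,h}$ is the Thom class $e_{\ell,h}$, and naturality of the suspension isomorphism (B\ref{fact: suspension isomorphisms}) identifies its further pullback to $\BC^n((\TT\times S^n)_+)$ with $\pm e_n$. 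For $\iota_2$ the roles of $e_A$ and $f_A$ are exchanged by the analogous argument using (B\ref{fact: free action}) and the identification $(\TT_+\wedge S^n)/\TT \homequ S^n$. Hence the pairing matrix on $\{\iota_1,\iota_2\}$ is $\pm I$, proving bijectivity.

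The main obstacle will be the third step: one must carefully trace through the adjunction to confirm that the non-equivariant generator corresponding to $\iota_2$ pairs non-trivially against $f_A$ and trivially against $e_A$, and conversely for $\iota_1$. This requires checking that, under the equivalence $(\TT_+\wedge S^n)/\TT\homequ S^n$ of (B\ref{fact: free action}), the class $f_A$ corresponds to the non-equivariant fundamental class whose pairing with the orbit-quotient of the adjoint map of $\iota_2$ is $\pm 1$; everything else then falls out of additivity of the pairing and the wedge decomposition.
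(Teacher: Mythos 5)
Your overall strategy is correct and closely parallel to the paper's, but the endgame differs.  Both proofs begin with the free-orbit adjunction $[(\TT\times S^n)_+,A]^\TT\cong[S^n_+,A]$ and the observation that $A$ is non-equivariantly $(n-1)$--connected so this set is $\pi_n(A)$.  From there the paper continues abstractly: it composes the Hurewicz isomorphism $\pi_n(A)\cong\tilde H_n(A;\Z)$ with the comparison map $\bar j_*\colon\tilde H_n(A;\Z)\to\BH_n(A;\Z)$ (proved to be an isomorphism on each wedge summand in Lemma~\ref{T:fiber inclusion}), identifies the whole composite with $\varphi\mapsto\varphi_*e_n$, and then invokes the universal coefficient theorem together with the freeness of $\BH_*(A;\Z)$ and the fact that $e_A,f_A$ generate $\BC^n(A;\Z)$ to conclude the pairing is perfect.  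You instead compute $\pi_n(A)\cong\Z\oplus\Z$ explicitly by decomposing $A\simeq S^n\vee S^n\vee S^{n+1}$ and then evaluate the pairing directly on the two wedge generators $\iota_1,\iota_2$, showing the matrix is $\pm I$.  Both are legitimate; the paper's route sidesteps the explicit evaluation entirely, whereas yours requires pinning down the pullbacks $\varphi_i^*e_A$ and $\varphi_i^*f_A$.

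That last step is where your write-up is thinner than it needs to be.  You justify $\varphi_1^*e_A = \pm e_n$ by ``naturality of the suspension isomorphism,'' but the adjoint $\TT$--map $(\TT\times S^n)_+\to S^{\ell,h}$ is not a suspension, so that is not quite the right tool.  The cleanest way to complete the step is exactly the device the paper isolates as Lemma~\ref{T:fiber inclusion}: form the commutative square comparing $\varphi_i^*$ on Borel cohomology with the map induced on ordinary cohomology by restriction to the fiber $\{1\}\times S^n$, and observe that the comparison map $\bar j^*$ is an isomorphism in degree $n$ for $S^{\ell,h}$ (an $n$--dimensional representation sphere), for $\TT_+\wedge S^n$ (a free $\TT$--space, via (B\ref{fact: free action})), and also for $(\TT\times S^n)_+ = \TT_+\wedge S^n_+$ (again (B\ref{fact: free action}), which gives $\BC^n\cong H^n(S^n)\cong\Z$, and a short Künneth check that $\bar j^*$ hits a generator).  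Note in particular that the second case of Lemma~\ref{T:fiber inclusion}, which assumes $Z$ is $(n-1)$--connected, does not literally cover $Z=S^n_+$, so the $(\TT\times S^n)_+$ instance does need its own one-line verification.  With that in place, the fiber restrictions of $\varphi_1,\varphi_2$ are degree~$\pm1$ maps and degree~$0$ maps as you describe, and the pairing matrix is $\pm I$ as claimed.  So the argument goes through, but filling the acknowledged gap in your third step essentially forces you to reprove (a mild variant of) the paper's Lemma~\ref{T:fiber inclusion}, at which point the paper's UCT conclusion is available and slightly shorter.
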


We postpone the proof to finish the construction of our examples.
For given~$a,b\in\Z$ let~$X_{a,b}$ be the space obtained by attaching a free $\TT$--cell along the a based $\TT$--map~$\varphi_{a,b}\colon (\TT\times S^{n})_+\to A$ with 
	\begin{equation}
	\langle\varphi_{a,b}^*e_A,e_n\rangle = a \AND 
	\langle\varphi_{a,b}^*f_A,e_n\rangle = b.
	\end{equation}
The $h$--invariants of~$X_{a,b}$ with field coefficients are given as follows:
\begin{lemma}\label{T:h-invariants of X_ab}
For~$p$ a prime or zero we have
	\begin{equation}\label{eq:h-invariants of X_ab}
	h_p(X_{a,b}) = 
	\begin{cases}
	h+1, & \text{$b\in p\Z$ and $a\notin p\Z$} \\
	h, & \text{else}
	\end{cases}
	\end{equation}
\end{lemma}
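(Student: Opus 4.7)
The plan is to exploit the cofiber sequence $(\TT\times S^n)_+ \xrightarrow{\varphi_{a,b}} A \to X_{a,b}$ of based $\TT$--spaces together with the observation that the fixed point inclusion $i_{X_{a,b}}\colon S^\ell\hra X_{a,b}$ factors as $S^\ell=A^\TT\hra A\hra X_{a,b}$. Accordingly, $i_{X_{a,b}}^*$ on Borel cohomology is the composition of the map $\BC^*(X_{a,b})\to\BC^*(A)$ coming from the long exact sequence with the already understood $i_A^*$, and this factorization will govern the analysis.

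First I would compute $\BC^*((\TT\times S^n)_+;\F_p)$ via~(B\ref{fact: free action}): since $\TT$ acts freely on $\TT\times S^n$, the reduced Borel construction is homotopy equivalent to the orbit space $S^n_+$, giving $\BC^*((\TT\times S^n)_+;\F_p)\cong H^*(S^n;\F_p)$, which is $\F_p$ in degrees~$0$ and~$n$ and zero otherwise. By~\cref{T:attaching maps}, $\varphi_{a,b}^*\colon\BC^n(A;\F_p)\to\BC^n((\TT\times S^n)_+;\F_p)$ sends $e_A\mapsto\bar a\,e_n$ and $f_A\mapsto\bar b\,e_n$, where bars denote reduction modulo~$p$. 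Feeding this into the long exact sequence yields $\BC^n(X_{a,b};\F_p) = \ker\varphi_{a,b}^*$ and $\BC^{n+2k}(X_{a,b};\F_p)\cong\BC^{n+2k}(A;\F_p)=\F_p\cdot u^k e_A$ for $k\ge 1$, since $\varphi^*$ has vanishing source and target in those degrees. An extra class in degree~$1$ coming from $\BC^0((\TT\times S^n)_+)$ is irrelevant for our purposes, because $i_{X_{a,b}}^*$ factors through~$i_A^*$ and $\BC^*(A)$ vanishes below degree~$n$.

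The main step is to analyse $i_{X_{a,b}}^*$ in degree $n=\ell+2h$. There it restricts the map $(\alpha,\beta)\mapsto\alpha\,u^h s_\ell$ to $\ker\varphi^* = \{(\alpha,\beta)\in\F_p^2 : \alpha\bar a+\beta\bar b=0\}$. Splitting into the four cases according to whether each of $\bar a,\bar b$ is zero shows that $\ker\varphi^*$ contains an element with non-zero first coordinate except when $\bar a\ne 0$ and $\bar b=0$, yielding $h_p(X_{a,b})=h$ in all non-exceptional cases. In the exceptional case ($\bar b=0$, $\bar a\ne 0$) the degree-$n$ map vanishes and one passes to degree $n+2$, where $\BC^{n+2}(X_{a,b};\F_p)\cong\F_p\cdot u e_A$ maps under $i_{X_{a,b}}^*$ to $u^{h+1}s_\ell$, giving $h_p(X_{a,b})=h+1$. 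The only real bookkeeping obstacle is matching the pairing conventions from~\cref{T:attaching maps} with the Borel cohomology computation so that the matrix of $\varphi^*$ comes out as $(\bar a,\bar b)$.
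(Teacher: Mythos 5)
Your approach is essentially the one the paper uses: factor $i_{X_{a,b}}^*$ through $\iota^*\colon\BC^*(X_{a,b})\to\BC^*(A)$ followed by $i_A^*$, observe that the cell attachment only affects Borel cohomology near degree~$n$, identify $\BC^n(X_{a,b};\F_p)$ with $\ker\varphi_{a,b}^*$, and check when the restriction of $i_A^*$ to that kernel is nonzero. The paper organizes this into a larger exact diagram involving the quotients $Q_A=A/A^\TT$ and $Q_X=X/X^\TT$, but the core case analysis on $(\bar a,\bar b)$ in degree $n$ is the same, so this is the same proof.

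One imprecision worth tightening: $(\TT\times S^n)_+\xrightarrow{\varphi_{a,b}}A\to X_{a,b}$ is not literally a cofiber sequence of based $\TT$--spaces. The cofiber of $A\hookrightarrow X_{a,b}$ is $\TT_+\wedge S^{n+1}$ (collapsing the free cell), not $\Sigma\big((\TT\times S^n)_+\big)$, which would carry an extra $\TT_+\wedge S^1$ wedge summand. In particular the ``extra class in degree~$1$'' you mention does not actually exist in $\BC^*(X_{a,b})$. The correct way to get $\BC^n(X_{a,b})=\ker\varphi_{a,b}^*$ is to run the long exact sequence of $A\to X_{a,b}\to\TT_+\wedge S^{n+1}$ and note that the connecting map $\BC^n(A)\to\BC^{n+1}(\TT_+\wedge S^{n+1})$ factors through $\varphi_{a,b}^*$ followed by the connecting map of the cell pair $\big((\TT\times D^{n+1})_+,(\TT\times S^n)_+\big)$, which is an isomorphism in degree $n$. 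Since you only use the degree-$n$ identification (and degrees $\ge n+2$, where both sequences agree trivially), the final conclusion is unaffected, but the intermediate claim as stated is off.
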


\begin{proof}
For brevity, we write~$X=X_{a,b}$ and~$\varphi=\varphi_{a,b}$.
Let~$\iota\colon A\hra X$ be the inclusion and~$k\ge0$.
For arbitrary coefficients, we have a commutative square
	\begin{equation*}\begin{tikzcd}
	\BC^{\ell+2k}(X) \ar[d,"i_X^*"] \ar[r,"\iota^{\ell+2k}"]&
		\BC^{\ell+2k}(A) \ar[d,"i_A^*"] \\
	\BC^{\ell+2k}(X^\TT) \ar[r,"(\iota^\TT)^{\ell+2k}","\cong"']&
		\BC^{\ell+2k}(A^\TT) 
	\end{tikzcd}\end{equation*}
The first observation is that the handle attachment can only effect the Borel cohomology in degrees~$n$ and~$n+1$.
Indeed, by construction we have a cofiber sequence $A\xra\iota X\to \TT_+\wedge S^{n+1}$ and~$\BH(\TT_+\wedge S^{n+1})$ is concentrated in degree~$n+1$.
In particular, $\iota^{\ell+2k}$ is an isomorphism for~$k\ne h$, leaving~$h$ and~$h+1$ as the only possible values of~$h_p(X)$.
To decide which of these values is attained, we embed the square into a larger diagram.
This requires a bit of preparation.

We continue to work with arbitrary~$\kk$.
Consider 
the quotients $Q_A=A/A^\TT$ and~$Q_X=X/X^\TT$ and 
the cofiber sequences
	\begin{equation}
	A^\TT \lrao{i_A} A \lrao{q_A} Q_A \AND 
	X^\TT \lrao{i_X} X \lrao{q_X} Q_X.
	\end{equation}
Since~$A$ and~$X$ have semi-free $\TT$--actions, the actions on~$Q_A$ and~$Q_X$ are free away from the base point.
We also note that~$Q_X$ is obtained from~$Q_A$ by attaching a $\TT$--free cell along~$\bar\varphi=q_A\circ\varphi$.
The cofiber sequence~\eqref{eq:cofiber seq V} gives a $\TT$--homotopy equivalence~$S^{\ell,d}/S^\ell\simeq (S(\C^h)_+\wedge S^1)\wedge S^{\ell}$ and thus
	\begin{equation}
	Q_A \homequ (S(\C^h)_+\wedge S^{\ell+1}) \vee (\TT_+\wedge S^n).
	\end{equation}
Passing to $\TT$--orbits we get a non-equivariant homotopy equivalence
	\begin{equation}
	Q_A/\TT \homequ (\CP^{h-1}_+\wedge S^{\ell+1}) \vee S^n.
	\end{equation}
By (B\ref{fact: free action}) and (B\ref{fact: free action}) the Borel cohomology given 
	\begin{equation}
	\BC^*(Q_A) 
	\cong \tH^*(Q_A/\TT)
	\cong \kk[u]/u^h\, d_{A} \oplus \kk\, \bar f_A
	\end{equation}
with $\kk[u]$--module generators $d_A\in \BC^{\ell+1}(Q_A)$ and $\bar f_A\in\BC^n(Q_A)$ satisfying
	\begin{equation}
	f_A = q_A^*\bar f_A \AND
	d_{A} = \delta_A s_A 
	\end{equation}
where $\delta_A\colon \BC^*(A^\TT)\to\BC^{*+1}(Q_A)$ is the connecting homomorphism.
Consider the following diagram whose rows and columns are exact:
	\begin{equation*}\begin{tikzcd}
	& 0 \dar & 0\dar & \\
	0 \rar &
		\BC^n(Q_X) \ar[r,"\bar \iota^*"] \ar[d,"q_X^*"] &
			\BC^n(Q_A) \ar[r,"\bar\varphi^*"] \ar[d,"q_A^*"] &
				\BC^n((\TT\times S^n)_+) \ar[d,equal]\\
	0 \rar &
		\BC^n(X) \ar[r,"\iota^*"] \ar[d,"i_X^*"] &
			\BC^n(A) \ar[r,"\varphi^*"] \ar[d,"i_A^*"] &
				\BC^n((\TT\times S^n)_+) \\
	0 \rar &
		\BC^n(X^\TT) \ar[r,"\cong"] &
			\BC^n(A^\TT) \rar & 0
				\\
	&& 0 \ar[from=u] &
	\end{tikzcd}\end{equation*}
Let~$e^n\in \BC^n((\TT\times S^n)_+)$ be the generator with $\langle e^n,e_n \rangle=1$.
Since~$i_A^*e_A=s_A$ and~$i_A^*f_A=0$, the preimage of~$s_A$ under~$i_A^*$ consists of the classes~$e_A+\kk f_A$ and by assumption we have
	\begin{equation}
	\varphi^*e_A = a_\kk\,e^n \AND 
	\varphi^*f_A = \bar \varphi^* \bar f_A = b_\kk\,e^n
	\end{equation}
where~$a_\kk, b_\kk$ are the images under the canonical map~$\Z\to\kk$.
Specializing to~$\kk=\F$ a field, we can deduce that~$i_X^*$ is surjective unless~$b_\F=0$ and~$a_\F\ne0$.
For the prime fields~$\F_p$ this is equivalent to the condition in~\eqref{eq:h-invariants of X_ab}.
\end{proof}

From here on, it is easy to find~$a,b\in\Z$ such that~$h_p(X_{a,b})$ depends non-trivially on the choice of~$p$.
For concreteness, we record:

\begin{example}\label{eg:example for h-difference}
Let~$a,b\in\N$ be distinct primes. Then \cref{T:h-invariants of X_ab} gives
	\begin{equation}
	h_p(X_{a,b}) = 
	\begin{cases}
	h+1, & p=b\\
	h, & \text{else.}
	\end{cases}
	\end{equation}
Combining this with \cref{T:basic inequalities h-invariant}(iv) gives
	\begin{equation}
	h^w(X_{a,b};\Z)=h^0(X_{a,b})=h<h+1=h^b(X_{a,b})=h^s(X_{a,b};\Z).
	\end{equation}
\end{example}

It remains to prove \cref{T:attaching maps}.

\paragraph{Proof of \cref{T:attaching maps}.}

We need a technical lemma first.
Recall that the Borel construction~$\ET\times_\TT X$ is a fiber bundle over~$\BT$ with fiber~$X$ and that a choice of~$e\in\ET$ gives rise to an embedding~$j\colon X\hra \ET\times_\TT X$.
Composing with the quotient map to~$\ET_+\wedge_\TT X$ gives a based embedding $\bar j\colon X\to \ET_+\wedge_\TT X$ which induces comparison maps for Borel and ordinary (co-)homology:
	\begin{equation}\label{eq:comparison maps}
	\bar j^*\colon \BC^*(X) \to \tH^*(X) \AND 
	\bar j_*\colon \tH_*(X) \to \BH_*(X)
	\end{equation}

\begin{lemma}\label{T:fiber inclusion}
Let~$X$ be either~$S^V$ or $\TT_+\wedge Z$ where
\begin{itemize}
\item
$V$ is an $n$--dimensional $\TT$--representation and
\item 
$Z$ is a based $(n-1)$--connected space~$Z$ with trivial $\TT$--action.
\end{itemize}
Then the comparison map $\bar j^*\colon \BC^n(X)\to \tH^n(X)$ is an isomorphism.
\end{lemma}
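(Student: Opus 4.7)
The plan is to treat the two cases for $X$ separately, reducing each to a direct computation that exploits the specific structure of $X$.

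For $X = S^V$, I would invoke the Thom isomorphism interpretation from~(B\ref{fact: suspension isomorphisms}): the reduced Borel construction $\Borr{S^V}$ is the Thom space of the vector bundle $\ET \times_\TT V \to \BT$, so an orientation of $V$ determines a Thom class $\tau \in \BC^n(S^V)$ generating $\BC^n(S^V) \cong H^0(\BT) \cong \kk$. The map $\bar j$ identifies $S^V$ with the Thom space of the fiber $V$ over $[e] \in \BT$; by naturality of the Thom construction, $\bar j^* \tau$ is the Thom class of the fiber, which generates $\tH^n(S^V) \cong \kk$. Hence $\bar j^*$ sends a generator to a generator and is an isomorphism.

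For $X = \TT_+ \wedge Z$, I would first identify the target via~(B\ref{fact: free action}): the canonical homotopy equivalence $\Borr X \simeq X/\TT$ combined with the direct computation $(\TT_+ \wedge Z)/\TT = Z$ allows one to check that $\bar j$ corresponds under this equivalence to the collapse map $c\colon \TT_+ \wedge Z \to S^0 \wedge Z = Z$ induced by the projection $\TT_+ \to S^0$. It then remains to show that $c^*\colon \tH^n(Z) \to \tH^n(\TT_+ \wedge Z)$ is an isomorphism. For this I would invoke the standard stable splitting $\Sigma \TT_+ \simeq S^1 \vee S^2$ which, after smashing with $Z$, yields $\Sigma(\TT_+ \wedge Z) \simeq \Sigma Z \vee \Sigma^2 Z$ and hence $\tH^n(\TT_+ \wedge Z) \cong \tH^n(Z) \oplus \tH^{n-1}(Z)$ in a way compatible with $c^*$. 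Since $Z$ is $(n-1)$-connected, $\tH^{n-1}(Z) = 0$ and $c^*$ is an isomorphism.

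The main obstacle is the bookkeeping in the second case: correctly tracing $\bar j$ through the chain of identifications $\Borr{(\TT_+ \wedge Z)} = \ET_+ \wedge_\TT (\TT_+ \wedge Z) \cong \ET_+ \wedge Z \simeq Z$ to recognize it as the collapse map $c$. Once this identification is in place, both cases reduce to well-known facts: naturality of the Thom isomorphism in the first case, and the stable splitting of $\TT_+$ combined with the connectivity hypothesis in the second.
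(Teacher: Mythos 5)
Your proof is correct but takes a genuinely different route from the paper's. The paper sets up a single diagram combining the Serre spectral sequence for $\Bor X\to\BT$ (which is exact in degree $n$ precisely because $X$ is non-equivariantly $(n-1)$--connected in both cases) with the split exact sequence from~(B\ref{fact: reduced vs unreduced}); it proves injectivity of~$\bar j^*$ once and for all from that diagram and then establishes surjectivity case by case---by vanishing of the Euler class of $\ET\times_\TT S^V\to\BT$ for~$S^V$, and by exhibiting a section of the map $\TT_+\wedge Z\to Z$ for the free case. You instead avoid the Serre spectral sequence entirely. For~$S^V$ you identify $\Borr{S^V}$ as the Thom space of $\ET\times_\TT V\to\BT$ (which is indeed the content of~(B\ref{fact: suspension isomorphisms}) with $X=S^0$), note that $\bar j$ is the inclusion of a fiber, and conclude by naturality of the Thom class; this is a clean one-step argument that produces an explicit generator on each side. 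For $\TT_+\wedge Z$ you identify $\bar j^*$ with $c^*$ for the collapse $c\colon\TT_+\wedge Z\to Z$---exactly the computation the paper makes, $g\wedge z\mapsto z$---and then, rather than only using the section to get surjectivity, you compute $\tH^n(\TT_+\wedge Z)\cong\tH^n(Z)\oplus\tH^{n-1}(Z)$ from the splitting $\Sigma\TT_+\simeq S^1\vee S^2$ (this is in fact an unstable equivalence, not merely a stable one) and invoke connectedness to kill the second summand. The trade-off: the paper's route handles both cases in a uniform framework at the cost of a spectral sequence argument, while your route is more computational and self-contained, at the cost of two separate ad hoc identifications. Both are complete proofs.
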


\begin{proof}
Both $S^V$ and~$\TT_+\wedge Z$ are non-equivariantly $(n-1)$--connected.
The Serre spectral sequence from~(B\ref{fact: Serre spectral sequence}) for~$X$ thus gives the exactness of the middle row in the following commutative diagram:
	\begin{equation}\label{eq:fiber inclusion diagram}
	\begin{tikzcd}[column sep=2em]
	&& H^n(\Borr X) \ar[d,"q^*"] \ar[dr,"\bar j^*"] && \\
	0\rar &
		H^n(\BT) \rar{p^*} &
			H^n(\Bor X) \rar{j^*} \dar{s^*}&
				H^n(X) \rar{d_{n+1}} &
					H^{n+1}(\BT) \\
	&& H^n(\BT)  \ar[ul,equal] && 
	\end{tikzcd}\end{equation}
The map~$d_{n+1}$ is the edge differential
	$H^n(X) \cong E_{n+1}^{0,n} \ra E_{n+1}^{n+1,0}\cong H^{n+1}(\BT)$.

If~$X=S^V$, this is the transgression of the oriented sphere bundle $\ET\times_\TT S^V$ over~$\BT$ whose image is generated by the Euler class (cf.~\cite{Rudyak_Thom_spectra_etc_1998}*{Prop.~V.1.26}).
Since the bundle has a section, its Euler class vanishes, and so does~$d_{n+1}$.
Thus~$j^*$ is surjective and the splittings of the horizontal and vertical sequences given by~$s^*$ and~$p_*$ show that~$\bar j^*$ is an isomorphism.

For~$X=\TT_+\wedge Z$ we consider the composition
	\begin{equation*}\begin{tikzcd}[row sep=small]
	\TT_+\wedge Z \ar[r,"\bar j"] &
		\Borr{(\TT_+\wedge Z)} \ar[r,"\simeq"] &
			(\TT_+\wedge Z)/\TT \ar[r,"\cong"] &
				Z \\
	g\wedge z \ar[r,maps to] &
		{[e\wedge g\wedge z]} \ar[r,maps to] &
			{[g\wedge z]} \ar[r,maps to] &
				g\inv z=z
	\end{tikzcd}\end{equation*}
Since~$\TT$ acts freely on~$\TT_+\wedge Z$ away from the base point, the middle map is a homotopy equivalence by~(B\ref{fact: free action}) and the map on the right is clearly a homeomorphism.
The resulting map~$\TT_+\wedge Z\to Z$ has a section given by $z\mapsto 1\wedge z$ so that~$\bar j^*$ is split surjective.
Lastly, $q^*$ is split injective by~(B\ref{fact: reduced vs unreduced}), and~\eqref{eq:fiber inclusion diagram} shows that $j^*$ is injective on the image of~$q^*$.
It follows that~$\bar j^*$ is an isomorphism.
The statement for~$\bar j_*$ can be proved along the same lines.
\end{proof}

\begin{proof}[Proof of \cref{T:attaching maps}]
We first argue that the map $\varphi\mapsto \varphi_*e_n$ factors as a series of bijections
	\begin{equation}\label{eq:bijections for attachment}
	[(\TT\times S^n)_+,A]^\TT
	\cong [S^n_+,A]
	\cong \pi_n(A)
	\cong \tH_n(A;\Z)
	\cong \BH_n(A;\Z).
	\end{equation}
From left to right, the maps are
	a standard adjunction (e.g.~\cite{tomDieck_transformation_groups_1987}*{I.(4.9)}),
	the forgetful map from~$\pi_n(A)$ to unbased homotopy classes,
	the Hurewicz map, and
	the comparison map~$\bar j_*\colon\tH_n(A)\to \BH_n(A)$ from~\eqref{eq:comparison maps}.
The adjunction is always bijective. 
So are the forgetful and Hurewicz maps, since~$A$ is non-equivariantly $(n-1)$--connected with~$n\ge 2$.
Lastly, $\bar j_*$~is an isomorphism by \cref{T:fiber inclusion} and the additivity of Borel homology are additive under wedge sums.
Unraveling the definitions shows that the composition in~\eqref{eq:bijections for attachment} is indeed the map~$\varphi\mapsto\varphi_*e_n$.

A similar computation as that of~$\BC^*(A;\Z)$ shows that~$\BH_*(A;\Z)$ is free over~$\Z$ in each degree and~$\BH_n(A)\cong\Z\oplus\Z$.
The lemma now follows from the standard universal coefficient theorem for~$\ET_+\wedge_\TT A$ and the fact that~$e_A$ and $f_A$ generate~$\BC^n(A;\Z)$.
\end{proof}

\section{Duality, additivity, and monotonicity}
\label{ch:additivity etc}

In this section we derive several properties of the topological $h$--invariants from which are analogues of the properties of the monopole $h$--invariants listed in \cref{T:Froyshov intro,T:additivity etc intro}.

\subsection{A description in terms of Tate cohomology}

\label{ch:Tate etc}
While the elementary definition of the topological $h$--invariants in terms of fixed points has its merits, it is often more convenient to use a different description in terms of Borel and Tate (co-)homology as defined by Greenlees and May in~\cite{GreenleesMay_equivariant_Tate_1995}.
This requires some familiarity with equivariant stable homotopy theory and, through~\cite{GreenleesMay_equivariant_Tate_1995}, implicitly makes use of the genuine stable $\TT$--equivariant homotopy category of spectra developed in~\cite{LewisMaySteinberger_equivariant_stable_homotopy_1986}.
Throughout, $X$ will be a finite $\TT$--complex and we use the same letter for its suspension spectrum.

\paragraph{Borel, Tate, and coBorel theories.}
Let~$\kk$ be a principal ideal domain and~$\Hk$ its non-equivariant Eilenberg--Mac~Lane spectrum, considered as a \mbox{$\TT$--spectrum} with trivial action.
Following~\cite{GreenleesMay_equivariant_Tate_1995}*{p.~2~ff.}, we consider the associated $\TT$--spectra
	\begin{equation}\label{eq:FCT definition}
	F = F(E\TT_+,\Hk) \wedge E\TT_+,\quad
	C = F(E\TT_+,\Hk),\quad
	T = F(E\TT_+,\Hk)\wedge \widetilde{E\TT}
	\end{equation}
which are related by a cofiber sequence of $\TT$--spectra
	\begin{equation}\label{eq:FCT cofiber sequence}
	F\lrao{\mu} C \lrao{\lambda}  T \lrao{\delta} F\wedge S^1\lra \cdots 
	\end{equation}
Recall that every $\TT$--spectrum~$E$ represents $\TT$--equivariant homology and cohomology theories which are defined as
	\begin{equation}
	E_nX = [S^n,E\wedge X]^\TT \AND 
	E^nX = [X,E\wedge S^n]^\TT,\quad n\in\Z
	\end{equation}
where~$[\,,\,]^\TT$ indicates morphism sets in the stable homotopy category.
According to \cite{GreenleesMay_equivariant_Tate_1995}*{Prop.~2.1}, the $\TT$--spectra~$C$ and~$F$ represent $\TT$--equivariant Borel cohomology and homology, respectively, in the sense that there are natural isomorphisms
	\begin{equation}\label{eq:representing Borel}
	C^nX     \cong \BC^n(X;\kk) \AND 
	F_{n+1}X \cong \BH_n(X;\kk).
	\end{equation}
The degree shift on the right hand side occurs, because the isomorphism involves a suspension by adjoint representation~$Ad(\TT)\cong\R$.
The theories represented by~$T$ are called \emph{Tate (co-)homology} while $C_*X$ and~$F^*X$ are called \emph{coBorel homology} and~\emph{cohomology}, respectively.
Combining the notation from~\cites{Manolescu_triangulaion_JAMS_2016,LidmanManolescu_equivalance_2018} and the grading conventions from~\cite{GreenleesMay_equivariant_Tate_1995}, we define
	\begin{equation}\begin{split}
	T^nX = \tBC^n(X;\kk), \hspace{15mm} & T_nX = \tBH_{n-1}(X;\kk), \\
	F^nX = \cBC^n(X;\kk), \hspace{15mm} & C_nX = \cBH_{n-1}(X;\kk).
	\end{split}\end{equation}

\begin{remark}\label{R:ring spectra}
All these theories take values in $\kk[u]$--modules.
Indeed, since~$\Hk$ is a commutative ring spectrum, $C$ and~$T$ are ring $\TT$--spectra and~$\lambda$ is a map of ring $\TT$--spectra (see \cite{GreenleesMay_equivariant_Tate_1995}*{Prop.~3.5}).
Along the same lines, one can show that~$F$ is a $C$--module $\TT$--spectrum and that~\eqref{eq:FCT cofiber sequence} is a sequence of $C$--module $\TT$--spectra.
In particular, the represented (co-)homology theories are naturally modules over~$C^*S^0\cong H^*(\BT;\kk)\cong\kk[u]$.
\end{remark}

\paragraph{Localization.}

Alternatively, the Tate theories~$T^*X$ and~$T_*X$ can be obtained by localizing the $\kk[u]$--modules~$C^*X$ and~$C_*X$ away from~$u$ (cf.~\cite{GreenleesMay_equivariant_Tate_1995}*{Ch.~16}).
This is most transparent for cohomology and we the following statement.

\begin{proposition}[Localization theorem]\label{T:Localization}
Let~$X$ be a finite $\TT$--complex.
\begin{enumerate}[(i)]
\item
There are natural isomorphisms
	\begin{equation}
	\tBC^*(X;\kk) \cong u\inv\BC^*(X;\kk) \AND 
	\tBH_*(X;\kk) \cong u\inv\cBH_*(X;\kk)
	\end{equation}
where~$u\inv$ indicates the localization of $\kk[u]$--modules at the multiplicative subset~$\{1,u,u^2,\dots\}$.
\item
If~$X$ is semi-free, then the fixed point inclusion $i_X\colon X^\TT\hra X$ induces an isomorphism~$\tBC^*(X)\xra{\cong} \tBC^*(X^\TT)$ and thus
	\begin{equation}
	\tBC^*(X) \cong \tH^*(X^\TT;\kk)\otimes_\kk \kk[u,u\inv].
	\end{equation}
The same holds for~$\tBH_*(X)$.
\end{enumerate}
\end{proposition}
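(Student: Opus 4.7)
The plan is to exploit the model $\widetilde{E\TT} \simeq \colim_k S^{k\C}$ in the $\TT$--equivariant stable category, with bonding maps induced by the standard inclusions $\C^k \hookrightarrow \C^{k+1}$. This realizes $\widetilde{E\TT}$ as the object which inverts the Euler class $u$ of the standard $\TT$--representation $\C$ in the sense of a Bousfield-type localization. Part (i) is then a formal consequence, and part (ii) reduces to the vanishing of Tate theories on $\TT$--free spaces.

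For part (i) in cohomology, I would write
\[
\tBC^n(X) \;=\; T^n X \;=\; [X,\, C \wedge \widetilde{E\TT} \wedge S^n]^\TT \;\cong\; \bigl[X,\, \colim_k\, C \wedge S^{n+k\C}\bigr]^\TT.
\]
Since $X$ is a finite $\TT$--complex, the functor $[X,-]^\TT$ commutes with sequential homotopy colimits, giving
\[
\tBC^n(X) \;\cong\; \colim_k\, [X,\, C \wedge S^{n+k\C}]^\TT \;\cong\; \colim_k\, \BC^{n+2k}(X)
\]
via the Thom/suspension isomorphism of (B\ref{fact: suspension isomorphisms}) applied to the trivial bundle with fiber $\C^k$. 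The bonding maps correspond under these identifications to multiplication by the Euler class $u$, so the colimit is exactly $u\inv\BC^n(X)$. The homological assertion is proved analogously from $T \wedge X \simeq \colim_k C \wedge S^{k\C} \wedge X$, using that $\TT$--equivariant stable homotopy groups commute with sequential colimits and that the suspension isomorphism identifies the colimit system with the $u$--localization of $\cBH_{*-1}(X)$, i.e.\ with $\tBH_{*-1}(X)$.

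For part (ii), the key point is that $\widetilde{E\TT} \wedge Z \simeq *$ as a $\TT$--space whenever $Z$ is based and $\TT$--free away from its basepoint. Smashing commutes with $H$--fixed points, so it suffices to check that $\widetilde{E\TT}^H \wedge Z^H \simeq *$ for every closed $H \subseteq \TT$. If $H \neq \{1\}$ then $Z^H = *$ since $Z$ is free away from the basepoint; if $H = \{1\}$ then $\widetilde{E\TT}^{\{1\}} = \widetilde{E\TT}$ is non-equivariantly contractible, because $E\TT \to *$ is a non-equivariant equivalence. Applying this to $Z = X/X^\TT$, which inherits freeness from the semi-freeness of $X$, yields $\tBC^*(X/X^\TT) = \tBH_*(X/X^\TT) = 0$. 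The long exact sequences for $X^\TT \hookrightarrow X \to X/X^\TT$ then force $i_X^*$ to induce isomorphisms on Tate cohomology and homology. Finally, to obtain the closed formula, apply part (i) to $X^\TT$: since $\TT$ acts trivially on $X^\TT$, property (B\ref{fact: trivial action}) gives $\BC^*(X^\TT) \cong \kk[u] \otimes_\kk \tH^*(X^\TT)$, and inverting $u$ yields $\tBC^*(X^\TT) \cong \kk[u,u\inv] \otimes_\kk \tH^*(X^\TT)$; the same argument applied to Borel homology handles the $\tBH_*$--statement.

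The main obstacle is justifying that the bonding maps of the sequential colimit correspond to multiplication by the Euler class $u$ under the Thom isomorphism. This amounts to checking that the inclusion $S^{k\C} \hookrightarrow S^{(k+1)\C}$ induces cup product with the Euler class of the additional $\C$--summand, which is naturality of the Thom class under the stabilization; once this identification is in place, everything else reduces to a routine combination of (B\ref{fact: suspension isomorphisms}), (B\ref{fact: trivial action}), and the $H$--fixed point computation of $\widetilde{E\TT}$.
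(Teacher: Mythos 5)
Your proof is correct and follows the same overall route as the paper; the difference is that where the paper simply cites Greenlees--May (their Corollary~16.7 for the $u^{-1}$--localization description in part~(i), and their Proposition~2.4 for the vanishing of the Tate theories on free spaces in part~(ii)), you unpack those citations into explicit arguments. Your colimit model $\widetilde{E\TT}\simeq\colim_k S^{k\C}$ together with compactness of the finite $\TT$--complex $X$ and the Thom-class bookkeeping is precisely the content behind the Greenlees--May localization statement, and your fixed-point computation of $\widetilde{E\TT}\wedge Z\simeq*$ is the geometric heart of their Proposition~2.4. The one spot that deserves a little more care: $\widetilde{E\TT}\wedge Z\simeq*$ immediately kills $\tBH_*(Z)$, but it does not \emph{directly} kill $\tBC^*(Z)=[Z,\,T\wedge S^*]^\TT$, since cohomology maps into the smash factor rather than out of it. You can close this gap either by appealing to Spanier--Whitehead dualizability of the finite complex $Z=X/X^\TT$ (whose dual is again free away from the basepoint), or more simply by invoking your own part~(i): $\tBC^*(Z)\cong u^{-1}\BC^*(Z)$, and since $\BC^*(Z)\cong\tH^*(Z/\TT)$ is concentrated in finitely many degrees by (B\ref{fact: free action}), every element is $u$--torsion and the localization vanishes. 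With that adjustment, the argument is complete and is essentially the expanded form of the paper's citation proof.
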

\begin{proof}
The isomorphisms in~(i) are special cases of~\cite{GreenleesMay_equivariant_Tate_1995}*{Cor.~16.7 and~16.7}, since~$\TT$ acts freely on the unit sphere of~$\C$ and~$u\in H^2(\BT)$ is the Euler class of the vector bundle~$\ET\times_\TT\C\to\BT$.
To establish~(ii), we use the cofiber sequence~$X^\TT\xra{i_X}X\xra{q_X} X/X^\TT$ and its long exact sequence in Tate cohomology.
Since~$X$ is semi--free, $X/X^\TT$ is free in the based sense so that~$\tBC^*(X/X^\TT)$ by~\cite{GreenleesMay_equivariant_Tate_1995}*{Proposition~2.4}.
\end{proof}

As a consequence, we obtain an alternative description of the topological $h$--invariants.

\begin{corollary}\label{T:h via Tate}
Let~$X$ be a $\TT$--space of type SWF with~$X^\TT\simeq S^\ell$.
Then
	\begin{align}
	h^w(X;\Bbbk) 
	&= \min\Set{k\in\Z}{\text{$\tilde{H}^{\ell+2k}_\TT(X)\xra{\lambda^*} t\tilde{H}^{\ell+2k}_\TT(X)$ is non-zero}} \label{eq:h_w via Tate}
	\\[1em]
	h^s(X;\Bbbk) 
	&= \min\Set{k\in\Z}{\text{$\tilde{H}^{\ell+2k}_\TT(X)\xra{\lambda^*} t\tilde{H}^{\ell+2k}_\TT(X)$ is surjective}} \label{eq:h_s via Tate}
	\end{align}
\end{corollary}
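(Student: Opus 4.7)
The approach is to combine naturality of $\lambda^*$ with the localization isomorphism from Proposition 5.1(ii). Since a space of type SWF is semi-free, that proposition gives an isomorphism $i_X^* \colon \tBC^*(X) \xrightarrow{\cong} \tBC^*(X^\TT)$, and naturality of $\lambda^*$ (with respect to $i_X$) produces the commutative square
\begin{equation*}
\begin{tikzcd}
\BC^*(X) \rar{\lambda^*} \dar{i_X^*} & \tBC^*(X) \dar{\cong} \\
\BC^*(X^\TT) \rar{\lambda^*} & \tBC^*(X^\TT)
\end{tikzcd}
\end{equation*}
in which the right vertical arrow is an isomorphism.

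Next I would identify the bottom row explicitly. Since $X^\TT \simeq S^\ell$, Lemma 4.1(ii) gives $\BC^*(X^\TT) \cong \kk[u]\,s_X$ with $s_X$ of degree $\ell$, while Proposition 5.1(i) yields $\tBC^*(X^\TT) \cong \kk[u, u^{-1}]\,s_X$ so that the bottom $\lambda^*$ becomes the canonical inclusion of $\kk[u]$ into its $u$-localization. In degree $\ell + 2k$ this inclusion is an isomorphism $\kk \to \kk$ for $k \geq 0$ and the trivial map $0 \to \kk$ for $k < 0$.

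The last step is a small diagram chase. For $k < 0$ the composite $i_X^* \circ \lambda^*$ on $\BC^{\ell+2k}(X)$ equals $\lambda^* \circ i_X^* = 0$, and injectivity of the right $i_X^*$ forces the top $\lambda^*$ to vanish. For $k \geq 0$ the bottom $\lambda^*$ and the right $i_X^*$ are both isomorphisms, so the top $\lambda^*$ is non-zero (respectively surjective) on $\BC^{\ell+2k}(X)$ if and only if the left $i_X^*$ is. Comparing with Definition 4.2 then yields \eqref{eq:h_w via Tate} and \eqref{eq:h_s via Tate}.

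I do not anticipate any serious obstacle: the corollary is essentially the statement that $\lambda^*$ and $i_X^*$ detect the same behavior on the free part of $\BC^*(X)$, which becomes transparent on both sides of the square after localizing at $u$. The only point requiring a bit of care is confirming that, under the identification of Tate cohomology from Proposition 5.1, the comparison map $\lambda^*$ on $X^\TT$ really is the canonical localization map — but this is built into the construction of the cofiber sequence \eqref{eq:FCT cofiber sequence}, and can be checked by unpacking the definitions of $F$, $C$ and $T$.
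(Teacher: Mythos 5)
Your proposal is correct and follows essentially the same route as the paper: both use the commutative square furnished by \cref{T:Localization}, with $i_X^*\colon t\tilde H^*_\TT(X)\to t\tilde H^*_\TT(X^\TT)$ an isomorphism, identify the bottom $\lambda^*$ as the localization map $\kk[u]\to\kk[u,u^{-1}]$ (trivial for $k<0$, an isomorphism for $k\geq0$), and conclude by a short diagram chase.
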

\begin{proof}
\cref{T:Localization} gives a commutative diagram with isomorphisms as indicated:
\begin{equation}\begin{tikzcd}
	\tilde{H}^*_\TT(X) \ar[d,"i_X^*"] \ar[r,"\lambda^*"] &
		t\tilde{H}^*_\TT(X) \ar[d,"i_X^*","\cong"'] \ar[r,"\cong"] &
			u\inv\tilde{H}^*_\TT(X) \ar[d,"i_X^*","\cong"'] \\
	\tilde{H}^*_\TT(X^\TT) \ar[r,"\lambda^*"] &
		t\tilde{H}^*_\TT(X^\TT) \ar[r,"\cong"] &
			u\inv\tilde{H}^*_\TT(X^\TT) 
	\end{tikzcd}\end{equation}
First observe that $\lambda^*\colon \BC^{\ell+2k}(X^\TT)\to t\BC^{\ell+2k}(X^\TT)$ is necessarily trivial for~$k<0$ and an isomorphism for~$k\ge0$.
For $\xi\in \tilde{H}^{\ell+2k}_\TT(X)$ it follows that~$\lambda^*\xi\ne 0$ if and only if~$i_X^*\xi\ne0$, in which case we must have~$k\ge0$.
This shows that the minimum in~\eqref{eq:h_w via Tate} is attained and, in fact, agrees with~$h^w(X;\kk)$.
Similarly, $\lambda^*\xi$ is a generator if and only if~$i_X^*\xi\ne0$ is a generator which gives~\eqref{eq:h_s via Tate}.
\end{proof}

\subsection{Spanier--Whitehead duality}

We discuss two types of duality.
The first is a geometric nature while the second follows formally from the multiplicative structures of the $\TT$--spectra~$F$, $C$, and~$T$.

\begin{definition}[cf.~\cite{May_equivariant_homotopy_1996}*{\S~XVI.8}]\label{D:V-duality}
Based $\TT$--spaces~$X$ and~$X^*$ are \emph{(Spanier--Whitehead) $V$--dual} \wrt a $\TT$--representation~$V$ if there are based $\TT$--maps
	\begin{equation}
	\epsilon\colon X^*\wedge X \to S^V \AND
	\eta\colon S^V\to X\wedge X^*
	\end{equation}
such that the compositions
	\begin{equation}\label{eq:duality maps}\begin{split}
	S^V\wedge X \xra{\eta\wedge1_X}
	X\wedge X^*\wedge &X \xra{1_X\wedge\epsilon}
	X\wedge S^V \\
	X^*\wedge S^V \xra{1_{X^*}\wedge\eta}
	X^*\wedge X\wedge &X^* \xra{\epsilon\wedge1_{X^*}}
	X^*\wedge S^V	\end{split}\end{equation}
are based $\TT$--homotopic to the factor exchange map antipodal map on~$S^V$.
\end{definition}

One can show that if~$X^*_i$ are~$V_i$--duals for the same~$X$, then~$X^*_1\wedge S^{V_2}$ and~$X^*_2\wedge S^{V_1}$ become isomorphic in the stable homotopy category (cf.~\cite{May_equivariant_homotopy_1996}*{Chs.~XVI.7--8}).
Put differently, $V$--duals -- if they exist -- are stably unique in the above sense.
As for existence, we record the following:

\begin{lemma}\label{T:type SWF duals}
Let~$X$ be a $\TT$--space of type SWF.
Then there exists another $\TT$--space of type~SWF and a $\TT$--representation of the form~$V\cong\R^k\oplus\C^m$ such that~$X$ and~$X^*$ are $V$--dual.
\end{lemma}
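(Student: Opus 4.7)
The plan is to realize $X^*$ as a Spanier--Whitehead complement of an equivariant embedding of $X$ into a representation sphere $S^V$ with $V\cong\R^k\oplus\C^m$, and then to check that the resulting space retains the SWF properties. By \cref{D:SWF spaces intro}(a) we may assume $X$ is a finite $\TT$--CW complex with fixed subcomplex $X^\TT$.

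First I would construct the equivariant embedding $X\hookrightarrow S^V$. The fixed subcomplex $X^\TT$ embeds into some $\R^k$ non-equivariantly by standard CW-complex embedding theory. On the complement $X\setminus X^\TT$, the $\TT$--action is free and the orbit space is a compact CW complex, so the principal $\TT$--bundle $X\setminus X^\TT\to(X\setminus X^\TT)/\TT$ is classified by a map into some $\CP^{m-1}$; pulling back a trivializing system and using an invariant partition of unity produces a nowhere-zero $\TT$--equivariant map $X\setminus X^\TT\to\C^m\setminus0$ where $\TT$ acts by standard multiplication. Combining these and extending by the base point yields a based $\TT$--equivariant embedding $X\hookrightarrow S^V$ for $V=\R^k\oplus\C^m$, chosen large enough to be injective.

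Next I would use $\TT$--equivariant regular neighborhood theory (equivalently, apply the $\TT$--equivariant smoothing/simplicial approximation machinery to get a $\TT$--invariant open neighborhood $N$ of $X$ in $S^V$ that $\TT$--deformation retracts onto $X$) and set
\[
X^*=S^V/\overline{S^V\setminus N}.
\]
The evaluation $\epsilon\colon X^*\wedge X\to S^V$ is induced by the inclusion $X\subset N\subset S^V$ composed with the quotient, and the coevaluation $\eta\colon S^V\to X\wedge X^*$ is built from an equivariant Alexander/diagonal construction on $N$. That the pair $(\epsilon,\eta)$ satisfies the two triangle identities from \cref{D:V-duality} is the content of equivariant Spanier--Whitehead duality as developed in \cite{LewisMaySteinberger_equivariant_stable_homotopy_1986} and \cite{May_equivariant_homotopy_1996}*{Ch.~XVI}, applied to our embedding.

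Finally I would verify that $X^*$ is again of type SWF. The finiteness of the $\TT$--CW structure on $X^*$ follows from that of the pair $(S^V, N)$. Semi-freeness of the action on $X^*$ is inherited: the isotropy groups appearing in $V=\R^k\oplus\C^m$ are only $\TT$ (on $\R^k$) and the trivial group (on the free part of $\C^m\setminus 0$), and passing to the open complement and collapsing preserves this dichotomy. For the fixed-point sphere, one uses that the geometric fixed point functor is strong symmetric monoidal, so $V$--duality in the $\TT$--equivariant category restricts on fixed points to $V^\TT$--duality in the non-equivariant category; hence $(X^*)^\TT$ is the $\R^k$--Spanier--Whitehead dual of $X^\TT\simeq S^\ell$, so $(X^*)^\TT\homequ S^{k-\ell-1}$. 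The main obstacle is the first step: the standard equivariant Mostow--Palais embedding theorem produces an embedding into \emph{some} $\TT$--representation, but to force the summand form $\R^k\oplus\C^m$ one must exploit semi-freeness of $X$ to rule out higher-weight complex summands, which is where the explicit construction of the nowhere-zero map into $\C^m\setminus0$ via the orbit space enters.
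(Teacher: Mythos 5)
Your strategy—equivariantly embed $X$ into a representation sphere $S^V$ with $V\cong\R^k\oplus\C^m$ and form the dual as a Thom-space/complement quotient via equivariant Atiyah--Spanier--Whitehead duality—is a genuine alternative to the route the paper takes. The paper works cell-by-cell: writing $X$ as a finite $\TT$--complex whose cells are either fixed (of the form $D^n$) or free (of the form $\TT_+\wedge D^n$), it identifies the $0$--dual of $S^0$ (itself) and the $\C$--dual of $\TT_+$ (namely $\TT_+\wedge S^1$, via Atiyah duality for the embedding $\TT\hookrightarrow\C$), and assembles $X^*$ by dualizing the skeletal filtration. The cell-by-cell method is bookkeeping-friendly, stays within finite $\TT$--complexes automatically, and dodges basepoint subtleties; your global-embedding route is closer in spirit to the non-equivariant Alexander/Atiyah picture and makes the role of $\R^k\oplus\C^m$ geometrically transparent. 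Both are legitimate.

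That said, two points need fixing. First, your computation of the dual fixed-point sphere is off by one: since geometric fixed points are strong symmetric monoidal, $X^\TT$ and $(X^*)^\TT$ are $\R^k$--dual, and the $\R^k$--Spanier--Whitehead dual of $S^\ell$ is $S^{k-\ell}$ (so that $S^{\ell}\wedge S^{k-\ell}=S^{\R^k}$), not $S^{k-\ell-1}$ (which is what \emph{Alexander} duality inside $S^k$ would give). This agrees with \cref{T:duality maps type SWF}, which derives $\ell+\ell^*=k$ from the triangle identities at the fixed-point level. Second, the embedding step is stated too casually: combining an $\R^k$--embedding of $X^\TT$ with a nowhere-zero $\TT$--map $X\setminus X^\TT\to\C^m\setminus 0$ does not in itself produce an injective map on all of $X$, since nothing separates points of $X\setminus X^\TT$ lying in the same $\TT$--orbit unless you also control the $\R^k$--component there (e.g.\ by combining a classifying map $X\setminus X^\TT\to S(\C^m)$ with a non-equivariant embedding of the compact orbit space $(X\setminus X^\TT)/\TT$ into $\R^k$, for $k,m$ large). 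There is also a basepoint subtlety: the quotient $S^V/\overline{S^V\setminus N}$ yields the dual of $K_+$ for an \emph{unbased} compact $K$; for a based finite $\TT$--complex $X$ one must either embed $X\setminus\{*\}$ appropriately or pass to a cofiber argument, which is exactly what the cell-by-cell approach sidesteps.
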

\begin{proof}
This can be proved by replacing~$X$ with a finite $\TT$--complex and building~$X^*$ and~$V$ cell by cell.
The key is to identify suitable duals for the $\TT$--spaces~$\TT_+$ and~$(\TT/\TT)_+\cong S^0$ corresponding to free orbits and fixed points, respectively.
Now $S^0$ is trivially $0$--dual to itself.
For~$\TT_+$ we uses the canonical embedding $\TT\hra \C$ and whose normal bundle~$\nu$ is canonically trivialized.
Atiyah duality~\cite{May_equivariant_homotopy_1996}*{Ch.~XVI, Thm.~8.1} then shows that~$\TT_+$ and~$T\nu\cong \TT_+\wedge S^1$ are $\C$--dual.
\end{proof}

For later reference, we record the following fact about the duality maps for $\TT$--spaces of type SWF.

\begin{lemma}\label{T:duality maps type SWF}
Let~$X$ and~$X^*$ be $V$--dual $\TT$--spaces with~$V\cong\R^k\oplus\C^m$ with duality maps~$\eta$ and~$\epsilon$.
If~$X^\TT\simeq S^\ell$, then~$(X^*)^\TT\simeq S^{k-\ell}$.
Moreover, the fixed point maps~$\eta^\TT$ and~$\epsilon^\TT$ are homotopy equivalences.
\end{lemma}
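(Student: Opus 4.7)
The plan is to apply the $\TT$--fixed-point functor $(-)^\TT$ to the duality data and then invoke that $X^*$ is itself of type~SWF by construction.

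First I would observe that taking fixed points commutes with smash products of based $\TT$--spaces, i.e.\ $(X\wedge Y)^\TT \cong X^\TT\wedge Y^\TT$, since the basepoint is $\TT$--fixed. For $V\cong\R^k\oplus\C^m$ we have $V^\TT=\R^k$ (the $\TT$--action on $\C^m$ has no non-zero fixed vectors), so $(S^V)^\TT=S^k$. Applying $(-)^\TT$ to \cref{D:V-duality} then produces based maps
\[
\epsilon^\TT\colon (X^*)^\TT\wedge X^\TT\to S^k \AND
\eta^\TT\colon S^k\to X^\TT\wedge(X^*)^\TT
\]
satisfying the analogous composition identities (up to based homotopy). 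In other words, $X^\TT$ and $(X^*)^\TT$ become $\R^k$--dual in the non-equivariant based sense.

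Since $X^*$ is again of type~SWF (as built in \cref{T:type SWF duals}), its fixed-point set is already a sphere up to homotopy: $(X^*)^\TT\simeq S^{\ell^*}$ for some $\ell^*$. To identify $\ell^*=k-\ell$, I would compute on reduced rational homology. The composition
\[
S^k\wedge X^\TT \xrightarrow{\eta^\TT\wedge 1}X^\TT\wedge (X^*)^\TT\wedge X^\TT \xrightarrow{1\wedge \epsilon^\TT} X^\TT\wedge S^k
\]
from~\eqref{eq:duality maps} is a self-homotopy-equivalence of $S^{k+\ell}$ and is therefore non-zero on $\tilde H_{k+\ell}(-;\Q)$. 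As this composition factors through $X^\TT\wedge(X^*)^\TT\wedge X^\TT$, whose only non-vanishing reduced rational homology lies in degree $2\ell+\ell^*$, we must have $2\ell+\ell^*=k+\ell$, i.e.\ $\ell^*=k-\ell$.

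For the remaining claim, the composition above has degree $\pm1$ on $\tilde H_{k+\ell}(S^{k+\ell};\Z)\cong\Z$; since it factors as $(\eta^\TT\wedge 1_{X^\TT})_*$ followed by $(1_{X^\TT}\wedge\epsilon^\TT)_*$, each factor must already be of degree $\pm1$ on top integral reduced homology. After the identification $X^\TT\wedge(X^*)^\TT\simeq S^\ell\wedge S^{k-\ell}\simeq S^k$, the maps $\eta^\TT$ and $\epsilon^\TT$ become self-maps of $S^k$ of degree $\pm1$, and hence homotopy equivalences by Whitehead's theorem. The main conceptual step is the rational-homology bookkeeping that pins down $\ell^*$; once that is in place, the rest is a standard degree count.
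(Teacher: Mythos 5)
Your proof is correct and follows essentially the same route as the paper: pass the duality compositions to $\TT$--fixed points, observe the result is a homotopy equivalence of spheres factoring through $X^\TT\wedge(X^*)^\TT\wedge X^\TT$, deduce $\ell^*=k-\ell$ from a dimension/degree count, and then conclude $\deg(\eta^\TT)=\deg(\epsilon^\TT)=\pm1$. The only expository difference is that you spell out the dimension bookkeeping via reduced rational homology, whereas the paper states it more tersely; the underlying argument is identical.
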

\begin{proof}
The compositions in \eqref{eq:duality maps} are $\TT$--homotopy equivalences by assumption.
Passing to fixed points gives a homotopy equivalence
	\begin{equation}\label{eq:duality maps fixed}
	S^k\wedge X^\TT \xra{\eta^\TT\wedge1}
	X^\TT\wedge (X^*)^\TT\wedge X^\TT \xra{1\wedge\epsilon^\TT}
	X^\TT\wedge S^k.
	\end{equation}
Since~$X^\TT\simeq S^\ell$ and~$(X^*)^\TT\simeq S^{\ell^*}$ for some~$\ell,\ell^*\ge0$, we must have~$k=\ell+\ell^*$ so that $\eta^\TT\wedge1$ and~$1\wedge\epsilon^\TT$ are maps between spheres of the same dimensions.
After fixing orientations, we have
	\begin{equation}
	\pm1 = \deg(1\wedge\epsilon^\TT)\deg(\eta^\TT\wedge1) = \deg(\epsilon^\TT)\deg(\eta^\TT)
	\end{equation}
which shows that~$\eta^\TT$ and~$\epsilon^\TT$ have degree~$\pm1$.
\end{proof}

As an immediate consequence, we get the following:

\begin{corollary}\label{T:SW duality for maps}
Let~$f\colon X_1\to X_2$ be a $\TT$--map between $\TT$--spaces of type SWF with~$X_1^\TT\simeq X_2^\TT$.
Further, let $X_j^*$ be~$V_j$--dual to~$X_j$ with~$V_j\cong\R^{k_j}\oplus\C^{m_j}$ for~$j=1,2$.
Consider the composition
	\begin{equation*}
	f^*\colon 
	X_2^*\wedge S^{V_1} \xra{1\wedge\eta_1} 
	X_2^*\wedge X_1 \wedge X_1^* \xra{1\wedge f\wedge1}
	X_2^*\wedge X_2 \wedge X_1^* \xra{\epsilon_2\wedge 1}
	S^{V_2}\wedge X_1^*.
	\end{equation*}
Then~$(f^*)^\TT$ is a map between spheres of the same dimensions and, after fixing orientations, has the same degree as~$f^\TT$ up to sign.
\end{corollary}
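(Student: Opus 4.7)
The plan is to take $\TT$--fixed points of each map in the composition defining $f^*$ and then apply \cref{T:duality maps type SWF} to the duality maps $\eta_1$ and $\epsilon_2$. Since taking fixed points commutes with smash products of based $\TT$--spaces (a pair is $\TT$--fixed exactly when both factors are), this yields
\begin{equation*}
(X_2^*)^\TT \wedge S^{k_1} \xra{1\wedge \eta_1^\TT}
(X_2^*)^\TT \wedge X_1^\TT \wedge (X_1^*)^\TT \xra{1\wedge f^\TT\wedge 1}
(X_2^*)^\TT \wedge X_2^\TT \wedge (X_1^*)^\TT \xra{\epsilon_2^\TT\wedge 1}
S^{k_2}\wedge (X_1^*)^\TT,
\end{equation*}
where I have used $(S^{V_j})^\TT = S^{k_j}$ since $V_j \cong \R^{k_j}\oplus\C^{m_j}$ has trivial summand exactly $\R^{k_j}$.

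Writing $X_1^\TT\simeq X_2^\TT \simeq S^\ell$, \cref{T:duality maps type SWF} immediately gives $(X_j^*)^\TT\simeq S^{k_j-\ell}$, so that both source and target of $(f^*)^\TT$ are spheres of the common dimension $k_1+k_2-\ell$. This settles the first assertion. For the degree computation, I would invoke that degrees of based maps between spheres multiply under smash product, up to signs coming from factor permutations. \cref{T:duality maps type SWF} identifies $\deg\eta_1^\TT,\deg\epsilon_2^\TT\in\{\pm 1\}$, while $1\wedge f^\TT\wedge 1$ has degree equal to $\deg f^\TT$ up to sign. Multiplying then yields $\deg(f^*)^\TT = \pm\deg f^\TT$, as claimed.

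No genuine obstacle arises: the statement is essentially a formal bookkeeping consequence of \cref{T:duality maps type SWF} together with the multiplicativity of degree under smash product, with the sign ambiguity in the conclusion comfortably absorbing any conventions about factor permutations and orientations.
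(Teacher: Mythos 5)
Your proof is correct and matches the paper's approach: the paper itself gives no explicit proof, stating the corollary is an "immediate consequence" of \cref{T:duality maps type SWF}, and you have simply spelled out exactly the argument the paper has in mind — take $\TT$--fixed points (which commutes with smash products), apply \cref{T:duality maps type SWF} to identify $(X_j^*)^\TT\simeq S^{k_j-\ell}$ and conclude $\eta_1^\TT,\epsilon_2^\TT$ have degree $\pm1$, then use multiplicativity of degree under smash.
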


Another standard consequence of $V$--duality is that the role of homology and cohomology is interchanged.
For the theories represented by the $\TT$--spectra $F$, $C$ and~$T$ this means the following.

\begin{lemma}\label{T:V-duality vs co-homology}
Suppose that~$X$ and~$X^*$ are $V$--dual and write~$|V|=\dim(V)$.
There are natural isomorphisms that make the following diagram commute for every~$n\in\Z$:
	\begin{equation}\label{eq:V-duality diagram}\begin{tikzcd}
	F^nX \rar{\mu^n} \dar{\cong} & 
		C^nX \rar{\lambda^n} \dar{\cong} & 
			T^nX \rar{\delta^n} \dar{\cong} &
				F^{n+1}X \dar{\cong} \\ 
	F_{|V|-n}X^* \rar{\mu_{|V|-n}} & 
		C_{|V|-n}X^* \rar{\lambda_{|V|-n}} & 
			T_{|V|-n}X^* \rar{\delta_{|V|-n}} &
				F_{|V|-n-1}X^*
	\end{tikzcd}\end{equation}
\end{lemma}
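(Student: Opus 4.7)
The plan is to establish a general fact: for any $\TT$--spectrum $E$, the $V$--duality data $(\eta,\epsilon)$ induces an isomorphism
\[
D_E \colon E^n(X) \xra{\cong} E_{|V|-n}(X^*)
\]
that is natural in $E$. The three commutative squares of~\eqref{eq:V-duality diagram} will then follow by applying this naturality to the three maps $\mu\colon F\to C$, $\lambda\colon C\to T$, and $\delta\colon T\to F\wedge S^1$ of $\TT$--spectra in the cofiber sequence~\eqref{eq:FCT cofiber sequence}.

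To construct $D_E$, I would send a class $[f]\in E^n X$ represented by a $\TT$--map $f\colon X\to \Sigma^n E$ to the homotopy class of the composite
\[
S^V \xra{\eta} X\wedge X^* \xra{f\wedge 1_{X^*}} \Sigma^n E\wedge X^*,
\]
interpreted as an element of $[S^V,\Sigma^n E\wedge X^*]^\TT\cong E_{|V|-n}(X^*)$. A symmetric construction with $\epsilon$ produces an inverse candidate, and the triangle identities in~\eqref{eq:duality maps} imply via standard diagram chases that the two maps are mutually inverse. Naturality in $E$ is then immediate: for any map of $\TT$--spectra $\varphi\colon E_1\to E_2$, smashing $\varphi$ onto the $E$--factor either before or after applying the construction yields the same class in $(E_2)_{|V|-n}(X^*)$, so the evident square commutes.

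With $D$ in hand, the three squares of~\eqref{eq:V-duality diagram} are instances of naturality applied to the maps $\mu$, $\lambda$, and $\delta$ from~\eqref{eq:FCT cofiber sequence}, the last of which lands in $F\wedge S^1$ and therefore carries a visible degree shift: $\delta^n\colon T^n X\to F^{n+1}X$ on cohomology of $X$, and $\delta_{|V|-n}\colon T_{|V|-n}X^*\to F_{|V|-n-1}X^*$ on homology of $X^*$. These are matched by $D$ once one sets $m=|V|-n$ and uses $(F\wedge S^1)^n X = F^{n+1}X$ and $(F\wedge S^1)_{|V|-n}(X^*) = F_{|V|-n-1}(X^*)$.

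The main obstacle is the identification $[S^V,-]^\TT\cong[S^{|V|},-]^\TT$ implicit in the construction of $D_E$, which is not automatic in genuine $\TT$--equivariant stable homotopy. For the Borel--type spectra $F$, $C$, $T$ and their module spectra it does hold, essentially because smashing with $\ET_+$ trivializes the $\TT$--action on representation spheres up to the underlying real dimension (cf.~(B\ref{fact: suspension isomorphisms})). An alternative, cleaner route is to invoke the general equivariant Spanier--Whitehead duality framework of~\cite{May_equivariant_homotopy_1996}*{\S~XVI.7--8}, which produces $D_E$ directly for every $\TT$--spectrum $E$ and handles the grading bookkeeping uniformly. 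Sign and orientation conventions in the triangle identities, together with the shift appearing in~\eqref{eq:representing Borel}, are secondary nuisances that must be tracked but affect only the normalization of $D$, not the commutativity of the diagram.
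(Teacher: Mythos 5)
Your argument is correct, and it fills in exactly what the paper leaves implicit: the lemma is introduced as ``another standard consequence of $V$--duality'' and no proof is given, with the reader tacitly pointed to the general equivariant Spanier--Whitehead framework of May et al. Your construction of $D_E$ via $f\mapsto (f\wedge 1_{X^*})\circ\eta$, its naturality in the spectrum variable, and the application to $\mu$, $\lambda$, $\delta$ from the cofiber sequence~\eqref{eq:FCT cofiber sequence} is precisely the standard route; you also correctly flag the one point that genuinely requires the Borel-type nature of $F$, $C$, $T$ (and their $C$--module structure), namely the identification $[S^V,-]^\TT\cong[S^{|V|},-]^\TT$ needed to convert $RO(\TT)$--graded duality into the $\Z$--graded statement, which follows from the orientability of $\ET\times_\TT V\to\BT$ as in~(B\ref{fact: suspension isomorphisms}).
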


This suggests that we should also consider homological $h$--invariants
\begin{definition}\label{D:h-invariants top hom}
Given a $\TT$--space of type SWF~$X$ and~$\kk$ a principal ideal domain, we define the \emph{homological $h$--invariants} as
	\begin{align*}
	h_w(X;\kk) 
	&= \max\Set{k\in\Z}{\text{$C_{\ell+2k}X\xra{\lambda_*} T_{\ell+2k}X$ is non-zero}} 
	\\[1em]
	h_s(X;\kk) 
	&= \max\Set{k\in\Z}{\text{$C_{\ell+2k}X\xra{\lambda_*} T_{\ell+2k}X$ is surjective}}. 	\end{align*}
For fields we write~$h_\F(X)=h_{s/w}(X;\F)$ and~$h_p(X)=h_{\F_p}(X)$.
\end{definition}

As a consequence of \cref{T:V-duality vs co-homology}, we get an analogue of~\cref{T:monopole h-inv hom vs coh}:

\begin{corollary}\label{T:top h-inv hom vs coh}
Suppose that~$X$ and~$X^*$ are $\TT$--spaces of type SWF which are $V$--dual with~$V\cong\R^k\oplus\C^m$.
Then for every commutative ring~$\kk$ we have
	\begin{equation}
	h^{s/w}(X;\kk) = m-h_{s/w}(X^*).
	\end{equation}
\end{corollary}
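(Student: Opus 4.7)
The strategy is to run the Tate characterization of the cohomological $h$--invariants from \cref{T:h via Tate} through the $V$--duality isomorphisms of \cref{T:V-duality vs co-homology} and then compare the result to the definition of the homological $h$--invariants in \cref{D:h-invariants top hom}.

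First I would collect the relevant dimensional data. Since $X^\TT \simeq S^\ell$, \cref{T:duality maps type SWF} forces $(X^*)^\TT \simeq S^{\ell^*}$ with $\ell^* = k - \ell$, and $|V| = k + 2m$, so
\[
|V| - (\ell + 2a) \;=\; \ell^* + 2(m-a) \qquad\text{for all } a \in \Z.
\]
The diagram of \cref{T:V-duality vs co-homology} specialises at degree $n = \ell+2a$ to a natural commutative square
\[
\begin{tikzcd}
C^{\ell+2a}X \ar[r,"\lambda^*"] \ar[d,"\cong"'] & T^{\ell+2a}X \ar[d,"\cong"'] \\
C_{\ell^*+2(m-a)}X^* \ar[r,"\lambda_*"] & T_{\ell^*+2(m-a)}X^*
\end{tikzcd}
\]
so the horizontal maps are simultaneously surjective and simultaneously non-zero.

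Combining this with \cref{T:h via Tate} and \cref{D:h-invariants top hom} and substituting $b = m - a$, I obtain
\begin{align*}
h^s(X;\kk)
&= \min\bigl\{\,a \in \Z \,:\, \lambda^* \text{ is surjective in degree } \ell+2a \,\bigr\} \\
&= \min\bigl\{\,a \in \Z \,:\, \lambda_* \text{ is surjective in degree } \ell^* + 2(m-a) \,\bigr\} \\
&= m \;-\; \max\bigl\{\,b \in \Z \,:\, \lambda_* \text{ is surjective in degree } \ell^* + 2b \,\bigr\} \\
&= m - h_s(X^*;\kk),
\end{align*}
and the identical argument with ``surjective'' replaced by ``non-zero'' gives $h^w(X;\kk) = m - h_w(X^*;\kk)$.

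The main technical point, which is really bookkeeping rather than substance, is keeping all degree shifts compatible: the $|V|$--shift coming from $V$--duality, the built-in $(-1)$--shift in the notation $T_nX = \tBH_{n-1}(X)$ and $C_nX = \cBH_{n-1}(X)$, and the strict commutativity of the duality isomorphisms with $\lambda$. Once these are aligned, the argument reduces to a one-line change of variable that trades the minimum in the cohomological definition for a maximum in the homological one.
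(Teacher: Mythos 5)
Your proof is correct and follows the same route as the paper: compute $|V|-(\ell+2a)=\ell^*+2(m-a)$, feed degree $n=\ell+2a$ into the commutative ladder of \cref{T:V-duality vs co-homology}, and change variables $b=m-a$ to trade the minimum over $\lambda^*$ (which equals $h^{s/w}$ by \cref{T:h via Tate}) for the maximum over $\lambda_*$ defining $h_{s/w}$ in \cref{D:h-invariants top hom}. The paper's own proof is terser — it leaves the invocation of \cref{T:h via Tate} implicit under the phrase ``the definitions'' — but the bookkeeping you spell out is exactly what that phrase suppresses, so there is nothing to add or correct.
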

\begin{proof}
It suffices to note that if~$X^\TT\simeq S^\ell$, then~$(X^*)^\TT\simeq S^{\ell*}$ with~$\ell+\ell^*=k$.
Thus for~$n=\ell+2h$ we find $|V|-n = \ell^*+2(m-h)$.
The claim now follows from \cref{T:V-duality vs co-homology} and the definitions.
\end{proof}

\subsection{Kronecker duality}
The second notion of duality relates the homological and cohomological $h$--invariants of the same space using Kronecker pairings derived from the multiplicative structures of the spectra~$F$, $C$, and~$T$ explained in \cref{R:ring spectra}.
We first note that the isomorphisms in~\eqref{eq:representing Borel} give a pairing
	\begin{equation}\label{eq:Kronecker ordinary}
	C^nX\otimes_\kk F_{n+1}X
	\cong \tH^n(E\TT_+\wedge\TT X) \otimes_\kk \tH_n(E\TT_+\wedge\TT X)
	\to\kk
	\end{equation}
derived from the ordinary Kronecker pairing.
Given any $\kk$--module, we write $M^*=\Hom_\kk(M,\kk)$ for the dual $\kk$--module.
The pairing above thus gives a $\kk$--linear map $C^nX\to(F_{n+1}X)^*$.
Making more systematic use of the multiplicative structures of~$F$, $C$, and~$T$, we obtain the following:
\begin{lemma}\label{T:Kronecker}
Let~$X$ be a $\TT$--space of type SWF.
The multiplicative structures of~$F$, $C$, and~$T$ give rise to a commutative diagram
	\begin{equation}\label{eq:Kronecker diagram}\begin{tikzcd}
		C^nX \rar{\lambda^n} \dar & 
			T^nX \dar{\cong} \\
		(F_{n+1}X)^* \rar{\delta_{n+2}^*} & 
			(T_{n+2}X)^* 
	\end{tikzcd}\end{equation}
where the map $C^nX\to(F_{n+1}X)^*$ corresponds to the pairing in~\eqref{eq:Kronecker ordinary}.
In particular, it is an isomorphism for field coefficients.
The map~$T^nX\to(T_{n+2}X)^*$ is always an isomorphism.
\end{lemma}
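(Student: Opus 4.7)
The plan is to construct both vertical maps from the ring and module structures of the $\TT$-spectra (\cref{R:ring spectra}), establish commutativity of the square formally from the cofiber sequence $F \to C \to T \to \Sigma F$, and then prove the two isomorphism claims by separate arguments. First I would use the $C$-module structure $C \wedge F \to F$ to produce a natural slant-type pairing $C^n X \otimes_\kk F_m X \to F_{m-n}(S^0)$, specialize to $m = n+1$, and invoke $F_1(S^0) \cong \BH_0(\mathrm{pt};\kk) \cong \kk$ to obtain the left vertical map. Unwinding the identifications of \eqref{eq:representing Borel} one checks that under $C^n X \cong \BC^n(X;\kk)$ and $F_{n+1} X \cong \BH_n(X;\kk)$ this coincides with the pairing displayed in \eqref{eq:Kronecker ordinary}. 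The right vertical map is constructed analogously from the ring multiplication $T \wedge T \to T$, producing a pairing $T^n X \otimes_\kk T_{n+2} X \to T_2(S^0) \cong \kk$.

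For commutativity, since $\lambda \colon C \to T$ is a map of ring $\TT$-spectra and $\delta$ is the connecting map of a cofiber sequence of $C$-module $\TT$-spectra, the standard Leibniz-type compatibility between boundary maps and the cap-product pairing yields the identity $\langle \lambda^n \alpha, \tau \rangle_T = \langle \alpha, \delta_{n+2} \tau \rangle_C$ for $\alpha \in C^n X$ and $\tau \in T_{n+2} X$. This is a formal spectrum-level diagram chase using that $F$ is the fiber of $\lambda$, and it is precisely the desired commutativity of the square.

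For the right vertical isomorphism I would invoke \cref{T:Localization}(ii): for $X$ of type SWF with $X^\TT \simeq S^\ell$, the fixed-point inclusion identifies $T^* X$ and $T_* X$ with the rank-one free $\kk[u,u^{-1}]$-modules $\tH^*(S^\ell;\kk) \otimes_\kk \kk[u,u^{-1}]$ and $\tH_*(S^\ell;\kk) \otimes_\kk \kk[u,u^{-1}]$, and the Tate Kronecker pairing is compatible with localization, restricting to the tensor product of the perfect pairing $\tH^*(S^\ell) \otimes \tH_*(S^\ell) \to \kk$ with the evident perfect self-pairing of $\kk[u,u^{-1}]$. For the left vertical map over a field $\F$ I would cite the ordinary universal coefficient theorem applied to the CW complex $\ETwt X$: over a field the evaluation map $\tH^n(Z;\F) \to \Hom_\F(\tH_n(Z;\F),\F)$ is an isomorphism for any $Z$, which under \eqref{eq:representing Borel} is exactly the statement that $C^n X \to (F_{n+1} X)^*$ is an isomorphism for $\kk = \F$.

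The principal obstacle is the commutativity verification, because it requires carefully aligning the sign conventions of the cofiber sequence, the degree shifts in \eqref{eq:representing Borel}, and the directions of the two Kronecker-type pairings so that the two routes around the square give the same element of $(T_{n+2} X)^*$ on the nose rather than merely up to sign. Once these conventions are pinned down, the remaining arguments are standard applications of the Localization Theorem and the ordinary universal coefficient theorem.
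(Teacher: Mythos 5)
Your proposal follows essentially the same approach as the paper: the left vertical map is the Kronecker pairing $C^n X\otimes_\kk F_{n+1}X\to F_1 S^0\cong\kk$ built from the $C$-module structure on $F$, the right vertical map is the analogous pairing built from the ring structure of $T$, commutativity comes from the compatibility of these pairings with the maps in the cofiber sequence $F\to C\to T\to\Sigma F$, and the field-coefficient statement for the left map is the ordinary UCT applied to $\ET_+\wedge_\TT X$. The paper packages the commutativity as a commuting square of pairings together with the observation that $\delta_2\colon T_2 S^0\to F_1 S^0$ is an isomorphism (since $C_*S^0$ vanishes in the relevant degrees), which is cleaner than your Leibniz/cap-product framing but says the same thing.

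The one place you take a genuinely different route is the isomorphism $T^nX\xra{\cong}(T_{n+2}X)^*$. You assert that the Tate Kronecker pairing is compatible with localization and hence restricts, under \cref{T:Localization}(ii), to the tensor product of the perfect pairing on $\tH^*(S^\ell)$ with a perfect self-pairing on $\kk[u,u^{-1}]$. That is correct in spirit, but it is an assertion that would itself need a proof, and the ``evident'' pairing on $\kk[u,u^{-1}]$ requires some care to pin down (it is a degree-shifted pairing $T^nS^0\otimes T_{n+2}S^0\to\kk$, not an internal self-pairing). The paper instead gives a short concrete argument: multiplication by $u^n$ is invertible on Tate theories, so one reduces to $n\ge 0$, where $T^nX\cong C^nX^\TT\cong\BC^n(S^\ell)$ and $T_{n+2}X\cong F_{n+1}X^\TT\cong\BH_n(S^\ell)$, and then the map is the ordinary Kronecker pairing for $S^\ell$, which is an isomorphism by UCT since $\BH_n(S^\ell)$ is torsion-free over $\kk$. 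If you want to keep your more abstract formulation, you should either supply that reduction or an explicit verification that the localization isomorphisms intertwine the two pairings.
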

\begin{proof}
The multiplicative structures give a commutative diagram
	\begin{equation*}\begin{tikzcd}
	C^n\otimes_\kk F_{n+1} \ar[d] &
		C^nX\otimes_\kk T_{n+2}X \ar[l,"1\otimes\delta_{n+2}"'] \ar[r,"\lambda^n\otimes1"] \ar[dr] &
			T^n\otimes_\kk T_{n+2} \ar[d] \\
	F_1S^0 && T_2S^0 \ar[ll,"\delta_2"',"\cong"]
	\end{tikzcd}\end{equation*}
where the unlabeled maps are Kronecker pairings.
The map~$\delta_2$ is an isomorphism, because by~\eqref{eq:representing Borel} the groups $C_kS^0\cong C^{-k}S^0\cong \BC^{-k}(S^0)$ vanish.
Similarly, we find $F_1S^0 \cong \BH_0(S^0;\kk)\cong\kk$.
From this we can derive the commutative diagram~\eqref{eq:Kronecker diagram}.
The asserted relation of $C^nX\to(F_{n+1}X)^*$ and~\eqref{eq:Kronecker ordinary} follows from writing out the definitions, which is somewhat tedious but ultimately straightforward.
The ordinary universal coefficient theorem shows that the map $C^nX\to(F_{n+1}X)^*$ is surjective, and an isomorphisms if~$\kk$ is a field or, more generally, if~$F_{n+1}X\cong \BH_n(X)$ is torsion free over~$\kk$.
As for the map $T^nX\to(T_{n+2}X)^*$, we may assume that~$n\ge0$.
Indeed, multiplication by~$u^n$ gives a commutative diagram
	\begin{equation*}\begin{tikzcd}
	T^nX \dar & T^{-n}X \ar[l,"\cong","u^n"']\dar \\
	(T_{n+2}X)^* & (T_{-n+2}X)^* \ar[l,"\cong","(u^n)^*"']
	\end{tikzcd}\end{equation*}
with vertical maps induced by the Kronecker pairings, and by \cref{T:Localization} multiplication by~$u^n$ is invertible for the Tate theories.
Moreover, for~$n\ge0$ we get isomorphisms
	\begin{equation}
	T^nX\cong T^nX^\TT \cong C^nX^\TT \cong \BC^n(S^\ell),
	\end{equation}
and similarly
	\begin{equation}
	T_{n+2}X \cong T_{n+2}X^\TT \cong F_{n+1}X^\TT \cong \BH_n(S^\ell).
	\end{equation}
The map~$T^nX\to(T_{n+2}X)^*$ is taken to the map~$C^nX^\TT\to(F_{n+1}X^\TT)^*$ which is again an isomorphism by the universal coefficient theorem, since~$\BH_n(S^\ell)$ is torsion free.
\end{proof}

We can use \cref{T:Kronecker} to compare the homological and cohomological \mbox{$h$--invariants} of the same space.
This is particularly simple for field coefficients.

\begin{proposition}\label{T:duality SWF type}
Let~$X$ be a $\TT$--space of type SWF.
If $\F$ a field, then
	\begin{equation}
	h_\F(X)=h^\F(X).
	\end{equation}
If~$\kk$ is a principal ideal domain, then
	\begin{equation}\label{eq:h-invariants inequalities}
	h_s(X;\kk)\le h_w(X;\kk)=h^w(X;\kk)\le h^s(X;\kk).
	\end{equation}
\end{proposition}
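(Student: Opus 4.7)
The inequalities $h_s(X;\kk)\le h_w(X;\kk)$ and $h^w(X;\kk)\le h^s(X;\kk)$ are immediate from the definitions, since any surjection is nonzero. The substance is the central equality $h^w(X;\kk) = h_w(X;\kk)$, which for field coefficients sharpens to $h^\F(X) = h_\F(X)$. My plan is to settle the field case directly via Kronecker duality and the localization theorem, and then bootstrap to the PID case through the field of fractions using flatness.

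\textbf{Field case.}
For a field~$\F$, Lemma~\ref{T:Kronecker} says that the left vertical map $C^nX\to (F_{n+1}X)^*$ in the Kronecker square~\eqref{eq:Kronecker diagram} is an isomorphism, and the right vertical map $T^nX\to (T_{n+2}X)^*$ is always an isomorphism. Combined with Proposition~\ref{T:Localization}(ii) and $X^\TT\simeq S^\ell$, this forces $T^{\ell+2k}X$ and $T_{\ell+2(k+1)}X$ to be one-dimensional over~$\F$ for every $k\in\Z$, so both $\lambda^{\ell+2k}$ and $\lambda_{\ell+2(k+1)}$ have one-dimensional targets and are therefore each either zero or surjective. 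Chasing the Kronecker square and feeding into the long exact homology sequence of the cofiber sequence $F\to C\to T$ (in which $\delta_{n+2}$ follows $\lambda_{n+2}$) yields the chain of equivalences
\begin{equation*}
\lambda^n\ne 0
\;\Longleftrightarrow\; \delta_{n+2}^*\ne 0
\;\Longleftrightarrow\; \delta_{n+2}\ne 0
\;\Longleftrightarrow\; \lambda_{n+2}\text{ is not surjective},
\end{equation*}
where the second equivalence uses exactness of $\F$-duality. Since in this one-dimensional situation "not surjective" and "zero" coincide, $\lambda^{\ell+2k}\ne 0$ iff $\lambda_{\ell+2(k+1)}=0$. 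Reading off the minimum index on the left and the maximum index on the right from Definitions~\ref{D:h invariants topological} and~\ref{D:h-invariants top hom} gives $h^\F(X) = h_\F(X)$.

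\textbf{PID case.}
Proposition~\ref{T:basic inequalities h-invariant}(i) already gives $h^w(X;\kk) = h^\F(X)$ for $\F$ the field of fractions of~$\kk$. I would prove the homological analogue $h_w(X;\kk) = h_\F(X)$ by an entirely parallel flatness argument. Because $\F$ is flat over $\kk$, the change-of-coefficients maps $C_n^\kk X\otimes_\kk \F\to C_n^\F X$ and $T_n^\kk X\otimes_\kk \F\to T_n^\F X$ are isomorphisms (the first by the standard UCT; the second by inverting $u$ via Proposition~\ref{T:Localization}(i)), and under these $\lambda_n^\F$ is identified with $\lambda_n^\kk\otimes 1_\F$. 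Crucially, $T_n^\kk X$ is torsion free over~$\kk$: by Proposition~\ref{T:Localization}(ii) it equals $T_n^\kk X^\TT$, computed from the torsion-free (co)homology of $X^\TT\simeq S^\ell$. Hence every nonzero submodule of $T_n^\kk X$ survives base change to $\F$, so $\lambda_n^\kk\ne 0$ iff $\lambda_n^\F\ne 0$. Assembling, $h^w(X;\kk) = h^\F(X) = h_\F(X) = h_w(X;\kk)$, completing the chain in the proposition.

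\textbf{Main obstacle.}
The principal technical hurdle is the bookkeeping around the grading conventions $T_nX=\tBH_{n-1}(X;\kk)$ and $F_{n+1}X=\BH_n(X;\kk)$, and verifying that the torsion-freeness of $T_n^\kk X$ together with the Kronecker isomorphism $T^n X\cong (T_{n+2}X)^*$ aligns correctly with the long exact sequences. Once these compatibilities are in place, the core argument reduces to a diagram chase followed by flat base change.
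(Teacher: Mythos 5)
Your proof is correct and follows the same overall route as the paper's: Kronecker duality (\cref{T:Kronecker}) combined with Tate localization for the field case, then reduction to the fraction field over a general PID. Where you are more thorough than the printed proof is in the PID case. The paper concludes $h^w(X;\kk)=h_w(X;\kk)$ ``after passing to the field of fractions and invoking \cref{T:basic inequalities h-invariant}'', but that proposition only supplies the cohomological identity $h^\F(X)=h^w(X;\kk)$; the homological counterpart $h_\F(X)=h_w(X;\kk)$ is needed and is left tacit there. Your flat base change argument, using that $T_nX$ with $\kk$--coefficients is torsion-free (being either $\kk$ or $0$ by \cref{T:Localization}), supplies exactly this step. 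Your field-case phrasing via one-dimensional Tate modules is equivalent to the paper's reformulation $h_\F(X)=\min\{k : \delta_{\ell+2k+2}\ne0\}$, and the paper's auxiliary Kronecker inequality $h_s(X;\kk)\le h^w(X;\kk)$ over a PID is rendered redundant by your argument, since the trivial $h_s\le h_w$ together with $h_w=h^w$ already covers it.
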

\begin{proof}
We first discuss the case with coefficients in~$\F$.
We can express as the $h$--invariants as
	\begin{equation}\label{eq:h via exactness}
	h^\F(X) = \min\Set{k}{\lambda^{\ell+2k}\ne0} \AND 
	h_\F(X) = \min\Set{k}{\delta_{\ell+2k+2}\ne0}
	\end{equation}
using \cref{T:h via Tate} for~$h^\F(X)$ and the exactness of $C_*X\xra{\lambda_*}T_*X \xra{\delta_*} F_{*-1}X$ for~$h_\F(X)$.
Now \cref{T:Kronecker} shows that
	\begin{equation}
	\lambda^n\ne0 \Leftrightarrow
	\delta_{n+2}^* \Leftrightarrow
	\delta_{n+2}\ne0
	\end{equation}
which together with~\eqref{eq:h via exactness} implies~$h^\F(X)=h_\F(X)$.
In the case of a principal ideal domain~$\kk$, we can only draw the weaker conclusion
	\begin{equation}
	\lambda^n \ne 0 \Leftrightarrow 
	\delta_{n+2}^* \ne 0 \Rightarrow
	\delta_{n+2} \ne 0.
	\end{equation}
Using the obvious analogue of~\eqref{eq:h via exactness} we get~$h^w(X;\kk)\ge h_s(X;\kk)$.
The equality~$h^w(X;\kk)=h_w(X;\kk)$ follows from the field coefficient case after passing to the field of fractions and invoking~\cref{T:basic inequalities h-invariant}.
The remaining inequality in~\eqref{eq:h-invariants inequalities} is trivial.
\end{proof}

Combining both notions of duality, we arrive at the following:

\begin{corollary}\label{T:duality top}
Suppose that~$X$ and~$X^*$ are $\TT$--spaces of type SWF which are $V$--dual with~$V\cong\R^k\oplus\C^m$.
Then for every field~$\F$ we have
	\begin{equation}
	h^\F(X) = m-h^\F(X^*) \AND 
	h_\F(X) = m-h_\F(X^*).
	\end{equation}
For a principal ideal domain~$\kk$ we have
	\begin{equation}
	h_s(X;\kk)+h_s(X^*;\kk) \le m \le h^s(X;\kk)+h^s(X^*;\kk)
	\end{equation}
\end{corollary}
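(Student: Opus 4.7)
The plan is to combine the two duality results already established — Spanier–Whitehead $V$-duality (\cref{T:top h-inv hom vs coh}) and Kronecker-type duality (\cref{T:duality SWF type}) — as a straightforward bookkeeping exercise. No new geometric or algebraic input is required; the main task is to chain together one equality and one inequality in the right order.

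For the field case, I would first apply \cref{T:top h-inv hom vs coh} to obtain $h^\F(X) = m - h_\F(X^*)$, then substitute $h_\F(X^*) = h^\F(X^*)$ from \cref{T:duality SWF type}, yielding $h^\F(X) = m - h^\F(X^*)$. The parallel identity $h_\F(X) = m - h_\F(X^*)$ follows immediately by the same field-coefficient equality $h_\F(X) = h^\F(X)$.

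For the principal ideal domain case, I would begin by noting that \cref{T:top h-inv hom vs coh} holds for arbitrary commutative rings, so the strong flavor gives the \emph{equalities}
\begin{equation*}
h^s(X;\kk) + h_s(X^*;\kk) = m \AND h^s(X^*;\kk) + h_s(X;\kk) = m,
\end{equation*}
the second following by symmetry of the duality relation (swapping the roles of $X$ and $X^*$, which is legitimate since $V$-duality is symmetric up to interchanging $\eta$ and $\epsilon$). Now I invoke the chain $h_s(\,\cdot\,;\kk) \le h^s(\,\cdot\,;\kk)$ from \cref{T:duality SWF type}: substituting $h_s(X^*;\kk) \le h^s(X^*;\kk)$ into the first equality yields $h^s(X;\kk) + h^s(X^*;\kk) \ge m$, while substituting $h_s(X;\kk) \le h^s(X;\kk)$ into the second equality yields $m \ge h_s(X;\kk) + h_s(X^*;\kk)$.

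There is no serious obstacle here; the entire content of the proof is the observation that one equality (from $V$-duality) combined with one inequality (from Kronecker duality and the universal coefficient theorem) packages into the two-sided estimate. The only minor point worth mentioning is the symmetry of $V$-duality used above, which is immediate from \cref{D:V-duality} since the definition is symmetric in $(X,X^*)$ upon swapping $\eta \leftrightarrow \epsilon$ and transposing the factor exchange.
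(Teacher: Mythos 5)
Your proof is correct and follows exactly the route the paper takes, whose own proof is simply ``Immediate from \cref{T:top h-inv hom vs coh,T:duality SWF type}.'' One minor bookkeeping slip in the PID case: the lower bound $m \ge h_s(X;\kk)+h_s(X^*;\kk)$ requires substituting $h_s(X;\kk)\le h^s(X;\kk)$ into the \emph{first} equality $h^s(X;\kk)+h_s(X^*;\kk)=m$ (or, equivalently, $h_s(X^*;\kk)\le h^s(X^*;\kk)$ into the second); substituting $h_s(X;\kk)\le h^s(X;\kk)$ into the second equality as you write would reproduce the upper bound instead. In fact both bounds already follow from the first equality alone by applying $h_s(\,\cdot\,;\kk)\le h^s(\,\cdot\,;\kk)$ separately to $X$ and to $X^*$, so the symmetric second equality — and hence the remark about swapping $\eta\leftrightarrow\epsilon$ — is unnecessary; this does not affect the correctness of your conclusion.
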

\begin{proof}
Immediate from \cref{T:top h-inv hom vs coh,T:duality SWF type}.
\end{proof}

\subsection{Additivity}

In this section, we investigate the behavior of the topological $h$--invariants under smash products.
Clearly, the class of $\TT$--spaces of type SWF is closed under this operation.
We first consider a special case, which can be interpreted as a stability property.

\begin{lemma}\label{T:stability}
If~$X$ is a $\TT$--space of type~SWF, then
	\begin{equation}\label{eq:stability}
	h^{s/w}(X\wedge S^{\ell,m};\kk) = h^{s/w}(X;\kk)+m
	\end{equation}
for all choices of~$\kk$ and all~$\ell,m\ge0$.
\end{lemma}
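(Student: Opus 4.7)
The plan is to reduce the computation to a diagram chase involving the Thom/suspension isomorphisms of~(B\ref{fact: suspension isomorphisms}), after identifying the fixed points of $X\wedge S^{\ell,m}$.

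First, since $\TT$ acts trivially on the $S^\ell$-factor and freely on $S^{0,m}\setminus\{*\}$, we have $(X\wedge S^{\ell,m})^\TT = X^\TT\wedge S^\ell$. If $X^\TT\simeq S^{\ell_0}$, then $(X\wedge S^{\ell,m})^\TT\simeq S^{\ell_0+\ell}$, so the testing degrees for the new space are $\ell_0+\ell+2k$ with $k\ge0$. The fixed-point inclusion factors as $i_X\wedge i_V$ where $V=\R^\ell\oplus\C^m$ and $i_V\colon S^\ell\hookrightarrow S^V=S^{\ell,m}$ is the fixed-point inclusion of $V$.

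Next, I would apply the Thom isomorphism twice: once with respect to $V$ on $X\wedge S^{\ell,m}$ and once with respect to the trivial representation $\R^\ell$ on $X^\TT\wedge S^\ell$. Using the computation of $i_V^*$ from the opening paragraph of \cref{ch:examples} (namely $i_V^*e_{\ell,m}=u^m s_\ell$) together with naturality of the Thom isomorphism across smash products, the diagram
\begin{equation*}
\begin{tikzcd}[column sep=large]
\BC^{\ell_0+\ell+2k}(X\wedge S^{\ell,m}) \ar[r,"i^*_{X\wedge S^{\ell,m}}"] \ar[d,"s_V","\cong"'] &
\BC^{\ell_0+\ell+2k}(X^\TT\wedge S^\ell) \ar[d,"s_{\R^\ell}","\cong"'] \\
\BC^{\ell_0+2(k-m)}(X) \ar[r,"u^m\cdot i_X^*"] &
\BC^{\ell_0+2k}(X^\TT)
\end{tikzcd}
\end{equation*}
commutes, where the bottom map factors as $i_X^*$ followed by multiplication by $u^m$.

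The key point is then that $\BC^*(X^\TT;\kk)\cong\kk[u]\,s_{\ell_0}$ by \cref{T:SWF Borel cohomology}(ii), so multiplication by $u^m$ on $\BC^*(X^\TT;\kk)$ is injective wherever its source is nonzero. For $k<m$ the source $\BC^{\ell_0+2(k-m)}(X)$ is a group whose image in $\BC^{\ell_0+2(k-m)}(X^\TT)=0$ is forced to be zero, so the bottom composite is zero while its target is nonzero; hence such $k$ contribute to neither $h^w$ nor $h^s$. For $k\ge m$ the multiplication by $u^m$ is an isomorphism $\BC^{\ell_0+2(k-m)}(X^\TT)\xrightarrow{\cong}\BC^{\ell_0+2k}(X^\TT)$, so the bottom composite is nonzero (respectively surjective) if and only if $i_X^*$ in degree $\ell_0+2(k-m)$ is. Setting $j=k-m$ and taking the minimum gives \eqref{eq:stability}.

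The only point requiring care is the commutativity of the square above, i.e.\ tracking how the Thom isomorphisms interact with fixed-point restriction on a smash product, but this is a formal naturality consequence of the decomposition $V=\R^\ell\oplus\C^m$ and the multiplicativity of Thom classes. Everything else is bookkeeping on degrees.
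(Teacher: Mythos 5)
Your proof is correct and follows essentially the same route as the paper's: you identify $(X\wedge S^{\ell,m})^\TT = X^\TT\wedge S^\ell$, use the Thom/suspension isomorphisms of~(B\ref{fact: suspension isomorphisms}), and reduce to the fact that on $X^\TT$ the suspension by~$\C^m$ corresponds to multiplication by~$u^m$. The only cosmetic difference is that the paper separates the computation into two commutative squares (un-suspend by~$\R^\ell$, then relate suspension by~$\C^m$ to~$u^m$) whereas you compose them into a single square and spell out the $k<m$ versus $k\ge m$ bookkeeping explicitly.
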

\begin{proof}
Suppose that~$X^\TT\simeq S^k$.
Then $(X\wedge S^{\ell,m})^\TT\cong X^\TT\wedge S^{\ell}\simeq S^{k+\ell}$, and the $h$--invariants can be compared using the commutative diagram
	\begin{equation*}\begin{tikzcd}
	\BC^{k+\ell+2n}(X\wedge S^{\ell,m}) \dar{i^*_{X\wedge S^{\ell,m}}} \rar{\cong} &
	\BC^{k+2n}(X\wedge S^{0,m}) \dar{i^*_{X\wedge S^{0,m}}} &
	\BC^{k+2(n-m)}(X) \dar{i^*_{X}} \lar{\cong}
	\\
	\BC^{k+\ell+2n}(X^\TT\wedge S^{\ell}) \rar{\cong} &
	\BC^{k+2n}(X^\TT) &
	\BC^{k+2(n-m)}(X^\TT) \ar[l,"\cong","u^m"']
	\end{tikzcd}\end{equation*}
which is provided by the Thom and Euler class interpretation of the suspension isomorphisms and~$u$ in~(B\ref{fact:module structure}) and~(B\ref{fact: suspension isomorphisms}).
\end{proof}

The situation for two arbitrary $\TT$--spaces of type SWF is more complicated and we can only prove additivity for field coefficients.

\begin{proposition}\label{T:additivity top}
If~$X_1$ and~$X_2$ are $\TT$--spaces of type SWF, then so is~$X_1\wedge X_2$ and for every field~$\F$ we have
	\begin{equation}\label{eq:additivity top h-inv}
	h^\F(X_1\wedge X_2) = h^\F(X_1)+h^\F(X_2).
	\end{equation}
Moreover, for every principal ideal domain~$\kk$ we have
	\begin{equation}\label{eq:subadditivity top h-inv}
	h^s(X_1\wedge X_2;\kk) \le h^s(X_1;\kk)+h^s(X_2;\kk)
	\end{equation}
and the inequality can be strict, in general.
\end{proposition}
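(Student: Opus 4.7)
The plan is to handle the three parts of the proposition in order: verify that $X_1\wedge X_2$ is of type SWF, establish the upper bound~\eqref{eq:subadditivity top h-inv} by means of an external product construction, upgrade this to equality~\eqref{eq:additivity top h-inv} over fields using Spanier--Whitehead duality, and finally exhibit an example over $\Z$ witnessing strict inequality in~\eqref{eq:subadditivity top h-inv}. For the type SWF assertion, smash products of finite $\TT$--complexes are again finite $\TT$--complexes; a nonbasepoint class $[x_1\wedge x_2]$ is $\TT$--fixed iff each $x_i\in X_i^\TT$, so the action is semi-free with $(X_1\wedge X_2)^\TT=X_1^\TT\wedge X_2^\TT\simeq S^{\ell_1+\ell_2}$.

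For the upper bound I would use the ring $\TT$--spectrum structure on $C=F(E\TT_+,\Hk)$ from \cref{R:ring spectra} to obtain natural external cup products
$$\times\colon \BC^p(X_1;\kk)\otimes_\kk \BC^q(X_2;\kk) \lra \BC^{p+q}(X_1\wedge X_2;\kk)$$
compatible with the fixed-point restriction $i^*$. Property~(B\ref{fact: trivial action}) and the ordinary Künneth theorem identify the external product on $X_1^\TT$ and $X_2^\TT$ with the obvious one on $\kk[u]$ tensored with ordinary cohomology, so under the identifications $\BC^*(X_i^\TT)\cong \kk[u]\,s_i$ and $\BC^*((X_1\wedge X_2)^\TT)\cong \kk[u]\,s_{12}$ one has $u^as_1\times u^bs_2 = u^{a+b}s_{12}$ up to a fixed sign. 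Writing $h_i=h^s(X_i;\kk)$, choose lifts $\tilde s_i\in \BC^{\ell_i+2h_i}(X_i;\kk)$ with $i_{X_i}^*\tilde s_i=u^{h_i}s_i$. Then $i^*(\tilde s_1\times\tilde s_2)=u^{h_1+h_2}s_{12}$, and acting by powers of $u$ yields surjectivity of $i^*$ in all degrees $\ell_1+\ell_2+2k$ with $k\ge h_1+h_2$, establishing~\eqref{eq:subadditivity top h-inv} and the $\le$ direction of~\eqref{eq:additivity top h-inv} over a field.

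To upgrade to equality over a field, I would combine the upper bound with Spanier--Whitehead duality. By \cref{T:type SWF duals}, each $X_i$ has a $V_i$--dual $X_i^*$ of type SWF with $V_i\cong\R^{k_i}\oplus\C^{m_i}$. Since duality in a symmetric monoidal category is compatible with monoidal products, $X_1^*\wedge X_2^*$ is $(V_1\oplus V_2)$--dual to $X_1\wedge X_2$ and $V_1\oplus V_2\cong\R^{k_1+k_2}\oplus\C^{m_1+m_2}$. Now \cref{T:duality top} gives $h^\F(Y)+h^\F(Y^*)=m$ for any $V$--dual pair with $V\cong\R^k\oplus\C^m$, so applying the upper bound to $X_1^*\wedge X_2^*$ yields
$$h^\F(X_1\wedge X_2) = (m_1+m_2)-h^\F(X_1^*\wedge X_2^*) \ge (m_1+m_2)-h^\F(X_1^*)-h^\F(X_2^*) = h^\F(X_1)+h^\F(X_2),$$
as required.

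For the strictness of~\eqref{eq:subadditivity top h-inv}, I would take $\kk=\Z$ with $X_1=X_{2,3}$ and $X_2=X_{3,2}$ from \cref{ch:examples}. By \cref{T:h-invariants of X_ab}, $h^p(X_1)=h+1$ iff $p=3$ and $h^p(X_2)=h+1$ iff $p=2$, so \cref{T:basic inequalities h-invariant}(iv) gives $h^s(X_i;\Z)=h+1$, whereas the field additivity just proved shows $h^p(X_1\wedge X_2)\le 2h+1$ for every prime or zero $p$, hence $h^s(X_1\wedge X_2;\Z)=\max_p h^p(X_1\wedge X_2)\le 2h+1 < 2h+2 = h^s(X_1;\Z)+h^s(X_2;\Z)$. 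I expect the main obstacle to be the bookkeeping in the upper bound: confirming that the external product of tower lifts $\tilde s_i$ restricts to the tower generator on $(X_1\wedge X_2)^\TT$ requires tracking the external ring structure of $C$ through the Künneth and (B\ref{fact: trivial action}) identifications -- conceptually standard, but tedious to make precise.
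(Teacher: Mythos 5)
Your proposal follows the paper's proof almost line for line: sub-additivity via the external cup product on Borel cohomology, super-additivity over fields by combining sub-additivity on the $V$--duals with \cref{T:duality top}, and strictness by smashing two spaces of the form $X_{a,b}$ whose $h^p$ jump at different primes. The only differences are cosmetic (you spell out the type-SWF verification, which the paper merely asserts, and you use $X_{2,3}\wedge X_{3,2}$ where the paper uses $X_{a,b}\wedge X_{a,c}$ for distinct primes $a,b,c$); both choices work for the same reason.
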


\begin{proof}
We first establish the following sub-additivity property~\eqref{eq:subadditivity top h-inv}.
Since Borel cohomology is represented by a ring $\TT$--spectrum, there are natural products
	\begin{equation}
	\BC^m(X_1)\otimes_\kk\BC^n(X_2) \xra{\cup} \BC^{m+n}(X_1\wedge X_2)
	\end{equation}
which are compatible with the $\kk[u]$--module structure in the sense that
	\begin{equation}
	x_1\cup(ux_2)=(ux_1)\cup x_2 = u(x_1\cup x_2)
	\end{equation}
for all~$x_j\in \BC^*(X_j)$, $j=1,2$.
Now suppose that $X_j^\TT\simeq S^{\ell_j}$, $j=1,2$.
The products give isomorphisms
	\begin{equation}\label{eq:h-invariants sub-additive}
	\BC^m(S^{\ell_1})\otimes_\kk\BC^n(S^{\ell_2}) \overunderset{\cup}{\cong}{\lra} \BC^{m+n}(S^{\ell_1+\ell_2}).
	\end{equation}
We write~$i_j\colon X_j^\TT\hra X_j$ for the fixed point inclusions and implicitly identify $i_1\wedge i_2$ with the fixed point inclusion of~$X_1\wedge X_2$.
For arbitrary $x_j\in\BC^{\ell_j+2h_j}(X_j)$ the naturality of the products gives
	\begin{equation}
	(i_1\wedge i_2)^*(x_1\cup x_2) = (i_1^*x_1)\cup(i_2^*x_2) \in \BC^{\ell_1+\ell_2+2(h_1+h_2)}(X_1\wedge X_2).
	\end{equation}
In particular, if~$i_1^*x_1$ and~$i_2^*x_2$ are both generators (resp.~non-zero), then so is $(i_1\wedge i_2)^*(x_1\cup x_2)$ which implies~\eqref{eq:subadditivity top h-inv}.

Now let~$\F$ be a field.
By \cref{T:type SWF duals} we can find $\TT$--spaces of type~SWF~$X_j^*$ and $\TT$--representations of the form~$V_j\cong \R^{k_j}\oplus\C^{m_j}$ such that~$X_j$ and~$X_j^*$ are $V_j$--dual.
One can easily check that $X_1^*\wedge X_2^*$ and~$X_1\wedge X_2$ are $V_1\oplus V_2$--dual.
Combining the sub-additivity property~\eqref{eq:subadditivity top h-inv} with \cref{T:duality top}, we obtain the super-additivity property
	\begin{equation}\begin{split}
	h^\F(X_1\wedge X_2)
	& = m_1+m_2 -  h^\F(X_1^*\wedge X_2^*) \\
	& \ge m_1+m_2 -  h^\F(X_1^*)-h^\F(X_2^*) \\
	& = h^\F(X_1)+h^\F(X_2).
	\end{split}\end{equation}
and altogether the additivity property~\eqref{eq:additivity top h-inv}.

Lastly, we show the~\eqref{eq:subadditivity top h-inv} can be strict.
Let~$a,b,c\in\N$ be distinct primes.
Then $X_{a,b}$ and~$X_{a,c}$ from \cref{eg:example for h-difference} satisfy
	\begin{equation*}
	h_p(X_{a,b}) = 
		\begin{cases}
		h+1,& p=b\\
		h, & \text{else}
		\end{cases}
	\AND
	h_p(X_{a,c}) = 
		\begin{cases}
		h+1,& p=c\\
		h, & \text{else.}
		\end{cases}
	\end{equation*}
On the one hand, \cref{T:basic inequalities h-invariant}(iv) and~\eqref{eq:additivity top h-inv} give
	\begin{equation*}\begin{split}
	h^s(X_{a,b}\wedge X_{a,c};\Z)
	&= \max_p h_p(X_{a,b}\wedge X_{a,c}) \\
	&= \max_p \big( h_p(X_{a,b}) + h_p(X_{a,c}) \big)
	= 2h+1.
	\end{split}\end{equation*}
On the other hand, \cref{T:basic inequalities h-invariant}(iv) applied individually shows
	\begin{equation*}
	h^s(X_{a,b};\Z) + h^s(X_{a,c};\Z)
	= 2h+2.\qedhere
	\end{equation*}
\end{proof}

\subsection{Monotonicity}

Let~$f\colon X\to X'$ be a $\TT$-map between $\TT$--spaces of type~SWF.
Up to homotopy equivalence, the fixed point map~$f^\TT$ is a non-equivariant map between spheres of dimensions~$\ell$ and~$\ell'$, say.
If~$\ell=\ell'$, then the mapping degree of~$f^\TT$ is well-defined up to sign.
We have the following monotonicity properties for the $h$--invariants:

\begin{proposition}\label{T:monotonicity}
Let~$f\colon X\to X'$ be a $\TT$--map between $\TT$--spaces of type~SWF with~$X^\TT\simeq X'{}^\TT$.
\begin{enumerate}[(i)]
\item
If~$\deg(f^\TT)\ne0$, then $h^w(X;\kk)\le h^w(X';\kk)$ for all~$\kk$.
\item
If~$\deg(f^\TT)=\pm1$, then $h^s(X;\kk)\le h^s(X';\kk)$ for all~$\kk$.
\end{enumerate}
\end{proposition}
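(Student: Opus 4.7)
The plan is to derive both parts from naturality of the fixed-point inclusion. The $\TT$--map $f$ restricts to $f^\TT\colon X^\TT\to (X')^\TT$, producing a commutative square of $\TT$--spaces
\begin{equation*}\begin{tikzcd}
X^\TT \rar{f^\TT} \dar{i_X} & (X')^\TT \dar{i_{X'}} \\
X \rar{f} & X'
\end{tikzcd}\end{equation*}
which upon applying Borel cohomology yields
\begin{equation*}\begin{tikzcd}
\BC^{\ell+2k}(X';\kk) \rar{f^*} \dar{i_{X'}^*} & \BC^{\ell+2k}(X;\kk) \dar{i_X^*} \\
\BC^{\ell+2k}((X')^\TT;\kk) \rar{(f^\TT)^*} & \BC^{\ell+2k}(X^\TT;\kk).
\end{tikzcd}\end{equation*}
After fixing orientations on $X^\TT\simeq S^\ell\simeq (X')^\TT$, the bottom row is fully explicit via~(B\ref{fact: trivial action}): in every degree $\ell+2k$ with $k\ge 0$, both groups are isomorphic to $\kk$, generated by $u^k s_\ell$, and $(f^\TT)^*$ acts on them as multiplication by the integer $d=\deg(f^\TT)$ (the identity on the $H^*(\BT)$ factor and the classical degree map on $\tH^*(S^\ell)$).

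For~(i), let $k=h^w(X';\kk)$ and pick $\xi'\in \BC^{\ell+2k}(X';\kk)$ with $i_{X'}^*\xi'\ne 0$; set $\xi=f^*\xi'$. Commutativity gives $i_X^*\xi = d\cdot i_{X'}^*\xi'$ inside $\kk\cdot u^k s_\ell\cong\kk$, which is non-zero as soon as $d\ne 0$ in $\kk$ and $\kk$ is a domain, so that multiplication by $d$ is injective. Hence $h^w(X;\kk)\le k$. For~(ii), when $d=\pm 1$ the bottom map $(f^\TT)^*$ is itself an isomorphism, so surjectivity of $i_{X'}^*$ in degree $\ell+2h^s(X';\kk)$ propagates to surjectivity of the composite $(f^\TT)^*\circ i_{X'}^* = i_X^*\circ f^*$, which a fortiori forces $i_X^*$ to be surjective in this degree. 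This yields $h^s(X;\kk)\le h^s(X';\kk)$.

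The only subtle point is the interpretation of ``$\deg(f^\TT)\ne 0$'' in~(i) for arbitrary coefficient rings: one genuinely needs the image of $d$ in $\kk$ to be a non-zero-divisor, which is automatic for $\kk=\Z$ or, more generally, for PIDs of characteristic either zero or coprime to $d$. In~(ii) no such concern arises because $\pm 1$ is a unit in every commutative ring. Beyond this bookkeeping, the argument is purely formal and uses nothing beyond naturality of the fixed-point inclusion and the elementary Borel cohomology calculation~(B\ref{fact: trivial action}) for trivial $\TT$--actions.
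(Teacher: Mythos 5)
Your proof is correct and follows essentially the same route as the paper's: both apply Borel cohomology to the naturality square for the fixed-point inclusion, then use (B\ref{fact: trivial action}) to identify $(f^\TT)^*$ in degree $\ell+2k$ with multiplication by $\deg(f^\TT)$ on $\kk\cong\kk\cdot u^k s_\ell$, from which (i) and (ii) drop out. The paper phrases the last step via $(\id\wedge_\TT g)^*=\id\otimes g^*$ on $\BT_+\wedge S^\ell$, which is the same computation.

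The caveat you raise at the end is in fact a genuine one, and the paper's proof implicitly has the same gap: for a PID $\kk$ of positive characteristic $p$ with $p\mid\deg(f^\TT)$, the image of $\deg(f^\TT)$ in $\kk$ is zero, so multiplication by it on $\BC^{\ell+2k}((X')^\TT;\kk)\cong\kk$ kills everything and the argument for (i) does not yield $i_X^*\circ f^*\ne0$. So (i) as stated really requires that $\deg(f^\TT)$ be a non-zero-divisor in $\kk$ (equivalently, coprime to the characteristic), exactly as you say; for $\kk=\Z$, $\Q$, or any $\kk$ of characteristic zero this is automatic, and (ii) is unaffected since $\pm1$ is always a unit. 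Note also that the paper only ever invokes the proposition when $f^\TT$ is a homotopy equivalence (in \cref{T:Froyshov main}), so the applications only need (ii); still, your observation is a correct and useful sharpening of the hypothesis for (i).
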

\begin{proof}
We consider the commutative diagram
	\begin{equation*}\begin{tikzcd}
	\BC^*(X') \ar[r,"f^*"] \ar[d,"i_{X'}^*"] &
		\BC^*(X) \ar[d,"i_{X}^*"] \\
	\BC^*(X'{}^\TT) \ar[r,"(f^\TT)^*"] &
		\BC^*(X^\TT).
	\end{tikzcd}\end{equation*}
After fixing identifications $X^\TT\simeq S^\ell\simeq X'{}^{\TT}$, we can consider~$f^\TT$ as a map $g\colon S^\ell\to S^\ell$.
The induced map on Borel cohomology is really the map $(\id\wedge_\TT g)^*$ on the ordinary cohomology of~$\ETwt S^\ell\cong \BT_+\wedge S^\ell$.
Under the indicated homeomorphism, we have $(\id\wedge_\TT g)^*=\id\otimes g^*$ which proves the claim.
\end{proof}

\begin{remark}\label{R:Kunneth approach}
Recall that we have proved the additivity property for field coefficients in \cref{T:additivity top} using duality in the form of~\cref{T:duality SWF type}.
There is an alternative approach to additivity based on a Künneth sequence
	\begin{equation*}\label{eq:Kunneth sequence fields}
	0 \to 
	\BC^*(X_1)\otimes_{\F[u]}\BC^*(X_2) \to 
	\BC^*(X_1\wedge X_2) \to 
	\Tor_{\F[u]}^{*+1}\big( \BC^*(X_1), \BC^*(X_2) \big) \to 0
	\end{equation*}
which can be derived from an Eilenberg--Moore spectral sequence for the fibrations~$\ET\times_\TT X_j\to\BT$. 
This has the advantage of bypassing the Tate and coBorel theories and the duality arguments.
However, one has to control the $\Tor$--term instead. 

In fact, the duality and additivity properties are equivalent.
Assuming that \cref{T:additivity top} has been proved independently, one can deduce \cref{T:duality top} using the monotonicity property in \cref{T:monotonicity} in conjunction with~\cref{T:duality maps type SWF}.
Together with \cref{T:V-duality vs co-homology} one recovers \cref{T:duality SWF type}.
\end{remark}

\section{The relation to Seiberg--Witten theory}
\label{ch:relation to SW}

In this final section we discuss the relation of monopole $h$--invariants and their topological analogues and use this to prove \cref{T:Froyshov intro,T:additivity etc intro,T:coeff dep intro}.
The two types of invariants are linked by the work of Lidman and Manolescu in~\cite{LidmanManolescu_equivalance_2018} which recovers the monopole Floer homology groups from the Seiberg--Witten--Floer homotopy types.

\subsection{Comparing the h--invariants}
Let $Y$ be a closed, connected \spinc 3--manifold with~$b_1(Y)$.
Recall from the introduction that the Seiberg--Witten--Floer homotopy type~$SWF(Y)$ is represented by triples~$(X,\ell,n)$ where~$X$ is a $\TT$--space of type SWF with~$X^\TT\simeq S^\ell$ and $n\in \Z+n(Y)=-\frac12\mf q^\mathrm{ev}(Y)$.
We can compare the $h$--invariants using a variant of the Lidman--Manolescu isomorphism in~\eqref{eq:Lidman-Manolescu intro}.

\begin{theorem}[Lidman--Manolescu~\cite{LidmanManolescu_equivalance_2018}]\label{T:Lidman-Manolescu cohomology}
Let $Y$ be a closed, connected \spinc 3--manifold with~$b_1(Y)=0$.
Suppose that~$SWF(Y)$ is represented by~$(X,\ell,n)$ and let~$\nu = \ell+2n$. 
Then for every principal ideal domain~$\kk$ there is a commutative diagram of~$\kk[u]$
	\begin{equation}\label{eq:LM diagram cohomology}\begin{tikzcd}
	\HMto^*(Y;\kk) \rar{i^*} \dar{\cong} & 
		\HMbar^*(Y;\kk) \dar{\cong} \\
	\BC^{*+\nu}(X;\kk) \rar{\lambda^*} &
		\tBC^{*+\nu}(X;\kk)
	\end{tikzcd}\end{equation}
in which the vertical maps are isomorphisms and respect the gradings.
\end{theorem}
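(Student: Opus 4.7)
The plan is to reduce to the Lidman--Manolescu homological isomorphism~(\ref{eq:Lidman-Manolescu intro}) by combining the monopole Floer duality~(\ref{eq:duality diagram HM}) with the Spanier--Whitehead duality developed in \cref{ch:additivity etc}. First, I would note that the argument in~\cite{LidmanManolescu_equivalance_2018} produces not just the isomorphism $\HMto_*(Y;\Z) \cong \BH_{*+\nu}(X;\Z)$ but a full identification of the long exact sequence~(\ref{eq:monopole exact sequence homology}) with the long exact sequence of the FCT cofiber sequence~(\ref{eq:FCT cofiber sequence}) applied to $X$, shifted by $\nu$. In particular $\HMbar_*(Y;\Z) \cong \tBH_{*+\nu}(X;\Z)$, and the localization map $i_*$ is identified with $u\inv\cBH_* \to \tBH_*$ via \cref{T:Localization}.

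Second, I would extend from $\Z$ to arbitrary principal ideal domains~$\kk$. Since the Lidman--Manolescu identification is constructed at the chain level using complexes of free $\Z[u]$-modules coming from equivariant CW-approximations, applying $-\otimes_\Z \kk$ (respectively $\Hom_\Z(-,\kk)$) yields the homology (respectively cohomology) statements over~$\kk$, preserving both the $\kk[u]$-module structure and the long exact sequences.

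Third, I would translate to cohomology using Spanier--Whitehead duality. By Manolescu's construction, $\SWF(-Y)$ is dual to $\SWF(Y)$: if $(X^*,\ell^*,n^*)$ represents $\SWF(-Y)$, then $X$ and $X^*$ are $V$-dual for some $V \cong \R^{\ell+\ell^*} \oplus \C^{n+n^*}$. Applying the homological result of the previous step to $-Y$ and combining it with~(\ref{eq:duality diagram HM}) and \cref{T:V-duality vs co-homology} then converts the statement for $(-Y,X^*)$ into the desired statement for $(Y,X)$, with the grading shifts conspiring through $\nu^* = \ell^* + 2n^*$ and $|V|$ to yield precisely $\nu = \ell + 2n$. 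The identification of $i^*$ with $\lambda^*$ is automatic at this stage, since both are the localization maps characterized by~(HM3) on one side and \cref{T:Localization}(i) on the other.

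The main obstacle is bookkeeping: verifying that all grading shifts match exactly, that the comparison is compatible with the $\kk[u]$-module structures rather than just $\kk$-module structures, and that the various long exact sequences align under the duality identifications. A secondary issue is confirming that the Lidman--Manolescu argument can indeed be phrased chain-theoretically to allow change of coefficients; this should follow from their method but requires checking that the intermediate chain complexes are free over~$\Z[u]$.
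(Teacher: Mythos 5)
Your steps (1)–(2), taken together with the $\Hom_\Z(-,\kk)$ parenthetical, essentially reproduce the paper's argument: extract the chain-level identification from the proof of~\cite{LidmanManolescu_equivalance_2018}*{Thm.~1.2.1}, dualize over~$\Z$ to pass to cohomology with coefficients in any~$\kk$, and then use that~$i^*$ and~$\lambda^*$ are characterized as $\kk[u]$-localization maps (HM3 on the monopole side, \cref{T:Localization} on the topological side) to complete the square. The paper also adds one technical refinement you elide: the chain isomorphism from~\cite{LidmanManolescu_equivalance_2018}*{Ch.~14} is only constructed in degree ranges~$[-N,N]$, so one needs the observation that $N$ is arbitrary to get the isomorphism in every degree.

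Your step (3), however, is a genuinely different route and is where the proposal diverges from the paper. Instead of dualizing the chain complexes directly, you propose to apply the homological LM identification to~$-Y$ and then translate via the composite of two dualities: Kronheimer--Mrowka's $\HM$-duality~\eqref{eq:duality diagram HM} on the Floer side and Spanier--Whitehead $V$-duality (\cref{T:V-duality vs co-homology}) on the topological side, using Manolescu's theorem that~$\SWF(Y)$ and~$\SWF(-Y)$ are $V$-dual. The grading bookkeeping does close up ($|V|=\nu+\nu^*$), but this route requires an extra compatibility that you do not address: one must check that the LM chain isomorphism for~$-Y$, fed through both duality diagrams, lands on the LM isomorphism for~$Y$ up to sign, rather than some other isomorphism. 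This is plausible but is not established in~\cite{LidmanManolescu_equivalance_2018}, and verifying it would be a nontrivial undertaking comparable in size to the result itself. Since your step~(2) already yields the cohomological statement, step~(3) is both redundant and the one place where a real gap could hide; the paper avoids the issue entirely by dualizing the chain complexes directly. If you want to pursue the duality route as a consistency check, the cleaner formulation is the one the paper uses downstream in \cref{T:comparing h-invariants cohomology}, where the duality translation happens at the level of the numerical $h$-invariants via \cref{T:top h-inv hom vs coh,T:monopole h-inv hom vs coh}, not at the level of the isomorphism of modules.
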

This statement does not appear explicitly in~\cite{LidmanManolescu_equivalance_2018}, but it can be deduced from the proofs of~\cite{LidmanManolescu_equivalance_2018}*{Thm.~1.2.1, Cor.~1.1.4}.
We briefly sketch the argument.
\begin{proof}
To see that the gradings in~\eqref{eq:LM diagram cohomology} make sense, recall that~$n\in\Z+n(Y)$ and $\mf q(Y)=\Z-2n(Y)$ so that~$q+\nu\in\Z$ for every~$q\in\mf q(Y)$.
The diagram can be extracted from the arguments in~\cite{LidmanManolescu_equivalance_2018}*{Ch.~14} as follows.
Lidman and Manolescu argue that for each~$N$ there is a way to realize~$\BH_{q+\nu}(X;\Z)$ in the range~$q+\nu\in[-N,N]$ as the homology of a chain complex of Morse--Floer type which is explicitly isomorphic to the part of a chain complex defining~$\HMto_*(Y;\Z)$ in this range.
Moreover, there are chain maps realizing the $u$--actions which correspond under the chain isomorphism in the range~$[-N+1,N-1]$.
Applying~$\Hom_\Z(\;,\kk)$ in each degree results in chain complexes computing~$\HMto^{q}(Y;\kk)$ and~$\BC^{q+\nu}(X;\kk)$ in a range.
Since~$N$ was arbitrary, we get isomorphisms
	\begin{equation}
	\HMto^q(Y;\kk)\cong\BC^{q+\nu}(X;\kk)
	\end{equation}
for all~$q\in\mf q(Y)$ which are compatible with the $u$--actions.
Since~$i^*$ and~$\lambda^*$ are both known to be localization maps, we also get isomorphisms
	\begin{equation}
	\HMbar_*(Y;\kk) 
	\cong u\inv \HMfrom_*(Y;\kk) 
	\cong u\inv \cBH_{*+\nu}(X;\kk)
	\cong \tBH_{*+\nu}(X;\kk)
	\end{equation}
completing the square in~\eqref{eq:LM diagram cohomology} such that it commutes.
\end{proof}
The relation between the monopole $h$--invariants and their topological analogues is now obvious.

\begin{corollary}\label{T:comparing h-invariants cohomology}
Let~$Y$ be a closed, connected \spinc 3--manifold with~$b_1(Y)=0$ and~$\kk$ a principal ideal domain.
If~$SWF(Y)$ is represented by~$(X,\ell,n)$, then
	\begin{equation}
	h^{s/w}(Y;\kk) = h^{s/w}(X;\kk) - n \AND 
	h_{s/w}(Y;\kk) = h_{s/w}(X;\kk) - n.
	\end{equation}
\end{corollary}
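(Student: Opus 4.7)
The plan is to derive both equalities directly from \cref{T:Lidman-Manolescu cohomology} and its homological counterpart, combined with the Tate reformulation of the topological $h$--invariants in \cref{T:h via Tate}.

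For the cohomological equality, I would set $\nu = \ell + 2n$. The $\kk[u]$--linear vertical isomorphisms in the square of \cref{T:Lidman-Manolescu cohomology} identify, for every $q \in \mf q^{\mathrm{ev}}(Y)$, the map $i^q$ in the top row with $\lambda^{q+\nu}$ in the bottom row. Consequently, $i^q$ is non-zero (respectively surjective) if and only if $\lambda^{q+\nu}$ is non-zero (respectively surjective). Writing $q + \nu = \ell + 2k$ rearranges to $q/2 = k - n$, so the extremal $q \in \mf q^{\mathrm{ev}}(Y)$ selected by \cref{D:monopole h-invariants} corresponds bijectively to the extremal $k$ singled out by \cref{T:h via Tate}. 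Since that extremal $k$ equals $h^{s/w}(X;\kk)$, the cohomological equality $h^{s/w}(Y;\kk) = h^{s/w}(X;\kk) - n$ drops out.

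For the homological equality, the plan is to run the same argument using the homological analog of \cref{T:Lidman-Manolescu cohomology}. The chain-level construction of \cite{LidmanManolescu_equivalance_2018}*{Ch.~14} already produces $\HMto_*(Y;\kk) \cong \BH_{*+\nu}(X;\kk)$ as $\kk[u]$--modules, and extending it to the full long exact sequence should provide a commutative square identifying the localization $p_*$ of the monopole exact sequence with the corresponding coBorel-to-Tate localization map on the topological side, again with the shift $\nu$. Combined with the analog of \cref{T:h via Tate} that expresses $h_{s/w}(X;\kk)$ as the extremal degree where the topological localization map is non-zero or surjective, the same substitution $q/2 = k - n$ closes the argument.

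The main obstacle is setting up the homological counterpart of \cref{T:Lidman-Manolescu cohomology} and carefully matching the roles of $\HMfrom$, $\HMto$, and $\HMbar$ with the Borel, coBorel, and Tate homology theories while tracking the unit degree shift built into the spectrum-level representation $F_{n+1}X \cong \BH_n(X;\kk)$. Once this bookkeeping is in place, both halves of the corollary are immediate consequences of the single substitution $q/2 = k - n$.
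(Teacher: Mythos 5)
Your treatment of the cohomological half coincides with the paper's: both run the commutative square of \cref{T:Lidman-Manolescu cohomology} through the Tate reformulation in \cref{T:h via Tate}, and the degree bookkeeping $q=2(k-n)$ is exactly the intended one.

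For the homological half your route diverges from the paper's, and as written it has a gap that you yourself flag. You propose to build a homological companion to \cref{T:Lidman-Manolescu cohomology} that identifies $p_*\colon\HMfrom_{*+1}(Y)\to\HMbar_*(Y)$ with the coBorel-to-Tate map $\lambda_*\colon C_{*+\nu}X\to T_{*+\nu}X$. That identification is plausible --- Lidman and Manolescu do equate all three flavors --- but it is not a consequence of \cref{T:Lidman-Manolescu cohomology} as stated, which only furnishes the $\HMto^{*}\leftrightarrow\BC^{*}$ square; establishing it would require redoing the chain-level argument for $\HMfrom$ and coBorel homology. The paper instead derives the homological equality from the cohomological one by pure duality: \cref{T:monopole h-inv hom vs coh} gives $h_{s/w}(Y;\kk)=-h^{s/w}(-Y;\kk)$; Manolescu's gluing theorem represents $SWF(-Y)$ by $(X^*,\ell^*,n^*)$ with $X^*$ Spanier--Whitehead $V$--dual to $X$ for $V\cong\R^{\ell+\ell^*}\oplus\C^{n+n^*}$; applying the already-proved cohomological equality to $-Y$ and then \cref{T:top h-inv hom vs coh} gives $h^{s/w}(-Y;\kk)=(n+n^*)-h_{s/w}(X;\kk)-n^*=n-h_{s/w}(X;\kk)$, and negating yields the claim. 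The duality route is self-contained given the lemmas already proved, whereas your route requires first constructing the homological analogue of the Lidman--Manolescu square --- precisely the ``obstacle'' you identify. Either prove that square, or, more economically, switch to the paper's duality deduction.
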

\begin{proof}
By \cref{T:top h-inv hom vs coh,T:monopole h-inv hom vs coh} it suffices to prove the cohomological version which follows immediately from \cref{T:Lidman-Manolescu cohomology,T:h via Tate}.
\end{proof}

\subsection{Duality and additivity}
\label{ch:additivity monopoles}

\cref{T:comparing h-invariants cohomology} allows to translate results about the topological $h$--invariants into results about monopole $h$--invariants.
We first address the duality and additivity properties of the monopole $h$--invariants.

\begin{theorem}\label{T:duality}
Let~$Y$ be a closed, connected \spinc 3--manifold with~$b_1(Y)=0$.
If~$\kk$ is a principal ideal domain, then 
	\begin{equation}\label{eq:duality monopole strong}
	h_s(Y,\kk)+h_s(-Y,\kk)\le 0 \le h^s(Y,\kk)+h^s(-Y,\kk).
	\end{equation}
If~$\F$ is a field, then 
	\begin{equation}\label{eq:Kronecker duality monopole}
	h_\F(Y)=h^\F(Y)
	\end{equation}
and thus
	\begin{equation}\label{eq:duality monopole weak}
	h_\F(-Y)=-h_\F(Y) \AND
	h^\F(-Y)=-h^\F(Y).
	\end{equation}
\end{theorem}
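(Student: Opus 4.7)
The plan is to reduce both halves of \cref{T:duality} to the duality machinery already developed for the topological $h$--invariants in \cref{ch:additivity etc} and translate back to the monopole side via \cref{T:comparing h-invariants cohomology}. Fix representatives $(X,\ell,n)$ of $\SWF(Y)$ and $(X^*,\ell^*,n^*)$ of $\SWF(-Y)$. The geometric input from Manolescu's construction is that $X$ and $X^*$ are Spanier--Whitehead $V$--dual for some $\TT$--representation $V\cong\R^k\oplus\C^m$, together with the normalization $n+n^*=m$.

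I would first treat the field case. By \cref{T:comparing h-invariants cohomology} we have $h_\F(Y)=h_\F(X)-n$ and $h^\F(Y)=h^\F(X)-n$, and \cref{T:duality SWF type} gives $h_\F(X)=h^\F(X)$; combining these yields~\eqref{eq:Kronecker duality monopole}. For the duality of signs, I combine $h_\F(Y)=h^\F(Y)$ with the involution $h_\F(Y)=-h^\F(-Y)$ from \cref{T:monopole h-inv hom vs coh} to conclude $h^\F(-Y)=-h^\F(Y)$, and similarly for the homological version, establishing~\eqref{eq:duality monopole weak}.

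For the PID inequalities, apply \cref{T:comparing h-invariants cohomology} to both $Y$ and $-Y$ to obtain
\begin{equation*}
h^s(Y;\kk)+h^s(-Y;\kk) = \bigl(h^s(X;\kk)+h^s(X^*;\kk)\bigr) - (n+n^*),
\end{equation*}
and analogously for $h_s$. The field case applied to, say, $\F=\Q$, together with $h^\Q(X)+h^\Q(X^*)=m$ from \cref{T:duality top}, forces $n+n^*=m$ as a byproduct. Inserting the PID inequalities $h_s(X;\kk)+h_s(X^*;\kk)\le m \le h^s(X;\kk)+h^s(X^*;\kk)$ from \cref{T:duality top} then produces~\eqref{eq:duality monopole strong}.

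The argument is essentially formal once the topological duality results are in place; the only genuine geometric ingredient is the identification of $\SWF(-Y)$ with a $V$--dual representative of $\SWF(Y)$, which underlies \cref{T:comparing h-invariants intro} and is built into Manolescu's construction. The main bookkeeping obstacle is keeping track of the desuspension parameters, which is handled by the relation $n+n^*=m$; this can either be imported as a known property of $\SWF$ under orientation reversal or, as sketched above, be extracted a posteriori from the field case.
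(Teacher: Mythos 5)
Your proof is correct and takes essentially the same route as the paper: both reduce the statement to the duality results for topological $h$--invariants (\cref{T:duality SWF type}, \cref{T:duality top}) and translate via \cref{T:comparing h-invariants cohomology}, with the key geometric input being that $\SWF(Y)$ and $\SWF(-Y)$ admit Spanier--Whitehead $V$--dual representatives (from Manolescu's gluing theorem). The one small variation: the paper imports the normalization $V\cong\R^{\ell+\ell^*}\oplus\C^{n+n^*}$ directly from~\cite{Manolescu_gluing_theorem_2007}, whereas you re-derive $n+n^*=m$ a posteriori by combining the purely algebraic field-coefficient identity $h^\Q(Y)+h^\Q(-Y)=0$ (from \cref{T:monopole h-inv hom vs coh} plus \cref{T:duality SWF type}) with $h^\Q(X)+h^\Q(X^*)=m$ from \cref{T:duality top} — a neat observation, though it still rests on the same geometric fact that the representatives are $V$--dual. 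You also obtain~\eqref{eq:duality monopole weak} directly from \cref{T:monopole h-inv hom vs coh} rather than sandwiching between the strong inequalities as the paper does; both are fine.
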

\begin{proof}
The equality in~\eqref{eq:Kronecker duality monopole} follows from~\cref{T:comparing h-invariants cohomology} and~\cref{T:duality SWF type} and together with~\eqref{eq:duality monopole strong} implies~\eqref{eq:duality monopole weak}.
By \cref{T:monopole h-inv hom vs coh} it suffices to prove the inequality~$h^s(Y,\kk)+h^s(-Y,\kk)\ge0$ in~\eqref{eq:duality monopole strong}.
It is proved in~\cite{Manolescu_gluing_theorem_2007} that $SWF(Y)$ and~$SWF(-Y)$ can be represented by triples $(X,\ell,n)$ and~$(X^*,\ell^*,n^*)$, respectively, such that~$X$ and~$X^*$ are $V$--dual with $V\cong \R^{\ell+\ell^*}\oplus\C^{n+n^*}$.
The inequality then follows from \cref{T:duality SWF type} and \cref{T:duality top} using \cref{T:comparing h-invariants cohomology}.
\end{proof}

\begin{theorem}\label{T:additivity}
Let~$Y$ and~$Y'$ be closed, connected \spinc 3--manifolds with $b_1(Y)=b_1(Y')=0$.
If~$\F$ is a field, then
	\begin{equation}\label{eq:additivity field hom}
	h_\F(Y\#Y')=h_\F(Y)+h_\F(Y)
	\end{equation}
If~$\kk$ is a principal ideal domain, we have inequalities
	\begin{equation}\label{eq:additivity mon strong}\begin{split}
	&h_s(Y\#Y';\kk)\ge h_s(Y;\kk)+h_s(Y';\kk) \AND\\
	&h^s(Y\#Y';\kk)\le h^s(Y;\kk)+h^s(Y';\kk)
	\end{split}\end{equation}
\end{theorem}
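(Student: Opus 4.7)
The plan is to reduce additivity for monopole $h$--invariants to the analogous statement for the topological $h$--invariants of the underlying $\TT$--spaces of type SWF, and then invoke \cref{T:additivity top}. The bridge is Manolescu's gluing theorem~\cite{Manolescu_gluing_theorem_2007}, already invoked in the proof of \cref{T:duality}: if $\SWF(Y)$ and $\SWF(Y')$ are represented by $(X,\ell,n)$ and $(X',\ell',n')$, then $\SWF(Y\# Y')$ is represented by $(X\wedge X',\ell+\ell',n+n')$, with exact additivity of both the fixed--point dimension and the rational index.

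Granting this, applying \cref{T:comparing h-invariants cohomology} to $Y$, $Y'$, and $Y\# Y'$ individually and subtracting yields
\[
h^{s/w}(Y\# Y';\kk) - h^{s/w}(Y;\kk) - h^{s/w}(Y';\kk)
= h^{s/w}(X\wedge X';\kk) - h^{s/w}(X;\kk) - h^{s/w}(X';\kk),
\]
so that every additivity/sub-additivity statement for monopole $h$--invariants is equivalent to the corresponding statement for topological $h$--invariants of $X\wedge X'$. The field identity~\eqref{eq:additivity field hom} then follows from $h^\F(X\wedge X')=h^\F(X)+h^\F(X')$ in \cref{T:additivity top} together with Kronecker duality $h_\F=h^\F$ from \cref{T:duality SWF type}. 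The cohomological inequality in~\eqref{eq:additivity mon strong} transfers directly from the sub-additivity $h^s(X\wedge X';\kk)\le h^s(X;\kk)+h^s(X';\kk)$.

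For the homological strong inequality, I would pass to orientation reversals. Since $-(Y\# Y')\cong(-Y)\#(-Y')$ as \spinc manifolds, \cref{T:monopole h-inv hom vs coh} converts the cohomological inequality just established for the reversed manifolds into
\[
-h_s(Y\# Y';\kk)
= h^s\big((-Y)\#(-Y');\kk\big)
\le h^s(-Y;\kk)+h^s(-Y';\kk)
= -h_s(Y;\kk)-h_s(Y';\kk),
\]
which rearranges to the first line of~\eqref{eq:additivity mon strong}.

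The main obstacle is the first step: one has to carefully verify (or precisely cite) that Manolescu's gluing theorem really produces the representative $(X\wedge X',\ell+\ell',n+n')$ in the form needed, with \emph{exact} additivity of the non-integer index $n$ rather than a shifted version depending on the metrics or the connected-sum construction. Once this is settled, the remaining steps are essentially formal, relying only on \cref{T:comparing h-invariants cohomology,T:additivity top,T:duality SWF type,T:monopole h-inv hom vs coh} already established in the excerpt.
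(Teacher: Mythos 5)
Your proposal is correct and follows essentially the same route as the paper: reduce via the connected-sum gluing theorem (the paper cites \cite{DaiSasahiraStoffregen_lattice_vs_SWF_arxiv_v1_2023}*{Thm.~3.19} rather than \cite{Manolescu_gluing_theorem_2007}, but the content is the same and the latter is the original source) to the topological additivity of \cref{T:additivity top} and \cref{T:comparing h-invariants cohomology}, then transfer the cohomological sub-additivity to the homological super-additivity by orientation reversal via \cref{T:monopole h-inv hom vs coh}. Your caveat about exact additivity of the non-integer index~$n$ is legitimate but is precisely what the cited gluing theorem provides, so there is no gap.
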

\begin{proof}
Again by \cref{T:monopole h-inv hom vs coh}, it suffices to prove the cohomological statement in~\eqref{eq:additivity mon strong}.
Moreover, since we established~$h_\F=h^\F$ in \cref{T:duality}, it also suffices to prove the cohomological analogue of \eqref{eq:additivity field hom}.

Now, if~$SWF(Y)$ and~$SWF(Y')$ are represented by~$(X,\ell,n)$ and~$(X',\ell,n')$, respectively, then $SWF(Y\#Y')$ is represented by~$(X\wedge X',\ell+\ell',n+n')$ according to~\cite{DaiSasahiraStoffregen_lattice_vs_SWF_arxiv_v1_2023}*{Thm.~3.19}.
The claims now follow from \cref{T:additivity top} and \cref{T:comparing h-invariants cohomology}.
\end{proof}
Based on \cref{T:additivity top}, we expect the inequalities in \eqref{eq:additivity mon strong} to be strict.
However, we are not aware of any examples with strict inequality.

\subsection{\Froyshov inequalities and monotonicity}
\label{ch:Froyshov}

Every closed \spinc 3--manifold~$Y$ bounds some compact \spinc 4--manifold~$W$ which we may assume to be connected.
We refer to such a~$W$ as a \emph{\spinc filling} of~$Y$.
Assuming that~$b_1(Y)=0$, the cup product and the orientation of~$W$ give rise to a non-degenerate rational valued pairing on~$H^2(W;\Q)\cong H^2(W,Y;\Q)$.
Let~$b_2^\pm(W)$ be the maximal dimension of positive and negative subspaces \wrt that pairing and~$\sigma(W)=b_2^+(W)-b_2^-(W)$ its signature.
The metric and reference connection on~$Y$ maybe extended to~$W$.
The associated \spinc Dirac operator~$D^+\colon\Gamma(S_W^+)\to\Gamma(S_W^-)$ with Atiyah--Patodi--Singer boundary conditions is a Fredholm operator and, according to~\cite{Manolescu_SWF_spectra_2003}*{p.~909}, its index can be expressed as 
	\begin{equation}\label{eq:APS index}
	\ind D^+ = \frac18\big(c_1(S_W^+)^2-\sigma(W)\big) + n(Y).
	\end{equation}
The monopole $h$--invariants of~$Y$ are known to put constraints on the possible fillings~$W$ with~$b_2^+(W)=0$ (cf.~\cite{KronheimerMrowka_book_2007}*{Thm.~39.1.4}, \cite{BehrensGolla_twisted_2018}*{Ch.~5}).
We can derive the same constraints from \cref{T:monotonicity}.

\begin{theorem}\label{T:Froyshov main}
Let~$\kk$ be any principal ideal domain.
Then~$h_{s/w}(S^3;\kk)=0$ and if~$Y$ is a closed, connected 3--manifold with~$b_1(Y)$ and~$W$ is \spinc filling of~$Y$ with~$b_2^+(W)=0$, then
	\begin{equation}
	\frac18\big(c_1(S_W^+)^2+b_2(W)\big) \le h_{s/w}(Y;\kk).
	\end{equation}
The same statements hold with~$h^{s/w}(Y;\kk)$ in place of~$h_{s/w}(Y;\kk)$.
\end{theorem}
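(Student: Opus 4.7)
The plan is to reduce the theorem to the topological monotonicity of \cref{T:monotonicity} via the Lidman--Manolescu isomorphism of \cref{T:comparing h-invariants cohomology}, using Manolescu's relative Bauer--Furuta invariant~\cite{Manolescu_gluing_theorem_2007} associated to the cobordism $\bar W\colon S^3\to Y$ obtained from $W$ by removing an open 4--ball.

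\textbf{The sphere case.} The Seiberg--Witten--Floer homotopy type $\SWF(S^3)$ is represented by $(S^0,0,0)$. Since $S^0$ coincides with its $\TT$--fixed point set, the map $i_{S^0}^*$ is the identity and $h^{s/w}(S^0;\kk)=0$. \cref{T:comparing h-invariants cohomology} then gives $h^{s/w}(S^3;\kk)=0$, and the homological version follows from \cref{T:monopole h-inv hom vs coh}.

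\textbf{The main argument.} Let $\SWF(Y)$ be represented by $(X,\ell,n)$. After sufficient stabilization, Manolescu's construction yields a $\TT$--equivariant map
\begin{equation*}
f\colon S^{a,\,b+d}\longrightarrow X\wedge S^{a-\ell,\,b-n}
\end{equation*}
for arbitrarily large integers $a,b$, where $d=\tfrac{1}{8}\bigl(c_1^2(S_W^+)-\sigma(W)\bigr)$ is the topological part of the APS Dirac index~\eqref{eq:APS index} for $\bar W$ (the spectral contribution $n(Y)-n(S^3)$ is absorbed into the $n$--coordinate of the representative). Because $b_2^+(W)=0$, no additional $\R$--suspension appears, and the Seiberg--Witten map restricted to the $\TT$--fixed (reducible) locus is linear Fredholm, so after finite-dimensional approximation $f^\TT\colon S^a\to S^a$ is a linear isomorphism and in particular has degree $\pm 1$. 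Strong monotonicity (\cref{T:monotonicity}(ii)) together with the stability \cref{T:stability} then gives
\begin{equation*}
b+d = h^s(S^{a,b+d};\kk)\le h^s\bigl(X\wedge S^{a-\ell,\,b-n};\kk\bigr) = h^s(X;\kk)+b-n,
\end{equation*}
so $h^s(X;\kk)\ge n+d$, and therefore $h^s(Y;\kk)=h^s(X;\kk)-n\ge d=\tfrac{1}{8}\bigl(c_1^2(S_W^+)+b_2(W)\bigr)$, using $\sigma(W)=-b_2(W)$ since $b_2^+(W)=0$. The weak cohomological inequality follows from $h^w\le h^s$, and the homological versions are obtained either from the straightforward homological counterpart of \cref{T:monotonicity}---the same diagram chase with $f_*$ in place of $f^*$---or, for field coefficients, directly from Kronecker duality (\cref{T:duality SWF type}).

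\textbf{Main obstacle.} The chief technical hurdle is extracting $f$ with the precise shift $d$ and the $\pm 1$ fixed-point degree from Manolescu's relative Bauer--Furuta construction. Tracking how $n(Y)$ is absorbed into the $n$--coordinate of the representative is bookkeeping, but verifying $\deg(f^\TT)=\pm 1$ rests on the geometric observation that $b_2^+(W)=0$ forces the self-dual curvature component of the Seiberg--Witten map to vanish, reducing the fixed-point problem to the Fredholm index of the APS Dirac operator.
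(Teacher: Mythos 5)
Your overall strategy is the same as the paper's: represent $\SWF(Y)$ by a type--SWF triple, extract from Manolescu's Bauer--Furuta construction a $\TT$--map whose fixed--point restriction has degree $\pm1$ when $b_2^+(W)=0$, feed this into \cref{T:monotonicity}, and translate back via \cref{T:comparing h-invariants cohomology}. Using the relative Bauer--Furuta map for the punctured cobordism $\bar W\colon S^3\to Y$ from~\cite{Manolescu_gluing_theorem_2007} rather than the absolute one from~\cites{Manolescu_SWF_spectra_2003,Khandhawit_gauge_slice_2015} (which the paper uses, with source $S^{\ell-b_2^+(W),\,n+d(W)}$ and target $X$ directly) is a benign variant; after stabilization they carry the same data, and your explicit treatment of the $S^3$ normalization via $\SWF(S^3)=(S^0,0,0)$ is correct and actually spelled out more completely than in the paper.

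The one place you diverge substantively, and where your argument has a gap, is the homological inequality $\tfrac18(c_1^2+b_2)\le h_{s/w}(Y;\kk)$ for a general principal ideal domain $\kk$. You propose either (a) ``the same diagram chase with $f_*$ in place of $f^*$'' as a homological counterpart of \cref{T:monotonicity}, or (b) Kronecker duality via \cref{T:duality SWF type}. Option (b) only gives $h_\F=h^\F$ for fields, so it cannot cover the general PID case; for a PID Kronecker duality only yields the one--sided chain $h_s\le h_w=h^w\le h^s$, which goes the wrong way. Option (a) is not a literal translation of the proof of \cref{T:monotonicity}, because the homological invariants $h_{s/w}(X;\kk)$ in \cref{D:h-invariants top hom} are defined through the map $\lambda_*\colon C_*X\to T_*X$ rather than through the fixed--point inclusion $(i_X)_*$ on Borel homology, and there is no stated homological version of \cref{T:monotonicity} in the paper. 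The assertion can be repaired: by \cref{T:Localization}(ii) $T_*X\cong T_*X^\TT$ via $(i_X)_*$, so $\deg(f^\TT)=\pm1$ forces $f_*\colon T_*X\to T_*X'$ to be an isomorphism, and then the commutativity $\lambda_*^{X'}\circ f_*=f_*\circ\lambda_*^X$ does give the desired monotonicity by a surjectivity chase. But you would need to prove this as a separate lemma; the paper instead sidesteps it entirely by applying the already--proved cohomological monotonicity to the Spanier--Whitehead dual map $f_W^*\colon X^*\wedge S^{\ell,n+d(W)}\to S^{\ell^*,n^*}\wedge S^{\ell,n}$ (whose fixed--point degree is again $\pm1$ by \cref{T:SW duality for maps}), obtaining $h^{s/w}(-Y;\kk)\le -d(W)$ and then invoking $h_{s/w}(Y;\kk)=-h^{s/w}(-Y;\kk)$ from \cref{T:monopole h-inv hom vs coh}. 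That route stays entirely within the proved cohomological machinery.
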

\begin{proof}
According to~\cites{Manolescu_SWF_spectra_2003,Khandhawit_gauge_slice_2015}, a finite dimensional approximation to the Seiberg--Witten map on~$W$ gives rise to a $\TT$--map
	\begin{equation}
	f_W\colon S^{\ell-b_2^+(W),n+d(W)}\to X,\quad d(W)=\frac18\big(c_1^2(W)-\sigma(W)\big)
	\end{equation}
Moreover, if~$b_2^+(W)=0$, then $-\sigma(W)=b_2(W)$ and the fixed point map $f_W^\TT\colon S^\ell\to X^\TT$ is a homotopy equivalence (see~\cite{Manolescu_SWF_spectra_2003}*{Ch.~10}).
\cref{T:monotonicity} therefore gives 
	\begin{equation}
	n+d(W)=S^{\ell,n+d(W)} \le h^{s/w}(X;\kk)
	\end{equation}
and we may rewrite this using \cref{T:comparing h-invariants cohomology} as
	\begin{equation}
	\frac18\big(c_1(W)^2+b_2(W)\big) \le h^{s/w}(X;\kk)-n = h^{s/w}(Y).
	\end{equation}
If~$(X^*,\ell^*,n^*)$ represents~$SWF(-Y)$ and~$X^*$ is $(\R^{\ell+\ell*}\oplus\C^{n+n^*})$--dual to~$X$, then by \cref{T:SW duality for maps} the composition
	\begin{equation}
	f_W^*\colon X^*\wedge S^{\ell,n+d(W)} \xra{\id\wedge f_W} X^*\wedge X \xra{\epsilon} S^{\ell^*,n^*}\wedge S^{\ell,n}
	\end{equation}
also has the property that~$(f_W^*)^\TT$ is a homotopy equivalence.
As above, we get
	\begin{equation}
	h^{s/w}(X^*)+n+d(W) \le n+n^*
	\end{equation}
which is equivalent to
	\begin{equation}
	\frac18\big(c_1(W)^2+b_2(W)\big) \le -h^{s/w}(-Y;\kk)=h_{s/w}(Y;\kk)
	\end{equation}
using \cref{T:monopole h-inv hom vs coh} in the last step.
\end{proof}

We now pass to coefficients in the prime fields~$\F_p$.
The monotonicity property in~\cref{T:additivity etc intro} is now a formal consequence.
\begin{corollary}\label{T:monotonicity mon}
Let~$W$ be a 4--dimensional \spinc cobordism from~$Y$ to~$Y'$ with $b_1(Y)=b_1(Y')=0$ and~$b_2^+(W)=0$.
Then
	\begin{equation}
	h_p(Y) + \frac18\big(c_1(W)^2+b_2(W)\big) \le h_p(Y').
	\end{equation}
\end{corollary}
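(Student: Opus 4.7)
The plan is to reduce the monotonicity statement to the \Froyshov inequality (\cref{T:Froyshov main}) combined with the additivity and duality properties already established for prime field coefficients.

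First, I would convert the cobordism~$W$ into a filling of a single closed 3--manifold. Attach a $1$--handle to~$W$ connecting the boundary components~$-Y$ and~$Y'$; call the resulting compact \spinc 4--manifold~$\tilde W$. Then $\partial \tilde W \cong (-Y)\#Y'$, and since a 1--handle contributes nothing to~$H^2$ we have $b_2(\tilde W)=b_2(W)$, $b_2^+(\tilde W)=b_2^+(W)=0$, and $c_1(\tilde W)^2=c_1(W)^2$ (the \spinc structure extends over the handle uniquely up to the relevant equivalence since the handle is simply connected). Moreover, $b_1((-Y)\#Y')=b_1(Y)+b_1(Y')=0$, so the $h$--invariants are defined.

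Next, apply the \Froyshov inequality from~\cref{T:Froyshov main} (with coefficients in the prime field~$\F_p$) to the filling~$\tilde W$ of~$(-Y)\#Y'$:
	\begin{equation*}
	\tfrac18\bigl(c_1(\tilde W)^2+b_2(\tilde W)\bigr) \le h_p\bigl((-Y)\#Y'\bigr).
	\end{equation*}
Now I would invoke additivity (\cref{T:additivity}, \eqref{eq:additivity field hom}) and duality (\cref{T:duality}, \eqref{eq:duality monopole weak}) for field coefficients to simplify the right--hand side:
	\begin{equation*}
	h_p\bigl((-Y)\#Y'\bigr) = h_p(-Y)+h_p(Y') = -h_p(Y)+h_p(Y').
	\end{equation*}
Substituting and using the equalities for~$c_1^2$ and~$b_2$ yields the desired inequality after rearrangement.

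The only potentially delicate step is the 1--handle attachment: one has to verify that the \spinc structure on~$W$ extends uniquely over the handle and that this extension realizes the connected sum of the \spinc boundaries~$-Y$ and~$Y'$. This is standard, and since the handle is simply connected with trivial~$H^2$ it has no effect on~$c_1^2$, $b_2$, or~$b_2^+$; alternatively, one could bypass the handle entirely by working with disconnected fillings, but the connected--sum formulation is convenient because~\cref{T:additivity} is stated for connected summands. Given these standard verifications, no genuine analytic obstacle remains: the deep input (the Bauer--Furuta--type argument) has already been absorbed into~\cref{T:Froyshov main}, and the rest is formal algebra.
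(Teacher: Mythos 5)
Your proof is correct and follows essentially the same route as the paper: turn the cobordism into a \spinc filling of $(-Y)\#Y'$ with unchanged $b_2$, $b_2^\pm$, $c_1^2$, then combine \cref{T:Froyshov main} with the additivity and duality results. The only (immaterial) difference is the surgery used to produce the filling: you attach a 1--handle to $W$ joining the two boundary components, while the paper instead removes a tubular neighborhood of an embedded arc from $Y$ to $Y'$ inside $W$; since a 1--handle (added or deleted) does not affect $H_2$, the intersection form, or $c_1^2$, both constructions work equally well.
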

\begin{proof}
Consider~$W$ as a \spinc filling of~$-Y\amalg Y'$.
By removing an open neighborhood of an embedded arc from~$Y$ to~$Y'$ we obtain a \spinc filling~$W^\circ$ of~$(-Y)\#Y'$ with the same $b_2^\pm$ and~$c_1^2$ as~$W$.
Combining \cref{T:Froyshov main,T:additivity,T:duality} gives
	\begin{equation*}
	\frac18\big(c_1(W)^2+b_2(W)\big) \le h_p((-Y)\#Y') = -h_p(Y)+h_p(Y').\qedhere
	\end{equation*}
\end{proof}

\subsection{Dependence on the choice of coefficients}

Lastly, we address \cref{T:coeff dep intro}.
We prove a more general statement involving both the homological and cohomological $h$--invariants.

\begin{theorem}\label{T:coeff dep monopoles}
Let~$Y$ be a closed, connected \spinc 3--manifold with~$b_1(Y)=0$ and~$\kk$ a principal ideal domain of characteristic~$p$.
Then we have equalities
	\begin{equation}\label{eq:weak monopole relations}
	h_w(Y;\kk) = 
	h_p(Y) = 
	h^p(Y) = 
	h^w(Y;\kk).
	\end{equation}
and inequalities
	\begin{equation}\label{eq:strong monopole relations k}
	h_s(Y;\Z)\le h_s(Y;\kk) \le h^s(Y;\kk)\le h^s(Y;\Z)
	\end{equation}
as well as
	\begin{equation}\label{eq:strong monopole relations Z}
	h_s(Y;\Z) = \min_p h_p(Y) \le \max_p h_p(Y)=h^s(Y;\Z).
	\end{equation}
\end{theorem}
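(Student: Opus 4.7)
The strategy is to translate everything to topological $h$--invariants via \cref{T:comparing h-invariants cohomology} and then string together the results already established in \cref{ch:topological h-invariants,ch:additivity etc}. Fix a representative $(X,\ell,n)$ for $\SWF(Y)$, so that
	\begin{equation*}
	h^{s/w}(Y;\kk) = h^{s/w}(X;\kk) - n \AND h_{s/w}(Y;\kk) = h_{s/w}(X;\kk) - n
	\end{equation*}
for every principal ideal domain $\kk$. Each assertion of the theorem will thus reduce to an equality or inequality between topological $h$--invariants of $X$, after which the $(-n)$ shift can be absorbed uniformly.

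For the chain of equalities~\eqref{eq:weak monopole relations}, let $\F$ denote the field of fractions of $\kk$, which has characteristic $p$. \cref{T:duality SWF type} (Kronecker duality) supplies the equality $h_w(X;\kk)=h^w(X;\kk)$, while \cref{T:basic inequalities h-invariant}(i) gives $h^w(X;\kk)=h^\F(X)$ and part~(ii) further identifies $h^\F(X)=h^p(X)$. Shifting by $-n$ delivers the four-way equality.

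For~\eqref{eq:strong monopole relations k}, \cref{T:duality SWF type} already gives $h_s(X;\kk)\le h_w(X;\kk)=h^w(X;\kk)\le h^s(X;\kk)$, and \cref{T:basic inequalities h-invariant}(iii) yields the upper bound $h^s(X;\kk)\le h^s(X;\Z)$. The step that does not follow formally is the lower bound $h_s(X;\Z)\le h_s(X;\kk)$; this is the main obstacle, since the underlying universal coefficient argument used in the cohomological case does not dualize directly. To handle it I would invoke Spanier--Whitehead duality: by \cref{T:type SWF duals} there exists a $\TT$--space $X^*$ of type~SWF which is $V$--dual to $X$ with $V\cong\R^k\oplus\C^m$, and \cref{T:top h-inv hom vs coh} (valid for every commutative ring) converts the cohomological inequality for $X^*$ into the desired homological inequality for $X$ via
	\begin{equation*}
	h_s(X;\kk) \;=\; m-h^s(X^*;\kk) \;\ge\; m-h^s(X^*;\Z) \;=\; h_s(X;\Z).
	\end{equation*}
Shifting by $-n$ gives~\eqref{eq:strong monopole relations k}.

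Finally, \eqref{eq:strong monopole relations Z} splits into two identifications. The equality $h^s(Y;\Z)=\max_p h^p(Y)$ is immediate from \cref{T:basic inequalities h-invariant}(iv) after shifting by $-n$, since maxima commute with the affine shift. For $h_s(Y;\Z)=\min_p h_p(Y)$ I would again pass to the dual $X^*$: combining $h_p(X)=m-h^p(X^*)$ (\cref{T:top h-inv hom vs coh} applied to the prime field $\F_p$) with $h^s(X^*;\Z)=\max_p h^p(X^*)$ (\cref{T:basic inequalities h-invariant}(iv)) and once more $h^s(X^*;\Z)=m-h_s(X;\Z)$ (\cref{T:top h-inv hom vs coh} for $\Z$) yields $\min_p h_p(X)=h_s(X;\Z)$. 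Shifting by $-n$ completes the proof.
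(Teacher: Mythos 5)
Your proof is correct and follows essentially the same route as the paper: reduce to topological $h$--invariants of a representative $(X,\ell,n)$ via \cref{T:comparing h-invariants cohomology}, apply \cref{T:basic inequalities h-invariant} for the cohomological statements, and use duality for the rest. The one minor difference is how you reach the purely homological inequalities $h_s(X;\Z)\le h_s(X;\kk)$ and $h_s(X;\Z)=\min_p h_p(X)$: you pass to an abstract Spanier--Whitehead dual $X^*$ of $X$ via \cref{T:type SWF duals} and \cref{T:top h-inv hom vs coh}, whereas the paper phrases this step as an appeal to \cref{T:monopole h-inv hom vs coh}, i.e.\ it passes to $-Y$ and applies the cohomological version of \cref{T:comparing h-invariants cohomology} to a representative of $\SWF(-Y)$. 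Since the homological half of \cref{T:comparing h-invariants cohomology} is itself deduced using \cref{T:top h-inv hom vs coh}, your argument is just inlining that duality one step earlier; the two routes are interchangeable and neither buys anything extra.
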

\begin{proof}
In the light of \cref{T:comparing h-invariants cohomology}, the inequalities and equalities involving only the cohomological $h$--invariants are immediate from \cref{T:basic inequalities h-invariant}.
The parts involving only the homological parts then follow from \cref{T:monopole h-inv hom vs coh}.
The relations between the homological and cohomological $h$--invariants follows from \cref{T:duality SWF type}.
\end{proof}
Based on \cref{eg:example for h-difference}, we expect that the invariants~$h_p(Y)$ generally depend on the choice of~$p$, but we are unaware of any examples that exhibit such dependence.

\subsection{Comparison with previous definitions}
\label{ch:precursors}
As mentioned in the introduction, the $h$--invariants discussed here have many precursors dating back to \Froyshov's article~\cite{Froyshov_SW_boundary_1996} from 1996.

\begin{enumerate}[(1)]

\item 
In \cite{Manolescu_triangulaion_JAMS_2016}*{Ch.~2.6} Manolescu defines invariants~$d_p(X)$ of spaces of type SWF which are related to ours by $d_p(X) = \ell+2h^p(X)$ by \cref{T:h via Tate}.
Later in~\cite{Manolescu_triangulaion_JAMS_2016}*{Ch.~3.7} he defines invariants~$\delta_p(Y)$ of 3--manifolds which agree with~$h^p(Y)$ via \cref{T:comparing h-invariants cohomology} and thus with~$h_p(Y)$ by~\cref{T:duality}.

\item 
The invariant~$Fr(Y)$ defined in~\cite{KronheimerMrowkaOzsvathSzabo_lens_space_surgeries_2007}*{Def.~2.11} is essentially a special case of \cref{D:monopole h-invariants}. 
After conventions for normalization, we have
$$Fr(Y)=2h^w(Y;\F_2)=2h_2(Y).$$

\item 
Similar, the invariant~$h(Y)$ defined in~\cite{KronheimerMrowka_book_2007}*{Def.~39.1.1} turns out to be
	\begin{equation}
	h(Y)= -h^s(Y;\R)=-h_0(Y).
	\end{equation}

\item\label{correction terms}
The \emph{correction terms} $d(Y)$ from~\cite{OzsvathSzabo_4mfs_gradings_2003}*{Def.~4.1} corresponds to
	\begin{equation}
	d(Y)=2h^w(Y;\Z)=2h_0(Y).
	\end{equation}
This follows from the isomorphisms between Heegaard--Floer and monopole Floer homology constructed in~\cites{KutluhanLeeTaubes_HM_vs_HF_I-V_2020,ColinGhigginiHonda_HF_vs_ECH_announcement_2011}.

\item
Similarly, the generalized correction terms~$\underline{d}_p(Y)$ from~\cite{BehrensGolla_twisted_2018}*{Def.~3.1} specialize to~$2h_p(Y)$ when~$b_1(Y)=0$.

\item 
In~\cite{Froyshov_monopole_homology_2010} \Froyshov defines invariants similar to~$h^\F(Y)$ where~$\F$ is a field (cf.~\cite{Froyshov_monopole_homology_2010}*{Thms.~1~\&~2}) and notes that his invariants depend only on the characteristic of~$\F$ (see \cite{Froyshov_monopole_homology_2010}*{p.~569}).
Since he uses an alternative construction of monopole Floer cohomology, it is not entirely clear how his $h$--invariants relate to the ones considered in this paper.
However, the inequality in \cite{Froyshov_monopole_homology_2010}*{Thm.~4} suggests that his $h$--invariants agree with~$-h_p(Y)$.

\item 
The earliest precursor is an invariant~$\gamma(Y)$ defined by \Froyshov in~\cite{Froyshov_SW_boundary_1996}*{Ch.~3}.
Since the definition predates that of monopole Floer homology, the relation to the $h$--invariants is not entirely clear. 
However, if~$Y$ admits a metric of positive scalar curvature, then one can show that~$SWF(Y)$ is represented by~$(S^0,0,n(Y))$ and~$\gamma(Y)=8n(Y)=8h_p(Y)$ for all~$p$.

\item 
Lastly, $h^s(Y;\Z)$ was implicitly studied by Manolescu in~\cite{Manolescu_SWF_spectra_2003}.
More explicitly, using the notation surrounding~\cite{Manolescu_SWF_spectra_2003}*{eq.~(20), p.~927} one can prove that $h^s(Y;\Z)=\inf_{g,\nu}\big(-n(Y,c,g)+\min\Set{r\in\Z_+}{\gamma_r=0}\big)$.
We plan to get back to this in a forthcoming article.
\end{enumerate}


\bibliography{sbehrens}

\end{document}